\newcommand{\Hom}{\operatorname{Hom}}
\newcommand{\Spec}{\operatorname{Spec}}
\newcommand{\length}{\ell}
\newcommand{\m}{\frak{m}}
\DeclareMathOperator{\pa}{\text{par}}
\DeclareMathOperator{\fp}{\mathfrak{p}}
\DeclareMathOperator{\fq}{\mathfrak{q}}
\DeclareMathOperator{\n}{\mathfrak{n}}
\DeclareMathOperator{\Ann}{Ann}
\numberwithin{equation}{section}
\begin{document}

\title[A Buchsbaum theory for tight closure]{A Buchsbaum theory for tight closure}

\author[Linquan Ma]{Linquan Ma}
\address{Department of Mathematics, Purdue University, West Lafayette, IN 47907, USA}
\email{ma326@purdue.edu}

\author[Pham Hung Quy]{Pham Hung Quy}
\address{Department of Mathematics, FPT University, Hanoi, VIETNAM}
\email{quyph@fe.edu.vn}

\thanks
{LM is partially supported by NSF Grant DMS \#1901672, NSF FRG Grant \#1952366, and a fellowship from the Sloan Foundation. PHQ is partially supported by Vietnam Academy of Science and Technology under grant number CNXS02.01/22-23. This article has been written during many visits of the second author to Vietnam Institute for Advanced Studies in Mathematics. He thanks sincerely the institute for their hospitality and valuable supports. The authors are grateful to Sarasij Maitra, Thomas Polstra, Jugal Verma, and the referee for several useful comments.}

\keywords{tight closure, system of parameters, parameter test ideal, Buchsbaum ring, generalized Cohen-Macaulay ring.}
\subjclass[2010]{Primary 13A35; Secondary 13H10}

\maketitle

\begin{abstract}
A Noetherian local ring $(R,\m)$ is called Buchsbaum if the difference $\length(R/\fq)-e(\fq, R)$, where $\fq$ is an ideal generated by a system of parameters, is a constant independent of $\fq$. In this article, we study the tight closure analog of this condition. We prove that in an unmixed excellent local ring $(R,\m)$ of prime characteristic $p>0$ and dimension at least one, the difference $e(\fq, R)-\length(R/\fq^*)$ is independent of $\fq$ if and only if the parameter test ideal $\tau_{\pa}(R)$ contains $\m$. We also provide a characterization of this condition via derived category which is analogous to Schenzel's criterion for Buchsbaum rings. 
\end{abstract}


\section{Introduction}

Recall that in a Noetherian local ring $(R,\m)$, if $\fq\subseteq R$ is an ideal generated by a system of parameters, then the length $\length(R/\fq)$ is always greater than or equal to the Hilbert-Samuel multiplicity $e(\fq, R)$. Moreover, $R$ is Cohen-Macaulay if and only if $\length(R/\fq)=e(\fq, R)$ for one (or equivalently, all) such $\fq$. In general, the difference $\length(R/\fq) -e(\fq, R)$ encodes interesting homological properties of the ring $R$. For instance, it is known that under mild assumptions, $\length(R/\fq) -e(\fq, R)$ is uniformly bounded above for all parameter ideals $\fq\subseteq R$ if and only if $R$ is Cohen-Macaulay on the punctured spectrum. A more interesting and subtle condition is that the difference $\length(R/\fq) -e(\fq, R)$ does not depend on $\fq$. Rings that satisfy this latter condition are called {\it Buchsbaum}, and they have been studied extensively, see \cite{StuckradVogel73,StuckradVogelTowardsBuchsbaumSingularities,StuckradVogelBuchsbaumRingsBook,SchenzelApplicationsofDualizingComplex,GotoOntheAssociatedGradedRingsofBuchsbaumRings,GotoYamagishi}, among many others. We briefly summarize the classical theory into the following theorem (see Section 2 and 3 for unexplained terminology):

\begin{theorem}
\label{thm: Buchsbaum theory}
Let $(R,\m, k)$ be a Noetherian local ring of dimension $d$. Let $\fq$ denote an ideal generated by a system of parameters. Then the following conditions are equivalent:
\begin{enumerate}[(1)]
  \item The difference $\length(R/\fq)-e(\fq, R)$ is independent of $\fq$ (i.e., $R$ is Buchsbaum).
  \item $\length(\fq^{\lim}/\fq)$ is independent of $\fq$.\footnote{This characterization of Buchsbaum ring is well-known to experts but we cannot find an explicit reference in the literature, thus we include a short explanation in Remark \ref{rmk: limit closure Buchsbaum}. }
  \item For every system of parameters $x_1,\dots,x_d$, we have $\m\cdot ((x_1,\dots,x_i):x_{i+1})\subseteq (x_1,\dots,x_i)$ for every $i$.
  \item The truncation $\tau^{<d}\mathbf{R}\Gamma_\m R$ is quasi-isomorphic to a complex of $k$-vector spaces.
\end{enumerate}
\end{theorem}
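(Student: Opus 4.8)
The plan is to take condition (3) as a hub and establish $(1)\Leftrightarrow(3)$, $(3)\Leftrightarrow(4)$, and $(2)\Leftrightarrow(3)$. It is convenient to first rewrite (3): since $x_{i+1}\in\m$ we always have $(x_1,\dots,x_i):\m\subseteq(x_1,\dots,x_i):x_{i+1}$, so (3) is exactly the assertion that $(x_1,\dots,x_i):x_{i+1}=(x_1,\dots,x_i):\m$ for every system of parameters and every $i$. Note also that (1) is by definition the statement that $R$ is Buchsbaum, that Buchsbaum rings are generalized Cohen--Macaulay (indeed $\m H^i_\m(R)=0$ for $i<d$), and that generalized Cohen--Macaulayness --- $\length(H^i_\m(R))<\infty$ for $i<d$ --- is built into conditions (2),(3),(4); so throughout one may eventually assume $R$ is generalized Cohen--Macaulay.

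For $(1)\Leftrightarrow(3)$ I would induct on $d=\dim R$, the case $d\le1$ being a direct computation (in dimension one, for $\fq=(x)$ both $\length(R/\fq)-e(\fq,R)$ and $\length(\fq^{\lim}/\fq)$ equal $\length(0:_Rx)$, and (3) reads $\m\cdot(0:_Rx)=0$). For the step, choose a system of parameters $x_1,\dots,x_d$, pass to $\overline R=R/x_1R$ with its system of parameters $x_2,\dots,x_d$, and compare $\length(R/\fq)-e(\fq,R)$ with $\length(\overline R/\overline\fq)-e(\overline\fq,\overline R)$ --- equivalently, compare partial Euler characteristics of the relevant Koszul homology --- via the exact sequences arising from $0\to(0:_Rx_1)\to R\xrightarrow{x_1}R\to\overline R\to0$. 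The correction terms involve colon ideals of subsystems of parameters together with the lengths $\length(H^j_\m(R))$, and the reformulated (3) makes them collapse to $\sum_{i=0}^{d-1}\binom{d-1}{i}\length(H^i_\m(R))$, manifestly independent of $\fq$; conversely independence of the difference forces these colon ideals to be controlled as required, yielding (3). This is classical, and I would cite \cite{StuckradVogelBuchsbaumRingsBook} for the bookkeeping. For $(3)\Leftrightarrow(4)$ I would invoke Schenzel's criterion: for a system of parameters $\underline x$ the \v{C}ech complex $\check C^\bullet(\underline x)$ computes $\mathbf{R}\Gamma_\m R$ and receives a map from the Koszul cochain complex $K^\bullet(\underline x;R)$, and (4) unpacks into (a) $\m H^i_\m(R)=0$ for $i<d$, and (b) $\tau^{<d}\mathbf{R}\Gamma_\m R\simeq\bigoplus_{i<d}H^i_\m(R)[-i]$. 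Applying (3) to all powers $x_1^t,\dots,x_d^t$ and passing to the direct limit gives (a); (b) comes from assembling, compatibly in $t$, splittings of the truncated Koszul complexes on $x_1^t,\dots,x_d^t$ that the colon conditions supply; conversely a splitting as in (4) shows each multiplication $R/(x_1,\dots,x_i)\xrightarrow{x_{i+1}}R/(x_1,\dots,x_i)$ has kernel killed by $\m$, which is (3). I would cite \cite{SchenzelApplicationsofDualizingComplex} for the precise statement.

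For $(2)\Leftrightarrow(3)$ --- the characterization the footnote flags --- I would use $\fq^{\lim}/\fq=\ker\big(R/\fq\to H^d_\m(R)\big)$ together with the reduction to the generalized Cohen--Macaulay case (boundedness of $\length(\fq^{\lim}/\fq)$ over all $\fq$ is enough to carry this out). For $(3)\Rightarrow(2)$: in a Buchsbaum ring every system of parameters is standard, and the structure theory of standard systems of parameters computes $\length(\fq^{\lim}/\fq)$ as a fixed expression in the $\length(H^i_\m(R))$, independent of $\fq$. One must be careful not to identify this constant with $\length(R/\fq)-e(\fq,R)$: the two genuinely differ in general. For instance, for $R=k[[a,b,c,d]]/\big((a,b)\cap(c,d)\big)$, the Buchsbaum ring given by two $2$-planes meeting at a point, one checks for $\fq=(a+c,\,b+d)$ that $\fq^{\lim}=\m$, so $\length(\fq^{\lim}/\fq)=2$, while $\length(R/\fq)-e(\fq,R)=3-2=1$. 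For the converse $(2)\Rightarrow(3)$: comparing $\fq$ with the parameter ideals $(x_1^t,\dots,x_d^t)$ inside the colon description $\fq^{\lim}=\bigcup_t\big((x_1^{t+1},\dots,x_d^{t+1}):(x_1\cdots x_d)^t\big)$, constancy of $\length(\fq^{\lim}/\fq)$ across this family pins the colon ideals down and forces $(x_1,\dots,x_i):x_{i+1}=(x_1,\dots,x_i):\m$, i.e.\ (3).

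The step I expect to be the main obstacle is the passage from ``finite level'' to ``limit'' that recurs in all three equivalences: in $(1)\Leftrightarrow(3)$, controlling the length corrections uniformly in $\fq$ when the parameter one kills is a zerodivisor, so that all of $H^0_\m(R),\dots,H^{d-1}_\m(R)$ intrude; in $(3)\Leftrightarrow(4)$, upgrading the family of level-$t$ splittings of truncated Koszul complexes to a single decomposition of $\tau^{<d}\mathbf{R}\Gamma_\m R$ in the derived category; and in $(2)\Leftrightarrow(3)$, pinning $\fq^{\lim}$ down sharply enough --- it genuinely varies with $\fq$ even when its colength over $\fq$ is constant --- to read off the colon conditions. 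The rest is routine manipulation of Koszul complexes and partial Euler characteristics.
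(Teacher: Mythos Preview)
The paper does not prove $(1)\Leftrightarrow(3)$ or $(1)\Leftrightarrow(4)$ at all; these are recorded as classical facts with references to St\"uckrad--Vogel and to Schenzel (Theorem~\ref{thm: schenzel}), exactly as you propose to handle them. The only implication the paper actually argues is $(1)\Leftrightarrow(2)$, in Remark~\ref{rmk: limit closure Buchsbaum}, and there your proposal and the paper part ways.

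Your direction $(3)\Rightarrow(2)$ is fine and matches the paper's $(1)\Rightarrow(2)$: every system of parameters is standard, and Lemma~\ref{lem: length of q^lim/q} then gives $\length(\fq^{\lim}/\fq)=\sum_{i=0}^{d-1}\binom{d}{i}\length(H^i_\m(R))$, a constant. (Your example of the two planes is correct and illustrates that this constant is $\sum\binom{d}{i}\length(H^i_\m(R))$, not $\sum\binom{d-1}{i}\length(H^i_\m(R))$.)

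Your direction $(2)\Rightarrow(3)$ is where there is a genuine gap. You propose to compare $\fq$ with its powers $(x_1^t,\dots,x_d^t)$ inside the colon description of $\fq^{\lim}$ and argue that constancy ``pins the colon ideals down''; you yourself flag this finite-to-limit passage as the main obstacle, and indeed nothing you have written makes it go through. The paper's argument avoids any direct attack on the colon ideals. The key input is Lemma~\ref{lem: length of q^lim/q}: for generalized Cohen--Macaulay $R$ one always has
\[
\length(\fq^{\lim}/\fq)\ \le\ \sum_{i=0}^{d-1}\binom{d}{i}\length(H^i_\m(R)),
\]
with equality \emph{if and only if} $\fq$ is standard. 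The paper then argues: writing $\length(\fq^{\lim}/\fq)=\big(\length(R/\fq)-e(\fq,R)\big)+\big(e(\fq,R)-\length(R/\fq^{\lim})\big)$ and using $e(\fq,R)\ge\length(R/\fq^{\lim})$, constancy of $\length(\fq^{\lim}/\fq)$ bounds $\length(R/\fq)-e(\fq,R)$, so $R$ is generalized Cohen--Macaulay; then parameter ideals deep in $\m$ are standard, hence attain the bound in Lemma~\ref{lem: length of q^lim/q}; by constancy \emph{every} $\fq$ attains the bound; by the ``equality iff standard'' clause every $\fq$ is standard, i.e.\ $R$ is Buchsbaum. This is a two-line squeeze argument once you have Lemma~\ref{lem: length of q^lim/q}, and it replaces the vague step in your plan.
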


Now we suppose $(R,\m)$ is a Noetherian local ring of prime characteristic $p>0$. A classical result of Kunz \cite{Kunz1} shows that $R$ is regular if and only if the Frobenius endomorphism $F$: $R\to R$, $x\mapsto x^p$, is flat. Kunz's theorem is the starting point in the study of singularities via the Frobenius map. With the development of tight closure theory in Hochster--Huneke's seminal works \cite{HochsterHuneke1,HochsterHuneke2,HochsterHuneke3}, there is an explosion in the understanding of these ``F-singularities". Among them, we recall that a Noetherian local ring $(R,\m)$ is {\it F-rational} if $\fq^*=\fq$ for all ideals $\fq\subseteq R$ that is generated by a system of parameters, where $\fq^*$ denotes the tight closure of $\fq$ (see Section 2 for detailed definitions). Excellent F-rational rings are Cohen-Macaulay: this is basically due to the fact that tight closure captures certain colon ideals of parameter ideals. In connection with Hilbert-Samuel multiplicity, Goto--Nakamura \cite{GotoNakamuraMultiplicityTightClosureParameters,GotoNakamuraBoundofDifference} have shown that, under mild assumptions, $R$ is F-rational if and only if $e(\fq, R)=\length(R/\fq^*)$ for one (or equivalently, all) $\fq$, and $R$ is F-rational on the punctured spectrum if and only if the difference $e(\fq, R)- \length(R/\fq^*)$ is uniformly bounded above for all parameter ideals $\fq$.

Roughly speaking, we can think of F-rationality as a tight closure analog and strengthening of the Cohen-Macaulay property. From this perspective, the aforementioned results of Goto--Nakamura strongly suggest that there should be a tight closure analog of Buchsbaum theory. More precisely, it seems very natural to ask what rings satisfy the property that the difference $e(\fq, R)-\length(R/\fq^*)$, or $\length(\fq^*/\fq)$, does not depend on the parameter ideal $\fq$. This question has been studied by the second author in \cite{PhamHungQuyTightClosureParameterIdeal} and partial results are obtained. In this paper, we develop such a Buchsbaum theory for tight closure and completely answer some questions raised in \cite{PhamHungQuyTightClosureParameterIdeal}. We prove the following result which is in parallel with Theorem \ref{thm: Buchsbaum theory} (again, we refer to Section 2 for unexplained terms below):

\begin{mainthm*}[=Theorem \ref{thm: main theorem}]
Let $(R,\m, k)$ be an unmixed excellent local ring of prime characteristic $p>0$ and dimension $d\geq 1$. Let $\fq$ denote an ideal generated by a system of parameters. Then the following conditions are equivalent:
\begin{enumerate}[(1)]
  \item The difference $e(\fq, R)-\length(R/\fq^*)$ is independent of $\fq$.
  \item $\length(\fq^*/\fq)$ is independent of $\fq$.
  \item $\m\fq^*\subseteq \fq$ for every $\fq$, that is, $\tau_{\pa}(R)$ contains $\m$.
  \item The ${}^*$-truncation $\tau^{<d,*}\mathbf{R}\Gamma_\m R$ is quasi-isomorphic to a complex of $k$-vector spaces.
\end{enumerate}
\end{mainthm*}

This paper is organized as follows. In Section 2 we collect preliminary results on tight closure, local cohomology, Buchsbaum and generalized Cohen-Macaulay modules, and basic knowledge of derived category that will be used throughout. In Section 3 we introduce limit closure and prove $(1)\Leftrightarrow(2)$ of the Main Theorem. Section 4 is devoted to an affirmative answer to a question proposed by the second author in \cite{PhamHungQuyTightClosureParameterIdeal} about system of parameters that are contained in the parameter test ideal, which will imply $(3)\Rightarrow(2)$ of the Main Theorem. Finally, in Section 5 we conclude by proving the remaining parts of the Main Theorem.

\subsection*{Notations and Conventions} All rings appear in this paper are commutative with multiplicative identity $1$. We will often use $(R,\m,k)$ to denote a Noetherian local ring with unique maximal ideal $\m$ and residue field $k=R/\m$. We refer the reader to \cite[Chapter 1-4]{BrunsHerzogBook} for some basic notions such as Cohen-Macaulay rings, regular sequence, Koszul complex, and the Hilbert-Samuel multiplicity. We refer the reader to \cite[Chapter 13]{MatsumuraCommutativeAlgebra} for definition and basic properties of excellent rings.

\section{Preliminaries}
\subsection*{Tight closure, test element, and test exponent} Let $R$ be a Noetherian ring of prime characteristic $p>0$ and let $I\subseteq R$ be an ideal of $R$. Set $R^{\circ} := R \setminus \bigcup_{\fp \in \mathrm{Min}R} \fp$. We recall that the {\it tight closure} of $I$ is the ideal
$$I^* := \{x \mid cx^{p^e} \in I^{[p^e]} \text{ for some } c \in R^{\circ} \text{ and for all } e \gg 0\},$$
where $I^{[p^e]} := \{x^{p^e} \mid x \in I\}$ denotes the $e$-th Frobenius power of $I$.

An element $c\in R^\circ$ is called a {\it test element} if, for all ideals $I\subseteq R$, $x\in I^*$ if and only if $cx^{p^e}\in I^{[p^e]}$ for all $e\geq 0$. Test elements are known to exist when $R$ is a reduced excellent local ring \cite[Theorem 6.1]{HochsterHuneke2}. Fix a test element $c$. We say that $p^e$ is a {\it test exponent} of the pair $(c,I)$, if whenever $cx^{p^{e'}}\in I^{[p^{e'}]}$ for some $e'\geq e$, then $x\in I^*$. The existence of test exponent is closely related to the localization problem of tight closure, see \cite{HochsterHunekeTestExponents}. In this article, we need the following result of Sharp on the existence of test exponent that works simultaneously for all ideals generated by a system of parameters.

\begin{theorem}[{\cite[Corollary 2.4]{SharpTestExponents}}]
\label{thm: sharp text exponent}
Let $(R,\m)$ be a reduced and equidimensional excellent local ring of prime characteristic $p>0$ and let $c\in R^\circ$ be a test element. Then there exists $e>0$ such that $p^e$ is a test exponent of $(c, \fq)$ for every ideal $\fq\subseteq R$ that is generated by a system of parameters.
\end{theorem}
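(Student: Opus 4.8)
The plan is to reduce the entire statement to a single assertion about the top local cohomology module $A:=\lc^d_\m(R)$, equipped with its natural Frobenius action $F\colon A\to A$ (so that $F(ra)=r^{p}F(a)$); the point is that $A$ does not depend on $\fq$, so any bound extracted from $A$ is automatically uniform in $\fq$. For a parameter ideal $\fq=(x_1,\dots,x_d)$ let $\psi\colon R/\fq\to A$ be the canonical map arising from the presentation $A=\varinjlim_t R/(x_1^t,\dots,x_d^t)$; a direct computation shows that $F^{e}(\psi(\bar x))$ is the image of $x^{p^e}$ under $R/\fq^{[p^e]}\to A$, so that $cF^{e}(\psi(\bar x))$ is the image of $cx^{p^e}$. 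Since $R$ is reduced, equidimensional, and excellent, $\widehat R$ is reduced and equidimensional (hence unmixed), and in this setting one has $\fq^{*}=\psi^{-1}(0^{*}_{A})$ for every parameter ideal $\fq$, where $0^{*}_{A}=\{a\in A:cF^{e}(a)=0\text{ for all }e\ge 0\}$ is the tight closure of $0$ in $A$. This identity is where unmixedness genuinely enters---it rests on colon-capturing and on the fact that $c$ multiplies tight closures of parameter ideals back into the ideal---together with the standard point that the fixed test element $c$ suffices to test $0^{*}$ in the (Artinian, non-Noetherian) module $\lc^d_\m(R)$. Granting this, it is enough to produce $E>0$ such that $\ker(A\xrightarrow{\,cF^{e'}\,}A)=0^{*}_{A}$ for every $e'\ge E$: indeed if $cx^{p^{e'}}\in\fq^{[p^{e'}]}$ for such an $e'$, then $cF^{e'}(\psi(\bar x))=0$, so $\psi(\bar x)\in 0^{*}_{A}$, so $x\in\fq^{*}$.

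The obstacle is that the submodules $N_{e}:=\ker(A\xrightarrow{cF^{e}}A)$ do \emph{not} form a monotone family, so one cannot directly invoke the Artinian condition to stabilize them; all one gets for free is $\bigcap_{e}N_{e}=0^{*}_{A}$ and that the finite intersections $N_{0}\cap\cdots\cap N_{e}$ form a descending chain. I would get around this by passing to the quotient on which Frobenius becomes injective. Let $\mathcal L:=\bigcup_{e}\ker F^{e}$ be the $F$-nilpotent submodule of $A$; by the Hartshorne--Speiser--Lyubeznik theorem (applicable since $R$ is excellent) there is an $e_0$ with $\mathcal L=\ker F^{e_0}$. Set $A':=A/\mathcal L$ with quotient map $\pi\colon A\to A'$; then $F$ descends to an \emph{injective} Frobenius on $A'$, which is again Artinian, and using $\mathcal L=\ker F^{e_0}$ together with the definition of tight closure of $0$ one checks that $0^{*}_{A}=\pi^{-1}(S')$, where $S':=\{b\in A':cF^{e}(b)=0\text{ for all }e\ge 0\}=\bigcap_{e}\ker(A'\xrightarrow{cF^{e}}A')$.

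It then remains to bound things on $A'$, where injectivity does the work. Write $N'_{e}:=\ker(A'\xrightarrow{cF^{e}}A')$ and $P'_{e}:=N'_{0}\cap\cdots\cap N'_{e}$. If $b\in N'_{e'}$, so $cF^{e'}(b)=0$, then for each $0\le j\le e'$ multiplication by $c^{p^{e'-j}-1}$ gives $F^{e'-j}(cF^{j}(b))=c^{p^{e'-j}}F^{e'}(b)=0$, and since $F^{e'-j}$ is injective on $A'$ we get $cF^{j}(b)=0$; thus $N'_{e'}\subseteq P'_{e'}$. The descending chain $P'_{0}\supseteq P'_{1}\supseteq\cdots$ stabilizes (Artinianness of $A'$), necessarily at $\bigcap_{e}P'_{e}=S'$; let $e_1$ be a stabilization index. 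Then for every $e'\ge e_1$ we have $S'\subseteq N'_{e'}\subseteq P'_{e'}=S'$, hence $N'_{e'}=S'$. Taking $E:=e_1$ and chasing back through $\pi^{-1}(S')=0^{*}_{A}$ and the reduction in the first paragraph finishes the proof.

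I expect the main obstacle to be exactly the point addressed in the last two paragraphs: there is no monotone family of submodules of $A$ directly detecting membership in $0^{*}_{A}$, and the remedy---quotienting out the Frobenius-nilpotent part, which is ``finitely generated in the Frobenius direction'' by Hartshorne--Speiser--Lyubeznik, so that Frobenius becomes injective and Frobenius powers may be cancelled---is the technical heart. A secondary delicate point is the reduction $\fq^{*}=\psi^{-1}(0^{*}_{\lc^d_\m(R)})$, which is precisely where equidimensionality/unmixedness is used and which also relies on the nontrivial fact that a single test element detects tight closure of $0$ in the non-finitely-generated module $\lc^d_\m(R)$.
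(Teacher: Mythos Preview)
The paper does not supply its own proof of this statement; it is quoted as \cite[Corollary 2.4]{SharpTestExponents} and used as a black box. So there is nothing in the paper to compare your argument against.

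That said, your proposal is correct and is essentially Sharp's original strategy. The reduction $\fq^{*}=\psi^{-1}(0^{*}_{H^{d}_{\m}(R)})$ via colon-capturing is exactly the mechanism that makes the problem uniform in $\fq$; the passage to $A'=A/\mathcal{L}$ using the Hartshorne--Speiser--Lyubeznik bound is the standard way to force injectivity of Frobenius so that the kernels $N'_{e}$ become monotone in the sense $N'_{e'}\subseteq P'_{e'}$; and the Artinian descending-chain argument then pins down a common test exponent. One small point worth making explicit in your write-up: you only need the containment $N_{e'}\subseteq\pi^{-1}(N'_{e'})$ (which is immediate), not equality, so the final ``chasing back'' step is simply $N_{e'}\subseteq\pi^{-1}(N'_{e'})=\pi^{-1}(S')=0^{*}_{A}$ for $e'\ge e_{1}$.
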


Let $(R,\m)$ be a Noetherian local ring of prime characteristic $p>0$. The {\it parameter test ideal} of $R$ can be defined by $$\tau_{\pa}(R) := \bigcap_{\fq} (\fq: \fq^*),$$ where $\fq$ runs over all ideals $\fq$ generated by a system of parameters. Clearly, any test element is contained in $\tau_{\pa}(R)$. We say $R$ is {\it F-rational} if $\tau_{\pa}(R) = R$, that is, $\fq^*=\fq$ for every ideal $\fq$ generated by a system of parameters. If $R$ is F-rational then $R$ is normal, and if additionally $R$ is excellent, then $R$ is Cohen-Macaulay, see \cite[Theorem 8.2]{HunekeSixLecutresCA}. In particular, F-rational rings of dimension at most one are regular. 

\subsection*{Frobenius action on local cohomology} Let $R$ be a Noetherian ring of prime characteristic $p>0$. For any ideal $I$ that is generated up to radical by $\underline{x} := x_1, \ldots, x_t$, the local cohomology module $H^i_I(R)$ can be defined as the cohomology of the \v{C}ech complex
$$
C^\bullet(\underline{x}; R):= 0 \to R \to \bigoplus_{i=1}^t R_{x_i} \to \cdots \to R_{x_1 \cdots x_t} \to 0.
$$
The Frobenius endomorphism $F:R \to R$ and its localizations thus induce a natural Frobenius action $F$ on $H^i_I(R)$ for each $i$. Of particular interest is the top local cohomology module, which can be described as
$$
H^t_I(R) \cong\varinjlim_n R/(x_1^n, \ldots, x_t^n),
$$
where the map in the system $\varphi_{n, m}: R/(x_1^n, \ldots, x_t^n) \to R/(x_1^m, \ldots, x_t^m)$ is the multiplication by $(x_1 \cdots x_t)^{m - n}$ for all $m \ge n$. Then for each $\overline{a} \in H^t_{I}(R)$, which is the canonical image of some $a+(x_1^n, \ldots, x_t^n)$, we find that $F(\overline{a})$ is the canonical image of $a^p +(x_1^{pn}, \ldots, x_t^{pn})$. We define the tight closure of the zero submodule of $H^t_I(R)$ as follows
$$0^*_{H^t_I(R)} = \{ \eta \mid cF^e(\eta)=0 \text{ for some } c \in R^{\circ} \text{ and for all } e \gg 0 \}.$$
Suppose $(R, \frak m)$ is an equidimensional excellent local ring of dimension $d$, and $x_1, \ldots, x_d$ is a system of parameters. It follows from the same argument as in \cite[Proposition 3.3]{SmithTightClosureParameterIdeal} that
\begin{equation}
\label{eqn: top lc}
0^*_{H^d_m(R)} \cong \varinjlim_n \frac{(x_1^n, \ldots, x_d^n)^*}{(x_1^n, \ldots, x_d^n)}.
\end{equation}
We see that $\tau_{\pa}(R)$ annihilates $0^*_{H^d_\m(R)}$. We also note that, when $R$ is Cohen-Macaulay, all the transition maps in the above direct limit are injective. It follows that an excellent local ring $(R,\m)$ is F-rational if and only if $R$ is Cohen-Macaulay and $0^*_{H_\m^d(R)}=0$. We refer the reader to \cite{SmithTightClosureParameterIdeal,SmithFrationalrings} for more details.

For a Noetherian domain $R$, the {\it absolute integral closure} of $R$, denoted by $R^+$, is the integral closure of $R$ in an algebraic closure of the fraction field of $R$. If $R$ is Noetherian and reduced, then we define $R^+:=\prod_{\fp \in \mathrm{Min}R} (R/\fp)^+$. We caution the reader that $R^+$ is rarely Noetherian as a ring. The {\it plus closure} of an ideal $I\subseteq R$ is defined to be $I^+:=IR^+\cap R$.  The following important results are proved by Hochster--Huneke and Smith respectively, for excellent local domains. But the statements can be immediately extended to the reduced and equidimensional setting under our definition of $R^+$.

\begin{theorem}[{\cite[Theorem 1.1]{HochsterHunekeBig}}]
\label{thm: HochsterHunekebigCM}
Let $(R,\m)$ be a reduced and equidimensional excellent local ring of prime characteristic $p>0$. Then $R^+$ is a big Cohen-Macaulay $R$-algebra, i.e., every system of parameters of $R$ is a regular sequence in $R^+$.
\end{theorem}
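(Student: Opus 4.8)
The plan is to prove this deep theorem of Hochster--Huneke along the original lines: reduce it to the vanishing of the lower local cohomology of $R^{+}$, and then kill local cohomology classes one at a time inside module-finite domain extensions. First I would dispose of the reductions. Using $R^{+}=\prod_{\fp\in\Min R}(R/\fp)^{+}$, the fact that a system of parameters of a reduced equidimensional excellent local ring extends to a system of parameters of each $R/\fp$, and that a sequence is regular on a finite product iff it is regular on each factor, we reduce to the case that $R$ is a domain; a standard base change to the completion (noting that $\widehat R$ is again reduced and equidimensional and that its minimal primes contract to $0$, so $R^{+}$ maps compatibly into $\widehat R^{+}$) further reduces to $(R,\m)$ a complete local domain of dimension $d$. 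By Cohen's structure theorem $R$ is module-finite over a complete regular local ring $A=k[[x_{1},\dots,x_{d}]]$, with $x_{1},\dots,x_{d}$ a system of parameters of $R$.

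Next I would reformulate the goal: it is enough to show $H^{i}_{\m}(R^{+})=0$ for every $i<d$. Indeed, $R\to R^{+}$ is faithful, so $H^{d}_{\m}(R^{+})\ne 0$, and the depth-sensitivity of the \v{C}ech complex then forces every system of parameters of $R$ to be a regular sequence on $R^{+}$. Writing $R^{+}=\varinjlim_{S}S$ over module-finite domain extensions $R\subseteq S\subseteq R^{+}$ and using that local cohomology commutes with direct limits, $H^{i}_{\m}(R^{+})=\varinjlim_{S}H^{i}_{\m}(S)$; so it suffices to prove that for every such $S$, every $i<d$, and every class $\eta\in H^{i}_{\m}(S)$, there is a module-finite domain extension $S\subseteq T\subseteq R^{+}$ with $\eta\mapsto 0$ in $H^{i}_{\m}(T)$. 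After enlarging $S$ we may assume $S$ is generically separable over $A$ (this is the step that genuinely exploits characteristic $p$ and where care is needed when $k$ is imperfect). Since $\sqrt{(x_{1},\dots,x_{d})S}=\m_{S}$, local duality over the regular ring $A$ gives $H^{i}_{\m}(S)\cong\Ext^{\,d-i}_{A}(S,A)^{\vee}$; as $S$ is generically free over $A$, the module $\Ext^{d-i}_{A}(S,A)$ is $A$-torsion for $i<d$, so $\eta$ lies in an Artinian $A$-module annihilated by some nonzero $c\in A$.

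The class $\eta$ and all of its Frobenius iterates $F^{e}(\eta)\in H^{i}_{\m}(S)$ (the Frobenius of $S$ acts additively and $p$-semilinearly on local cohomology) are annihilated by powers of $c$ and live in this Artinian torsion module; this is exactly the situation handled by the Equational Lemma of Hochster--Huneke. That lemma produces, after passing to a module-finite domain extension $T$ of $S$ inside $R^{+}$ --- generic separability being what keeps $T$ a domain and module-finite over $A$ --- an element witnessing a suitable additive-polynomial relation for a representative of $\eta$, and the cohomological content of the existence of this element is precisely that $\eta$ maps to $0$ in $H^{i}_{\m}(T)$. Iterating over all classes $\eta$ and all $i<d$ gives $H^{i}_{\m}(R^{+})=0$ for $i<d$, completing the argument.

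The main obstacle is the Equational Lemma itself, together with the bookkeeping that puts $\eta$ into the form to which it applies: one must arrange generic separability of the Noether normalization (not automatic for imperfect $k$, requiring either an enlargement of $A$ or a suitably robust form of the lemma) and must verify that the root one adjoins stays inside a module-finite domain contained in $R^{+}$. By comparison, the reductions to the complete local domain case, the passage to the direct limit, and the deduction that vanishing of $H^{i}_{\m}(R^{+})$ for $i<d$ yields the regular-sequence property are all routine.
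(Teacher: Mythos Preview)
The paper does not prove this theorem at all: it is stated as a black-box citation of \cite[Theorem 1.1]{HochsterHunekeBig}. The only argument the paper supplies is the one-line remark preceding the statement, namely that Hochster--Huneke prove the result for excellent local domains and that the extension to the reduced equidimensional case is immediate from the definition $R^{+}=\prod_{\fp\in\Min R}(R/\fp)^{+}$. Your first paragraph already contains exactly this reduction; everything after that is a sketch of the original Hochster--Huneke proof, which the paper neither reproduces nor needs.

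As for the sketch itself: the broad architecture (reduce to a complete local domain, reformulate as vanishing of $H^{i}_{\m}(R^{+})$ for $i<d$, write $R^{+}$ as a direct limit of module-finite domain extensions, and kill classes via the Equational Lemma) is the correct shape of the Hochster--Huneke argument. But you are compressing the genuinely hard part into a single sentence. The Equational Lemma does not apply to an arbitrary class $\eta$ merely because it is torsion; one must produce a monic additive polynomial satisfied by the Frobenius iterates of $\eta$, and this requires the delicate ``annihilator'' machinery and the generic-\'etale normalization that occupy the bulk of \cite{HochsterHunekeBig}. Your invocation of local duality to see that $\Ext^{d-i}_{A}(S,A)$ is torsion is fine, but ``this is exactly the situation handled by the Equational Lemma'' hides essentially the entire content of the theorem. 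If you intend this as a proof rather than a pointer to the literature, that step would need to be unpacked substantially.
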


\begin{theorem}[{\cite[Theorem 5.1]{SmithTightClosureParameterIdeal}}]
\label{thm: SmithPlusClosure}
Let $(R,\m)$ be a reduced and equidimensional excellent local ring of prime characteristic $p>0$ and dimension $d$. Then for every ideal $\fq$ generated by a system of parameters, we have $\fq^*=\fq^+$ and that $0^*_{H_\m^d(R)}=\ker(H_\m^d(R)\to H_\m^d(R^+))$.
\end{theorem}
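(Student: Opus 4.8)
The plan is to deduce the statement from Smith's theorem for excellent local \emph{domains} \cite[Theorem 5.1]{SmithTightClosureParameterIdeal}, using the product decomposition $R^+=\prod_{\fp\in\Min R}(R/\fp)^+$. Write $\fp_1,\dots,\fp_s$ for the minimal primes of $R$ and $\pi_i\colon R\to R/\fp_i$ for the quotient maps; note $R\hookrightarrow\prod_i R/\fp_i\hookrightarrow R^+$ since $R$ is reduced. Because $R$ is equidimensional, for any ideal $\fq$ generated by a system of parameters $x_1,\dots,x_d$ the images $\pi_i(x_1),\dots,\pi_i(x_d)$ generate an ideal $\overline{\fq}_i$ that is itself generated by a system of parameters of the excellent local domain $R/\fp_i$ (indeed $\dim R/\fp_i=d$ and $\dim R/(\fp_i+\fq)=0$), so Smith's theorem applies to each pair $(R/\fp_i,\overline{\fq}_i)$ and gives $\overline{\fq}_i^{\,*}=\overline{\fq}_i^{\,+}$.

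For the equality $\fq^*=\fq^+$ I would establish two comparison statements. First, for any ideal $I\subseteq R$ one has $x\in I^*$ if and only if $\pi_i(x)\in(I(R/\fp_i))^*$ for all $i$: the forward direction follows by reducing a tight closure relation $cx^{p^e}\in I^{[p^e]}$ (with $c\in R^\circ$) modulo each $\fp_i$, since $\pi_i(c)\neq 0$; for the converse, pick $d_i\in R\setminus\fp_i$ witnessing $\pi_i(x)\in(I(R/\fp_i))^*$, so $d_ix^{p^e}\in I^{[p^e]}+\fp_i$ for $e\gg0$, pick $e_i\in(\bigcap_{j\neq i}\fp_j)\setminus\fp_i$ (possible since the minimal primes are pairwise incomparable), observe $e_i\fp_i\subseteq\bigcap_j\fp_j=0$ so that $(e_id_i)x^{p^e}\in I^{[p^e]}$ for $e\gg0$, and take $c:=\sum_{i=1}^s e_id_i$, which lies in $R^\circ$ because modulo each $\fp_j$ it equals $e_jd_j\neq 0$. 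Second, $x\in I^+$ if and only if $\pi_i(x)\in(I(R/\fp_i))^+$ for all $i$: this is a direct unwinding of $R^+=\prod_i(R/\fp_i)^+$, using $IR^+\cap R=\{x:\pi_i(x)\in I(R/\fp_i)^+\text{ for all }i\}$ together with $I(R/\fp_i)^+\cap(R/\fp_i)=(I(R/\fp_i))^+$. Combining these with $\overline{\fq}_i^{\,*}=\overline{\fq}_i^{\,+}$ yields $\fq^*=\fq^+$ for every parameter ideal $\fq$.

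For the local cohomology equality, I would compute both sides along the same direct system. Since $\m R^+$ is generated up to radical by $x_1,\dots,x_d$, the \v{C}ech complex on these elements computes both, giving $H^d_\m(R)=\varinjlim_n R/(x_1^n,\dots,x_d^n)$ and $H^d_\m(R^+)=\varinjlim_n R^+/(x_1^n,\dots,x_d^n)R^+$, with the natural map induced level-wise by the quotient maps $R/(x_1^n,\dots,x_d^n)\to R^+/(x_1^n,\dots,x_d^n)R^+$ and compatible with the transition maps (multiplication by $(x_1\cdots x_d)^{m-n}$). Since direct limits are exact,
$$\ker\bigl(H^d_\m(R)\to H^d_\m(R^+)\bigr)=\varinjlim_n\frac{(x_1^n,\dots,x_d^n)R^+\cap R}{(x_1^n,\dots,x_d^n)}=\varinjlim_n\frac{(x_1^n,\dots,x_d^n)^+}{(x_1^n,\dots,x_d^n)},$$
and by the first part this is $\varinjlim_n(x_1^n,\dots,x_d^n)^*/(x_1^n,\dots,x_d^n)$, which equals $0^*_{H^d_\m(R)}$ by \eqref{eqn: top lc}.

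I expect the one genuinely delicate step to be the converse of the first comparison statement: a naive product or sum of the lifted coefficients $d_i$ need not lie in $R^\circ$, so the construction must simultaneously arrange that the single element $c=\sum_i e_id_i$ avoids every minimal prime and clears the tight closure relation for all large $e$ — this is precisely the role of the auxiliary elements $e_i$. It is also worth emphasizing that the local cohomology equality should be routed through the already-established $\fq^*=\fq^+$ rather than obtained directly from Smith's domain-case local cohomology statement, since $H^d_\m(R)\to\prod_i H^d_\m(R/\fp_i)$ need not be injective; and that \eqref{eqn: top lc}, which requires $R$ to be equidimensional and excellent, is being used in exactly that setting.
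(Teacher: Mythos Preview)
The paper does not supply its own proof of this statement: it is quoted from \cite{SmithTightClosureParameterIdeal} with the remark that Smith's domain result ``can be immediately extended to the reduced and equidimensional setting under our definition of $R^+$''. Your proposal correctly and carefully carries out exactly this extension via the product decomposition $R^+=\prod_i (R/\fp_i)^+$, so it is both correct and aligned with what the paper has in mind.
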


A Noetherian local ring $(R,\m)$ is called {\it F-injective} if the natural Frobenius action on $H_\m^i(R)$ is injective for every $i$. F-rational rings are always F-injective, and F-injective rings are reduced, see \cite{DattaMurayamaFinjective} or \cite[Chapter 4]{MaPolstraFsingularitiesBook}.

\subsection*{Buchsbaum and generalized Cohen-Macaulay modules} In this subsection we recall the basic definitions of Buchsbaum and generalized Cohen-Macaulay modules. We refer the reader to \cite{StuckradVogelBuchsbaumRingsBook} and \cite{TrungGeneralizedCM} for more details.

Let $(R,\m)$ be a Noetherian local ring and let $M$ be a finitely generated $R$-module of dimension $d$. Then $M$ is called {\it generalized Cohen-Macaulay} if $H_\m^i(M)$ has finite length for all $i<d$. A system of parameters $x_1,\dots,x_d$ of $M$ is called {\it standard} if
$$\length(M/\fq M)-e(\fq, M)=\sum_{i=0}^{d-1}\binom{d-1}{i}\length(H_\m^i(M))$$
where $\fq:=(x_1,\dots,x_d)$.
We say $M$ is a {\it Buchsbaum} $R$-module if every system of parameters of $M$ is standard. We say $R$ is a generalized Cohen-Macaulay ring (resp. Buchsbaum ring) if $R$ is a generalized Cohen-Macaulay module (resp. Buchsbaum module) over itself.

\begin{remark}
\label{rmk: generalized CM}
With notation as above, we have
\begin{enumerate}[(1)]
  \item Let $(R,\m)$ be homomorphic image of a Cohen-Macaulay ring (e.g., $R$ is excellent, see \cite[Corollary 1.2]{KawasakiArithmeticMacaulayfication}) and let $M$ be a finitely generated $R$-module of dimension $d$. Then $M$ is generalized Cohen-Macaulay if and only if $M_{\fp}$ is Cohen-Macaulay of dimension $d-\dim(R/\fp)$ for all $\fp\in\Spec(R)\backslash\{\m\}$. In particular, $R$ is generalized Cohen-Macaulay if and only if $R$ is equidimensional and $R_{\fp}$ is Cohen-Macaulay for all $\fp\in\Spec(R)\backslash\{\m\}$, see \cite[Satz 2.5 and Satz 3.8]{CuongSchenzelTrung} or \cite[Lemma 1.2 and Lemma 1.4]{TrungGeneralizedCM}.
  \item We have that $M$ is a generalized Cohen-Macaulay $R$-module if and only if $\sup \{\length(M/\fq M)-e(\fq, M)\}<\infty,$ where $\fq$ runs over all ideals generated by a system of parameters of $M$, see \cite[Satz 3.3]{CuongSchenzelTrung} or \cite[Lemma 1.1]{TrungGeneralizedCM}.
  \item Suppose $M$ is a generalized Cohen-Macaulay $R$-module of dimension $d$. Then for every ideal $\fq$ generated by a system of parameters of $M$, we have
  $$\length(M/\fq M)-e(\fq, M)\leq \sum_{i=0}^{d-1}\binom{d-1}{i}\length(H_\m^i(M))$$
  and equality holds when $\fq$ is contained in a sufficiently large power of $\m$ (so such $\fq$ is standard), see \cite[Satz 3.7]{CuongSchenzelTrung} or \cite[Lemma 1.5]{TrungGeneralizedCM}. 
\end{enumerate}
\end{remark}

We will need the following two criterions of Buchsbaum modules.

\begin{theorem}[{\cite[Proposition 3.2]{TrungGeneralizedCM}}]
\label{thm: finite criteion for Buchsbaum}
Let $(R,\m)$ be a Noetherian local ring and let $M$ be a finitely generated $R$-module of dimension $d$. Let $\{y_1,\dots,y_n\}$ be a fixed generating set of $\m$. Then $M$ is Buchsbaum if for any $d$ element subset $\{x_1,\dots,x_d\}$ of $\{y_1,\dots,y_n\}$, $x_1,\dots,x_d$ is a standard system of parameters of $M$.
\end{theorem}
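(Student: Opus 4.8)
The plan is to argue straight from the definition: $M$ is Buchsbaum precisely when \emph{every} system of parameters of $M$ is standard, so the whole task is to promote the hypothesis — which concerns only the finitely many systems of parameters arising as $d$-element subsets of $\{y_1,\dots,y_n\}$ — to all systems of parameters. First I would record the cheap consequences. Since the hypothesis in particular asserts that some such subset \emph{is} a standard system of parameters, the finite sum $\sum_{i=0}^{d-1}\binom{d-1}{i}\length(H_\m^i(M))$ appearing on the right of the defining equality must be finite, so $M$ is generalized Cohen--Macaulay; then by Remark~\ref{rmk: generalized CM} it is equidimensional, and each $y_j$, belonging to some subset that is a system of parameters, is a parameter of $M$. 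Writing $I(M)$ for that sum, Remark~\ref{rmk: generalized CM}(3) gives $\length(M/\fq M)-e(\fq,M)\le I(M)$ for every parameter ideal $\fq$, with equality exactly when the corresponding system of parameters is standard; so the goal is to prove this equality for \emph{every} parameter ideal.

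The first step is to show $M$ is quasi-Buchsbaum, i.e.\ $\m H_\m^i(M)=0$ for all $i<d$. Here I would use the basic fact that an ideal generated by a standard system of parameters annihilates $H_\m^i(M)$ for every $i<d$ (part of the description of standard systems of parameters in \cite{CuongSchenzelTrung}). Applying this to each of the $\binom{n}{d}$ parameter ideals $\fq_t$ generated by a $d$-element subset of $\{y_1,\dots,y_n\}$, and using that $\sum_t\fq_t=(y_1,\dots,y_n)=\m$ because every $y_j$ occurs in some such subset, we get $\m H_\m^i(M)=\bigl(\sum_t\fq_t\bigr)H_\m^i(M)=\sum_t\bigl(\fq_t H_\m^i(M)\bigr)=0$ for $i<d$.

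The second step is an induction on $d$. For $d\le 1$ there is nothing left to prove: a module of dimension $0$ is Buchsbaum, and in dimension $1$ the relation $\m H_\m^0(M)=0$ just obtained is the Buchsbaum condition. Let $d\ge 2$ and fix any $j$. Set $\bar M:=M/y_jM$; it is generalized Cohen--Macaulay of dimension $d-1$, its maximal ideal is generated by the images $\bar y_i$, and any $(d-1)$-element subset $\{\bar y_i:i\in S\}$ that is a system of parameters of $\bar M$ is the image through $y_j$ of the $d$-element subset $\{y_i:i\in S\}\cup\{y_j\}$, which is a system of parameters of $M$ and hence, by hypothesis, standard — so $\{\bar y_i:i\in S\}$ is standard for $\bar M$ by permanence of standardness under passing to a quotient by one member of the system (\cite{CuongSchenzelTrung}). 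Thus $\bar M$ satisfies the hypothesis of the Proposition, and by the inductive hypothesis $M/y_jM$ is Buchsbaum for every $j$. It remains to invoke a hyperplane-section criterion for the Buchsbaum property (\cite{StuckradVogelBuchsbaumRingsBook}; see also \cite{SchenzelApplicationsofDualizingComplex}): a generalized Cohen--Macaulay module $M$ with $\m H_\m^i(M)=0$ for $i<d$ such that $M/xM$ is Buchsbaum for a suitably generic parameter $x$ is itself Buchsbaum. Since we have quasi-Buchsbaumness from the first step and $M/xM$ is Buchsbaum for the parameters $x=y_j$ — which generate $\m$, and after the harmless faithfully flat base change $R\to R(t)$, if needed, to produce generic parameters, which affects neither hypothesis nor conclusion — this applies and $M$ is Buchsbaum.

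The step I expect to be the main obstacle is the hyperplane-section descent: passing from ``$M/xM$ Buchsbaum for suitable $x$, together with quasi-Buchsbaumness'' to ``$M$ Buchsbaum'' is exactly where the content of the Buchsbaum property beyond quasi-Buchsbaumness lives — one must control the connecting maps $H_\m^{i}(M/xM)\to H_\m^{i+1}(M)$ and show that the extra ``extension data'' in $\tau^{<d}\mathbf{R}\Gamma_\m M$ is trivial, i.e.\ recover a piece of Schenzel's criterion (condition~(4) of Theorem~\ref{thm: Buchsbaum theory}). The secondary issues are to make precise exactly which parameters $x$ must have $M/xM$ Buchsbaum (so that the finitely many $x=y_j$ at hand suffice) and to handle a finite residue field cleanly.
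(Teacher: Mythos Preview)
The paper does not prove this statement; it is quoted from \cite[Proposition~3.2]{TrungGeneralizedCM} and used as a black box, so there is no in-paper argument to compare against.

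Judged on its own, your sketch has a real gap exactly where you flag it. The ``hyperplane-section criterion'' you appeal to in the final step --- that a quasi-Buchsbaum module $M$ with $M/xM$ Buchsbaum for suitably many parameters $x$ is itself Buchsbaum --- is not a result one can cite from \cite{StuckradVogelBuchsbaumRingsBook} or \cite{SchenzelApplicationsofDualizingComplex} in that form. The closest statement in the literature requires $x$ to be $M$-regular, and nothing in your hypotheses forces any individual generator $y_j$ to be a non-zerodivisor on $M$ (or on $M/H^0_\m(M)$). Dropping the regularity assumption is precisely where the content lives: Goto's quasi-Buchsbaum, non-Buchsbaum examples \cite{GotoQuasiBuchsbaum}, recalled in this paper immediately after Theorem~\ref{thm: schenzel}, show that $\m H^i_\m(M)=0$ for $i<d$ does not by itself trivialize the extension data in $\tau^{<d}\mathbf{R}\Gamma_\m M$, and it is exactly that triviality you need. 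Your induction has thus reduced the theorem to its hard core and then outsourced that core to an unproved lemma. There is also a smaller slip earlier: you assert that every $y_j$ lies in some $d$-element subset which is a system of parameters of $M$, but this can fail (e.g.\ $M=k[[x,y]]/(x)$ with the generators $x,y$ of $\m$), so the equality $\sum_t\fq_t=\m$ in your quasi-Buchsbaum step needs more care. Steps~1--3 are essentially repairable; step~4 as written is not a proof.
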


\begin{theorem}[{\cite{StuckradMathNachr} or \cite[Theorem 3.4]{TrungGeneralizedCM}}]
\label{thm: surjectivity criterion for Buchsbaum}
Let $(R,\m)$ be a Noetherian local ring and let $M$ be a finitely generated $R$-module of dimension $d$. Then $M$ is Buchsbaum if and only if the natural homomorphism
$H^i(\m, M)\to H_\m^i(M)$ is surjective for all $i<d$, where $H^i(\m, M)$ denotes the $i$-th Koszul cohomology on any set of generators of $\m$.
\end{theorem}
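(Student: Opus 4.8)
The plan is to realize the displayed map as the one induced on cohomology by the canonical morphism of complexes $K^\bullet(\underline{x};M)\to\check{C}^\bullet(\underline{x};M)$ from the Koszul cochain complex on a minimal generating set $\underline{x}=x_1,\dots,x_n$ of $\m$ to the \v{C}ech complex computing $H^\bullet_\m(M)$; because the ``extra'' members of a non-minimal generating set contribute zero to this comparison, the surjectivity hypothesis is independent of the chosen generators, so working with a minimal $\underline{x}$ loses nothing. I would first record the standard fact that the ideal $(\underline{x})=\m$ kills every Koszul cohomology module $H^j(\m,M)$ (if $z$ is a Koszul cycle, then $x_iz$ bounds $e_i\wedge z$); hence \emph{surjectivity of $\phi_j\colon H^j(\m,M)\to H^j_\m(M)$ forces $\m H^j_\m(M)=0$}, and in particular a module satisfying the surjectivity hypothesis for all $j<d$ is generalized Cohen--Macaulay, as is every Buchsbaum module. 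After the harmless reduction to the case of an infinite residue field via the faithfully flat map $R\to R[X]_{\m R[X]}$, I would argue by induction on $d=\dim M$.

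The cases $d=0$ (where both conditions are vacuous) and $d=1$ I would verify directly. For $d=1$, surjectivity of $\phi_0\colon(0:_M\m)\hookrightarrow\Gamma_\m(M)$ is exactly the equality $(0:_M\m)=\Gamma_\m(M)$, i.e.\ $\m\Gamma_\m(M)=0$. On the other hand, for any parameter $y$ one has $\Gamma_\m(M)\cap yM=y\Gamma_\m(M)$ (because $y$ is a nonzerodivisor on $M/\Gamma_\m(M)$), whence $\length(M/yM)-e(yR,M)=\length\!\big(\Gamma_\m(M)/y\Gamma_\m(M)\big)$; thus every system of parameters is standard if and only if $y\Gamma_\m(M)=0$ for every parameter $y$, which (as $k$ is infinite, so $\m$ is generated by parameters) is again $\m\Gamma_\m(M)=0$. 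This settles dimension one.

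For the inductive step $d\ge2$ the strategy is reduction modulo a generic element. I would choose a sufficiently general $x\in\m$ (a generic combination of $x_1,\dots,x_n$) so that $x$ is a parameter for $M$ and is filter-regular on $M$; then $N:=(0:_Mx)$ has finite length and $\overline{M}:=M/xM$ has dimension $d-1$. The bridge is the pair of short exact sequences $0\to N\to M\xrightarrow{x}xM\to0$ and $0\to xM\to M\to\overline{M}\to0$: tensoring each with the Koszul complex on $\underline{x}$, and separately applying $\mathbf{R}\Gamma_\m$, produces two ladders of long exact sequences linked by the comparison map, in which all contributions of $N$ have finite length, vanish in positive degree, and are therefore controlled by $\m N$. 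A diagram chase through these linked ladders should show that the comparison map is surjective for $M$ in all degrees $<d$ precisely when a certain package of auxiliary conditions holds — conditions governing $\m N$ and the $\m$-module structure of the $H^j_\m(M)$ for $j<d$ together with their connecting maps into $H^\bullet_\m(\overline{M})$ — and the comparison map is surjective for $\overline{M}$ in all degrees $<d-1$. Running the Buchsbaum side through the same reduction, I would invoke Theorem~\ref{thm: finite criteion for Buchsbaum} to replace ``every system of parameters of $M$ is standard'' by the finite test on systems of parameters drawn from $x_1,\dots,x_n$, and then, using the standard facts that $\m$ annihilates $H^j_\m$ of a Buchsbaum module for $j$ below its dimension and that a general hyperplane section of a Buchsbaum module is Buchsbaum, identify the Buchsbaum property of $M$ with the \emph{same} package of auxiliary conditions together with the Buchsbaum property of $\overline{M}$. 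Matching the two descriptions and applying the inductive hypothesis to $\overline{M}$ closes the induction.

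I expect the inductive step to be the principal difficulty, and within it the subtle point is the choice of $x$: it must be generic enough for the finite-length error terms in both ladders to collapse as claimed and for $\overline{M}$ to inherit whichever hypothesis is in force, yet in the ``only if'' directions one is not permitted to replace an arbitrary system of parameters by a generic one — this is precisely where Theorem~\ref{thm: finite criteion for Buchsbaum} does the work, reducing the verification of the Buchsbaum property to a favorably chosen finite family of systems of parameters. (For rings admitting a dualizing complex one could instead route this through the deficiency modules, but the direct argument above avoids that hypothesis.) The remaining steps — pinning down that the error terms in the ladders are governed exactly by $\m N$ and the $\m H^j_\m(M)$, and the bookkeeping in the diagram chases — are routine.
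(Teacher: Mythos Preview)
The paper does not prove this theorem: it is stated with a citation to \cite{StuckradMathNachr} and \cite[Theorem 3.4]{TrungGeneralizedCM} and is used as a black box (specifically in the proof of $(2)\Rightarrow(4)$ of the Main Theorem, to conclude that the Koszul cohomology of the Buchsbaum module $C$ surjects onto its local cohomology). There is therefore no ``paper's own proof'' to compare against.

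That said, your sketch is broadly in the spirit of Trung's original argument in \cite{TrungGeneralizedCM}: reduction to an infinite residue field, the observation that surjectivity forces $\m H^j_\m(M)=0$ for $j<d$ (hence generalized Cohen--Macaulayness), and an induction on dimension via a well-chosen parameter, with Theorem~\ref{thm: finite criteion for Buchsbaum} supplying the finite test that lets one work with favorable systems of parameters. Where your outline is thinnest is exactly where you flag it: the ``package of auxiliary conditions'' governing the connecting maps between $H^j_\m(M)$ and $H^\bullet_\m(\overline{M})$ is doing real work in both directions, and the claim that a general hyperplane section of a Buchsbaum module is Buchsbaum is itself nontrivial (and in Trung's treatment is essentially proved alongside the surjectivity criterion rather than assumed). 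If you want a self-contained proof, you should either make the diagram chase explicit or follow Trung's route more closely, which organizes the induction around the equivalence with weak $\m$-sequences rather than a direct comparison of ladders.
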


\subsection*{Derived torsion functors and truncation}
Let $R$ be a Noetherian ring and let $I\subseteq R$ be an ideal. For an $R$-module $M$, we use $\mathbf{R}\Gamma_IM$ to denote the {\it derived $I$-power torsion} of $M$, often viewed as an object in $D(R)$, the derived category of $R$-modules. To obtain an explicit representative of $\mathbf{R}\Gamma_IM$, one can either apply the usual $I$-power torsion functor to an injective resolution of $M$, or use the \v{C}ech complex $C^\bullet(\underline{x}, R)\otimes M$, where $\underline{x}=x_1,\dots,x_n$ is any generating set of $I$ up to radical. Note that the $i$-th cohomology of $\mathbf{R}\Gamma_IM$ is the $i$-th local cohomology module of $M$ supported at $I$.

We assign to $D(R)$ the standard $t$-structure $(D(R)^{\leq 0}, D(R)^{\geq 0})$. We have the usual truncation functors $\tau^{\leq n}$ and $\tau^{\geq n}$ so that for any object $K\in D(R)$, there is a canonical triangle
$$\tau^{<n}K \to K \to \tau^{\geq n}K \xrightarrow{+1}.$$
With these notations, Schenzel proved the following criterion for Buchsbaum rings.

\begin{theorem}[{\cite[Theorem 2.3]{SchenzelApplicationsofDualizingComplex}}]
\label{thm: schenzel}
Let $(R,\m, k)$ be a Noetherian local ring of dimension $d$. Then $R$ is Buchsbaum if and only if $\tau^{<d}\mathbf{R}\Gamma_\m R$ is quasi-isomorphic to a complex of $k$-vector spaces (i.e., it comes from an object of $D(k)$).
\end{theorem}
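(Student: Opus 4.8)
The plan is to deduce the statement from the surjectivity criterion of Theorem~\ref{thm: surjectivity criterion for Buchsbaum}. First I would reduce the target condition to something concrete: for $K\in D^b(R)$, being quasi-isomorphic to a complex of $k$-vector spaces is equivalent to $K$ being \emph{formal} (that is, $K\cong\bigoplus_i H^i(K)[-i]$ in $D(R)$) together with $\m H^i(K)=0$ for all $i$, since a complex of $k$-vector spaces is formal already over $k$, hence over $R$, and conversely. So the goal becomes: $R$ is Buchsbaum if and only if $\tau^{<d}\mathbf{R}\Gamma_\m R\cong\bigoplus_{i<d}H^i_\m(R)[-i]$ in $D(R)$ with $\m H^i_\m(R)=0$ for $i<d$. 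One may assume $R$ complete: the Koszul and local cohomology modules appearing below, and hence the surjectivity criterion, are unchanged under $\m$-adic completion, as is the object $\tau^{<d}\mathbf{R}\Gamma_\m R$ (it has $\m$-torsion cohomology).

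Fix a minimal generating set $\underline{y}=y_1,\dots,y_n$ of $\m$ and let $\varphi\colon K^\bullet(\underline{y};R)\to C^\bullet(\underline{y};R)=\mathbf{R}\Gamma_\m R$ be the canonical morphism from the Koszul cochain complex to the \v{C}ech complex; it induces on $i$-th cohomology precisely the natural map $\alpha^i\colon H^i(\underline{y};R)\to H^i_\m(R)$ of Theorem~\ref{thm: surjectivity criterion for Buchsbaum}. Koszul cohomology is killed by $(\underline{y})=\m$, so surjectivity of all $\alpha^i$ with $i<d$ automatically forces $\m H^i_\m(R)=0$ for $i<d$; the ``$k$-vector space'' halves of the two conditions therefore agree, and only ``Buchsbaum'' versus ``$\tau^{<d}\mathbf{R}\Gamma_\m R$ formal'' remains at stake. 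A key supporting fact is that $K^\bullet(\underline{y};R)$ is \emph{itself} quasi-isomorphic to a complex of $k$-vector spaces: a minimal Cohen presentation $S=k[[Y_1,\dots,Y_n]]\twoheadrightarrow R$ with $Y_i\mapsto y_i$ identifies $K^\bullet(\underline{y};R)$, up to a shift, with $k\otimes^{\mathbf{L}}_S R\simeq\bigoplus_i\Tor^S_i(k,R)[i]$ (read off a minimal $S$-free resolution of $R$), and each $\Tor^S_i(k,R)$ is killed by $\m_S$, hence by $\m$. In particular $\tau^{<d}K^\bullet(\underline{y};R)\simeq\bigoplus_{j<d}H^j(\underline{y};R)[-j]$.

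Granting this, ``Buchsbaum $\Rightarrow$ the derived condition'' is short. By Theorem~\ref{thm: surjectivity criterion for Buchsbaum} each $\alpha^i$ with $i<d$ is surjective; choose $k$-linear sections $s_i\colon H^i_\m(R)\to H^i(\underline{y};R)$, realize them by morphisms $\iota_i\colon H^i_\m(R)[-i]\to\tau^{<d}K^\bullet(\underline{y};R)$ landing in the $i$-th cohomology summand, and set $\beta:=\sum_{i<d}(\tau^{<d}\varphi)\circ\iota_i\colon\bigoplus_{i<d}H^i_\m(R)[-i]\to\tau^{<d}\mathbf{R}\Gamma_\m R$. A morphism out of $H^i_\m(R)[-i]$ induces zero on $H^j$ for $j\ne i$, while $H^i(\beta)=\alpha^i\circ s_i=\operatorname{id}$; thus $\beta$ is an isomorphism on every cohomology module, hence a quasi-isomorphism, and since $H^i_\m(R)$ is a quotient of $H^i(\underline{y};R)$ for $i<d$ it is a $k$-vector space. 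So $\tau^{<d}\mathbf{R}\Gamma_\m R$ comes from $D(k)$.

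The converse is the step I expect to be the main obstacle: a splitting of the \emph{target} $\tau^{<d}\mathbf{R}\Gamma_\m R$ does not by itself force the specific morphism $\varphi$ to be surjective on cohomology. The hypothesis does immediately make $R$ generalized Cohen--Macaulay with $\m H^i_\m(R)=0$ for $i<d$, hence quasi-Buchsbaum; to upgrade this I would pass through local duality. Writing $E=E_R(k)$ and $\omega_R^\bullet$ for the normalized dualizing complex of the complete ring $R$, Matlis-dualizing the triangle $\tau^{<d}\mathbf{R}\Gamma_\m R\to\mathbf{R}\Gamma_\m R\to H^d_\m(R)[-d]\xrightarrow{+1}$ and using $\mathbf{R}\Gamma_\m R\cong\mathbf{R}\Hom_R(\omega_R^\bullet,E)$ identifies $\mathbf{R}\Hom_R(\tau^{<d}\mathbf{R}\Gamma_\m R,E)$ with $\tau^{>-d}\omega_R^\bullet$; since the connecting map of $\omega_R[d]\to\omega_R^\bullet\to\tau^{>-d}\omega_R^\bullet\xrightarrow{+1}$ vanishes for $t$-structure reasons, the hypothesis becomes a splitting $\omega_R^\bullet\cong\omega_R[d]\oplus V$ of the dualizing complex with $V$ a complex of $k$-vector spaces (here $\omega_R=H^{-d}(\omega_R^\bullet)$). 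Dualizing $\varphi$ to a morphism $\omega_R^\bullet\to\mathbf{R}\Hom_R(K^\bullet(\underline{y};R),E)$ compatible with this splitting, whose cohomology in degree $-i$ is the Matlis dual of $\alpha^i$, and using that the $\omega_R[d]$ summand accounts for $H^d_\m(R)$ on the nose, one reads off injectivity of the dual of $\alpha^i$ — equivalently surjectivity of $\alpha^i$ — for $i<d$, and Theorem~\ref{thm: surjectivity criterion for Buchsbaum} concludes. A duality-free alternative is an induction on $d$: factor out a filter-regular parameter $x$ and relate $\tau^{<d}\mathbf{R}\Gamma_\m R$ to $\tau^{<d-1}\mathbf{R}\Gamma_\m(R/xR)$ via $\mathbf{R}\Gamma_\m R\xrightarrow{x}\mathbf{R}\Gamma_\m R\to\mathbf{R}\Gamma_\m(R/xR)\xrightarrow{+1}$, with base case $d=1$ the tautology ``$\m\Gamma_\m(R)=0$ if and only if $\Gamma_\m(R)$ is a $k$-vector space''.
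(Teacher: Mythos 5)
The paper does not actually prove this statement --- it is quoted from Schenzel --- so there is no internal proof to compare against; I will assess your argument on its own terms. Your forward direction is correct: $K^\bullet(\underline{y};R)\simeq k\otimes^{\mathbf{L}}_S R$ (up to shift) is indeed formal with cohomology killed by $\m$, and combining this with the surjectivity of the maps $\alpha^i$ from Theorem \ref{thm: surjectivity criterion for Buchsbaum} produces a quasi-isomorphism $\bigoplus_{i<d}H^i_\m(R)[-i]\to\tau^{<d}\mathbf{R}\Gamma_\m R$. This is exactly the mechanism the paper itself deploys in the proof of $(2)\Rightarrow(4)$ of the Main Theorem, where the last step is outsourced to \cite[II, Proposition 4.3]{StuckradVogelBuchsbaumRingsBook} (a map from a complex of $k$-vector spaces that is surjective on all cohomology forces the target into $D(k)$); your $\beta$ is in effect a proof of that lemma.

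The converse, however, has a genuine gap at precisely the point you flag. The duality step is fine up to and including the splitting $\omega_R^\bullet\cong\omega_R[d]\oplus V$ with $V=\tau^{>-d}\omega_R^\bullet$ a complex of $k$-vector spaces (the vanishing of the connecting map for $t$-structure reasons is correct). But the final assertion --- that one ``reads off'' injectivity of the Matlis dual of $\alpha^i$ --- is unsubstantiated: the splitting is a statement about the \emph{object} $\omega_R^\bullet$ and gives no control over the restriction of the particular morphism $\psi=\varphi^\vee$ to the summand $V$. There is no general principle making a morphism out of a formal complex of $k$-vector spaces injective on cohomology, and this is exactly the point where quasi-Buchsbaum non-Buchsbaum rings (which the paper warns about right after the theorem) would slip through if the step were formal. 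The induction alternative is likewise only a sketch: the cone of $x$ on $\tau^{<d}\mathbf{R}\Gamma_\m R$ differs from $\tau^{<d-1}\mathbf{R}\Gamma_\m(R/xR)$ by contributions involving $0:_{H^d_\m(R)}x$, and the descent ``$R/xR$ Buchsbaum $\Rightarrow R$ Buchsbaum'' is itself nontrivial. The standard way to close this direction (Schenzel's, mirrored in the paper's proof of $(4)\Rightarrow(3)$) avoids the surjectivity criterion altogether: for an arbitrary system of parameters $\underline{x}$ one tensors $K_\bullet(\underline{x},R)$ with the truncation triangle; since each $x_i$ acts as zero on a complex of $k$-vector spaces, $K_\bullet(\underline{x},R)\otimes\tau^{<d}\mathbf{R}\Gamma_\m R\simeq\bigoplus_{j}(\tau^{<d}\mathbf{R}\Gamma_\m R)[j]^{\oplus\binom{d}{j}}$, and an Euler-characteristic/length count on the resulting hypercohomology yields $\length(R/\fq)-e(\fq,R)=\sum_{i=0}^{d-1}\binom{d-1}{i}\length(H^i_\m(R))$ for \emph{every} $\fq$, i.e., every system of parameters is standard. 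I would replace the last step of your converse with an argument of this type.
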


In particular, Theorem \ref{thm: schenzel} implies immediately that if $(R,\m, k)$ is a Buchsbaum ring of dimension $d$, then $H_\m^i(R)$ is a $k$-vector space (i.e., annihilated by $\m$) for every $i<d$. We caution the reader that the converse is {\it not} true in general: there are many examples of Noetherian local rings $(R,\m, k)$ of dimension $d$ such that $H_\m^i(R)$ is a $k$-vector space for each $i<d$ but $R$ is not Buchsbaum, see \cite{GotoQuasiBuchsbaum}. Thus it is essential to work with a complex (or, in the derived category) in Schenzel's characterization of Buchsbaum rings.

Theorem \ref{thm: schenzel} has many applications. For example, using Theorem \ref{thm: schenzel}, it is proved in \cite{BhattMaSchwedeDualizingComplexFinjectiveDB} that F-injective generalized Cohen-Macualay rings are Buchsbaum (this was conjectured by Takagi and was first proved in \cite{MaFinjectiveBuchsbaum} using other methods). The full results in \cite{BhattMaSchwedeDualizingComplexFinjectiveDB} are stronger, and it yields a tight closure analog of the corresponding statement. To explain this, we recall that in \cite{BhattMaSchwedeDualizingComplexFinjectiveDB}, the {\it ${}^*$-truncation} (or {\it tight closure truncation}) of $\mathbf{R}\Gamma_\m R$ is defined as the object in $D(R)$ such that we have an exact triangle:
$$\tau^{<d,*}\mathbf{R}\Gamma_\m R \to \mathbf{R}\Gamma_\m R \to H_\m^d(R)/0^*_{H_\m^d(R)}[-d]\xrightarrow{+1}.$$
In particular, $h^i(\tau^{<d,*}\mathbf{R}\Gamma_\m R)=H_\m^i(R)$ for $i<d$ and $h^d(\tau^{<d,*}\mathbf{R}\Gamma_\m R)=0^*_{H_\m^d(R)}$.

\begin{theorem}[{\cite[Theorem 3.6]{BhattMaSchwedeDualizingComplexFinjectiveDB}}]
\label{thm: tight closure truncation}
Let $(R,\m, k)$ be an equidimensional excellent local ring of prime characteristic $p > 0$ and dimension $d$. Suppose $R$ is F-injective and $R_{\fp}$ is F-rational for all $\fp\in \Spec(R)\backslash\{\m\}$. Then $\tau^{<d, *}\mathbf{R}\Gamma_\m R$ is quasi-isomorphic to a complex of $k$-vector spaces (i.e., it comes from an object of $D(k)$).
\end{theorem}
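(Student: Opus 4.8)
The plan is to reduce the claim to the splitting of a single distinguished triangle and to obtain that splitting from the Frobenius action. First, the hypotheses force $R$ to be generalized Cohen--Macaulay: each $R_\fp$ with $\fp\neq\m$ is F-rational, hence Cohen--Macaulay, and $R$ is equidimensional and (being excellent) a homomorphic image of a Cohen--Macaulay ring, so Remark~\ref{rmk: generalized CM}(1) applies; in particular $H^i_\m(R)$ has finite length for all $i<d$, and $H^0_\m(R)=0$ since $R$ is reduced (we may assume $d\ge 1$, the case $d=0$ being trivial). Moreover $0^*_{H^d_\m(R)}$ has finite length: this follows from the hypothesis that $R$ is F-rational on the punctured spectrum, for instance via the boundedness results of Goto--Nakamura \cite{GotoNakamuraMultiplicityTightClosureParameters,GotoNakamuraBoundofDifference} applied to $\length(\fq^*/\fq)$ together with the generalized Cohen--Macaulay estimate of Remark~\ref{rmk: generalized CM}(2). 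Hence $K:=\tau^{<d,*}\mathbf{R}\Gamma_\m R$ is an object of $D^b(R)$, concentrated in degrees $[1,d]$, with finite length cohomology: $H^i_\m(R)$ for $i<d$ and $0^*_{H^d_\m(R)}$ in degree $d$. The \v{C}ech model of $\mathbf{R}\Gamma_\m R$ carries a $p$-linear Frobenius endomorphism $F$; since $0^*_{H^d_\m(R)}$ is $F$-stable, the defining triangle of $K$ is Frobenius-equivariant, so $F$ acts on $K$ and, by F-injectivity, is injective on each of its cohomology modules. For a finite length module $M$ equipped with an injective $p$-linear endomorphism $F$ one has $F^e(\m M)\subseteq \m^{[p^e]}M\subseteq \m^{p^e}M=0$ for $e\gg 0$, whence $\m M=0$; thus every cohomology module of $K$ is a $k$-vector space, on which $F$ is then bijective. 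This already yields the ``in particular'' consequence of the theorem.

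Because $R$ is F-injective and generalized Cohen--Macaulay, it is Buchsbaum \cite{MaFinjectiveBuchsbaum}, so by Schenzel's Theorem~\ref{thm: schenzel} the truncation $\tau^{<d}\mathbf{R}\Gamma_\m R$ comes from $D(k)$, hence splits as $\bigoplus_{i<d}H^i_\m(R)[-i]$. As $0^*_{H^d_\m(R)}[-d]$ also comes from $D(k)$, it remains only to show that the Frobenius-equivariant triangle
$$\tau^{<d}\mathbf{R}\Gamma_\m R\longrightarrow K\longrightarrow 0^*_{H^d_\m(R)}[-d]\xrightarrow{\ \delta\ }\bigl(\tau^{<d}\mathbf{R}\Gamma_\m R\bigr)[1]$$
splits, i.e.\ that the class $\delta\in\bigoplus_{i<d}\Ext^{\,d+1-i}_R\bigl(0^*_{H^d_\m(R)},H^i_\m(R)\bigr)$ vanishes. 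Equivariance of the triangle, together with the bijectivity of $F$ on $0^*_{H^d_\m(R)}$, exhibits $\delta$ as a fixed vector of the induced semilinear Frobenius operator on this Hom-space. Since every cohomology module is a $k$-vector space, each summand is isomorphic to $\Hom_k\bigl(0^*_{H^d_\m(R)},H^i_\m(R)\bigr)\otimes_k\Ext^{\,d+1-i}_R(k,k)$, and on it the Frobenius operator is the tensor product of an invertible operator (conjugation by the bijective Frobenii found above) with the Frobenius pullback $F^\ast$ on $\Ext^{\,\ge 1}_R(k,k)$; the latter is nilpotent in each degree, since $F$ carries $\m$ into $\m^{[p]}\subseteq\m^2$, so $F$ induces $0$ on $\m/\m^2$ and therefore $F^\ast=0$ on $\Ext^1_R(k,k)\cong(\m/\m^2)^\vee$ and on the subalgebra of $\Ext^{\bullet}_R(k,k)$ it generates. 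As $d+1-i\ge 2$, all the relevant Ext groups lie in positive degree, so the operator is nilpotent and the fixed vector $\delta$ must be $0$. Consequently $K\simeq\bigl(\bigoplus_{i<d}H^i_\m(R)[-i]\bigr)\oplus 0^*_{H^d_\m(R)}[-d]$, which comes from $D(k)$.

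The step I expect to demand the most care is the vanishing of $\delta$ --- equivalently, the \emph{formality} of the ${}^*$-truncation, as opposed to the mere fact that $\m$ kills its cohomology; this is the exact analogue of the hard direction of Schenzel's Theorem~\ref{thm: schenzel}, and setting up the Frobenius action on the triangle and on the relevant $\Ext$ groups rigorously (especially over an imperfect residue field, where one must lean on the finite-length-plus-injective-Frobenius dichotomy rather than on literal invertibility of $F^\ast$ at the level of modules) is where the genuine work lies. A structurally cleaner route, probably closer to how \cite{BhattMaSchwedeDualizingComplexFinjectiveDB} proceeds, is to pass to Matlis duals and argue instead on the truncated dualizing complex $\omega_R^\bullet$ and the parameter test submodule of the canonical module $\omega_R$, using that F-injectivity is precisely the surjectivity on cohomology of the Frobenius trace $F_\ast\omega_R^\bullet\to\omega_R^\bullet$.
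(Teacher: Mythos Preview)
First, a meta-point: the paper does not itself prove Theorem~\ref{thm: tight closure truncation}; it is quoted from \cite{BhattMaSchwedeDualizingComplexFinjectiveDB}. What the paper does prove is a strict generalization (Remark~\ref{rmk: tau radical F-inj} shows the hypotheses of Theorem~\ref{thm: tight closure truncation} force $\m\subseteq\tau_{\pa}(R)$, after which $(3)\Rightarrow(4)$ of the Main Theorem gives the conclusion), and its method is entirely different from yours: one passes via the $\Gamma$-construction to an F-finite ring $S$, uses Sharp's uniform test exponent to build $0\to S\to F^e_*S\to C\to 0$, proves $C$ is Buchsbaum, and then produces a map from a truncated shifted Koszul complex on $C$---automatically an object of $D(l)$---to $\tau^{<d,*}\mathbf{R}\Gamma_{\n} S$ that is surjective on each cohomology, concluding by a general lemma about cohomology-surjections from complexes of vector spaces. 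No attempt is made to split the triangle $\tau^{<d}\to\tau^{<d,*}\to 0^*[-d]$ directly.

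Your argument has a real gap precisely at the step you flag. Your first paragraph is fine, and reducing to ``$\delta=0$'' is a reasonable strategy, but the nilpotence of the Frobenius operator on $\Ext_R^{\,d+1-i}(k,k)$ is not established: you argue that $F^\ast$ vanishes on $\Ext^1_R(k,k)$ and hence on the subalgebra it generates, but $\Ext^{\bullet}_R(k,k)$ is essentially never generated in degree~$1$ (that is, roughly, the Koszul property), so nothing follows about $\Ext^{\ge 2}$. There is also a setup problem you acknowledge but do not resolve: the Frobenius is $p$-semilinear, so the ``equivariance'' of the triangle is really an $R$-linear morphism $K\to F_*K$ in $D(R)$, and the compatibility it yields is $(F_*\delta)\circ F_{0^*}=F_{\tau^{<d}}[1]\circ\delta$ as maps into $F_*(\tau^{<d}\mathbf{R}\Gamma_\m R)[1]$, which is not a fixed-point equation for an endomorphism of a single vector space. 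Translating this into your tensor decomposition and identifying the resulting operator as ``invertible~$\otimes$~nilpotent'' requires real work that is not supplied (and over an imperfect $k$ the bijectivity of Frobenius on cohomology does not even give an $R$-linear inverse). As you suspect, the proof in \cite{BhattMaSchwedeDualizingComplexFinjectiveDB} sidesteps this by working on the Matlis-dual side with the Cartier/trace map, where surjectivity on cohomology replaces your splitting argument.
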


We shall see in Remark \ref{rmk: tau radical F-inj} that the equivalence $(3)\Leftrightarrow(4)$ in our Main Theorem should be viewed as a generalization of Theorem \ref{thm: tight closure truncation}. For now, we record the following simple lemma which gives an alternative description of $\tau^{<d, *}\mathbf{R}\Gamma_\m R$.

\begin{lemma}
\label{lem: alternative description}
Let $(R,\m)$ be a reduced and equidmensional excellent local ring of prime characteristic $p > 0$ and dimension $d$. Then we have $\tau^{<d,*}\mathbf{R}\Gamma_\m R\cong \tau^{\leq d}(\mathbf{R}\Gamma_\m(R^+/R)[-1])$ in $D(R)$.
\begin{proof}
The short exact sequence $0\to R\to R^+\to R^+/R\to 0$ induces
$$\mathbf{R}\Gamma_\m(R^+/R)[-1]\to \mathbf{R}\Gamma_\m R\to \mathbf{R}\Gamma_\m R^+\xrightarrow{+1}.$$
Taking cohomology and noting that $H_\m^j(R^+)=0$ for all $j<d$ by Theorem \ref{thm: HochsterHunekebigCM}, we have
$$h^j(\mathbf{R}\Gamma_\m(R^+/R)[-1])=H_\m^j(R) \text{ for all $j<d$, and }$$
$$h^d(\mathbf{R}\Gamma_\m(R^+/R)[-1]) \cong \ker(H_\m^d(R)\to H_\m^d(R^+))\cong 0^*_{H_\m^d(R)}.$$
where the last isomorphism follows from Theorem \ref{thm: SmithPlusClosure}. In particular, the induced map
$$\tau^{\leq d}(\mathbf{R}\Gamma_\m(R^+/R)[-1]) \to \tau^{\leq d}\mathbf{R}\Gamma_\m R \cong \mathbf{R}\Gamma_\m R \to H_\m^d(R)/0^*_{H_\m^d(R)}[-d]$$
vanishes after taking the $d$-th cohomology, and hence is the zero map in $D(R)$ because $\tau^{\leq d}(\mathbf{R}\Gamma_\m(R^+/R)[-1])$ lives in cohomology degree $\leq d$ and $H_\m^d(R)/0^*_{H_\m^d(R)}[-d]$ lives in cohomology degree $d$. Thus we have an induced map
$$\tau^{\leq d}(\mathbf{R}\Gamma_\m(R^+/R)[-1]) \to \tau^{<d, *}\mathbf{R}\Gamma_\m R,$$
which is readily seen to be a quasi-isomorphism.
\end{proof}
\end{lemma}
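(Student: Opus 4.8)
The plan is to extract from the short exact sequence $0\to R\to R^+\to R^+/R\to 0$ the associated triangle in $D(R)$ after applying $\mathbf{R}\Gamma_\m(-)$, namely
$$\mathbf{R}\Gamma_\m(R^+/R)[-1]\to \mathbf{R}\Gamma_\m R\to \mathbf{R}\Gamma_\m R^+\xrightarrow{+1},$$
and then compare it with the defining triangle of $\tau^{<d,*}\mathbf{R}\Gamma_\m R$. The first step is a cohomology computation: since $R^+$ is a big Cohen-Macaulay $R$-algebra by Theorem \ref{thm: HochsterHunekebigCM}, every system of parameters is a regular sequence on $R^+$, so $H_\m^j(R^+)=0$ for all $j<d$. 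Feeding this into the long exact sequence of the triangle above yields $h^j(\mathbf{R}\Gamma_\m(R^+/R)[-1])\cong H_\m^j(R)$ for $j<d$, and in degree $d$ the sequence $0\to h^d(\mathbf{R}\Gamma_\m(R^+/R)[-1])\to H_\m^d(R)\to H_\m^d(R^+)$ identifies $h^d$ with $\ker(H_\m^d(R)\to H_\m^d(R^+))$, which is $0^*_{H_\m^d(R)}$ by Theorem \ref{thm: SmithPlusClosure}. Thus $\tau^{\leq d}(\mathbf{R}\Gamma_\m(R^+/R)[-1])$ has exactly the cohomology $H_\m^i(R)$ for $i<d$ and $0^*_{H_\m^d(R)}$ in degree $d$ — the same cohomology as $\tau^{<d,*}\mathbf{R}\Gamma_\m R$.

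The second step is to promote this cohomological agreement to an honest isomorphism in $D(R)$ by constructing a natural map. Composing the truncation map $\tau^{\leq d}(\mathbf{R}\Gamma_\m(R^+/R)[-1])\to \mathbf{R}\Gamma_\m(R^+/R)[-1]$ with the triangle map to $\mathbf{R}\Gamma_\m R$, and then with $\mathbf{R}\Gamma_\m R\cong \tau^{\leq d}\mathbf{R}\Gamma_\m R \to H_\m^d(R)/0^*_{H_\m^d(R)}[-d]$, gives a morphism from an object living in cohomological degrees $\leq d$ to an object concentrated in degree $d$; on $h^d$ this composite is the composition $0^*_{H_\m^d(R)}\hookrightarrow H_\m^d(R)\twoheadrightarrow H_\m^d(R)/0^*_{H_\m^d(R)}$, which is zero. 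Since the source is in degrees $\leq d$ and the target in degree $d$, a morphism between them in $D(R)$ is determined by its effect on $h^d$ (using $\Hom_{D(R)}(\tau^{\leq d}X, Y[-d]) = \Hom_R(h^d(X), Y)$ for $Y$ an $R$-module), so the composite is the zero map. Therefore the map $\tau^{\leq d}(\mathbf{R}\Gamma_\m(R^+/R)[-1]) \to \mathbf{R}\Gamma_\m R$ factors through the fiber of $\mathbf{R}\Gamma_\m R\to H_\m^d(R)/0^*_{H_\m^d(R)}[-d]$, i.e. through $\tau^{<d,*}\mathbf{R}\Gamma_\m R$, producing the desired morphism $\tau^{\leq d}(\mathbf{R}\Gamma_\m(R^+/R)[-1]) \to \tau^{<d,*}\mathbf{R}\Gamma_\m R$.

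The final step is to check this morphism is a quasi-isomorphism, which follows by comparing cohomology: for $i<d$ both sides have cohomology $H_\m^i(R)$, and the map is the identity there (it is induced, compatibly, from the same triangle map $\mathbf{R}\Gamma_\m(R^+/R)[-1]\to\mathbf{R}\Gamma_\m R$); in degree $d$ both sides are $0^*_{H_\m^d(R)}$, realized as the same kernel $\ker(H_\m^d(R)\to H_\m^d(R^+))$, so again the map is an isomorphism; and in degrees $>d$ both objects vanish. The main (and essentially only) subtlety is the factorization argument in the second step — one must be careful that the vanishing of a map in $D(R)$ between complexes straddling adjacent degrees really is detected on the single overlapping cohomology group, which is where the $t$-structure bookkeeping (amplitude $\leq d$ versus concentration in degree $d$) does the work. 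Everything else is a routine diagram chase through the long exact sequences.
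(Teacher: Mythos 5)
Your proposal is correct and follows essentially the same route as the paper's proof: the same triangle from $0\to R\to R^+\to R^+/R\to 0$, the same cohomology computation via Theorems \ref{thm: HochsterHunekebigCM} and \ref{thm: SmithPlusClosure}, and the same $t$-structure argument showing the composite into $H_\m^d(R)/0^*_{H_\m^d(R)}[-d]$ vanishes in $D(R)$, yielding the factorization through $\tau^{<d,*}\mathbf{R}\Gamma_\m R$. Your explicit justification of the vanishing via $\Hom_{D(R)}(\tau^{\leq d}X, Y[-d])=\Hom_R(h^d(X),Y)$ and the spelled-out quasi-isomorphism check merely make precise what the paper states tersely.
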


\subsection*{F-finite rings and the $\Gamma$-construction}Let $R$ be a Noetherian ring of prime characteristic $p>0$ and let $F^e$: $R \to R, x\mapsto x^{p^e}$ denote the $e$-th iterated Frobenius endomorphism on $R$. To distinguish the source and target of the Frobenius, we adopt the commonly used notation $F^e_*R$ for the target of the Frobenius as a module over the source, that is, $F^e$: $R\to F^e_*R$. Under this notation, elements in $F^e_*R$ are denoted by $F^e_*r$ where $r\in R$, and the $R$-module structure on $F^e_*R$ is defined via $r_1\cdot F^e_*r_2=F^e_*(r_1^{p^e}r_2)$. $R$ is called {\it F-finite} if $F^e_*R$ is a finitely generated $R$-module for some (or equivalently, all) $e>0$. It is well-known that a Noetherian F-finite ring is excellent and is a homomorphic image of an F-finite regular ring, see \cite[Theorem 2.5]{Kunz2}, \cite[Remark 13.6]{Gabbertstructure}, and \cite[Chapter 10]{MaPolstraFsingularitiesBook}.

We will need Hochster--Huneke's $\Gamma$-construction \cite[Section 6]{HochsterHuneke2} to pass from a complete Noetherian local ring to an F-finite local ring. We briefly recall the construction here. Let $(R,\m, k)$ be a complete Noetherian local ring with coefficient field $k$ of characteristic $p>0$. We fix a $p$-base $\Lambda$ of $k$ and let $\Gamma\subseteq \Lambda$ be a cofinite subset. We denote by $k^{\Gamma, e}$ the purely inseparable extension of $k$ by adjoining all $p^e$-th roots of elements in $\Gamma$. Set $$R^{\Gamma, e}:= R\widehat{\otimes}_kk^{\Gamma, e}, \text{ and } R^\Gamma:=\bigcup_e R^{\Gamma, e}.$$ It is readily seen that $R\to R^\Gamma$ is faithfully flat, purely inseparable, and that $\m R^\Gamma$ is the unique maximal ideal of $R^\Gamma$. We will use the following results.

\begin{lemma}[{\cite[Lemma 6.6 and Lemma 6.13]{HochsterHuneke2}}]
\label{lem: gamma construction reduced}
Let $(R,\m)$ be a complete Noetherian local ring of prime characteristic $p > 0$. Then $R^\Gamma$ is F-finite for all cofinite $\Gamma\subseteq \Lambda$. If, in addition, $R$ is reduced, then for all sufficiently small choices of $\Gamma$ cofinal in $\Lambda$, $R^\Gamma$ is reduced.
\end{lemma}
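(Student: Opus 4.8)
The plan is to reconstruct the argument of Hochster--Huneke, reducing the F-finiteness to a formal power series ring over $k$ and the reducedness to a statement in field theory. For F-finiteness, write $R=A/I$ with $A=k[[x_1,\dots,x_n]]$ by Cohen's structure theorem; since the $\Gamma$-construction commutes with the quotient $A\twoheadrightarrow R$, one gets $R^{\Gamma,e}=A^{\Gamma,e}/IA^{\Gamma,e}$ with $A^{\Gamma,e}=k^{\Gamma,e}[[x_1,\dots,x_n]]$, and $R^\Gamma=A^\Gamma/IA^\Gamma$ with $A^\Gamma=\bigcup_eA^{\Gamma,e}$ after passing to the union. As a quotient of an F-finite ring is F-finite, it suffices to treat $R=A$. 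The crucial point is that, although a single $k^{\Gamma,e}$ is usually not F-finite, the union $k^\Gamma$ is: writing $\Lambda\setminus\Gamma=\{\mu_1,\dots,\mu_s\}$ (finite, since $\Gamma$ is cofinite), every $\gamma\in\Gamma$ becomes a $p$-th power in $k^\Gamma$, so $(k^\Gamma)^p\supseteq k^p(\gamma^{1/p^e}:\gamma\in\Gamma,\,e\geq0)$ and hence $k^\Gamma=(k^\Gamma)^p[\mu_1,\dots,\mu_s]$, i.e.\ the $p^s$ monomials $\mu_1^{b_1}\cdots\mu_s^{b_s}$ ($0\leq b_j<p$) span $k^\Gamma$ over $(k^\Gamma)^p$. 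A direct computation then shows that the monomials $\mu^{\mathbf b}x^{\mathbf a}$ ($0\leq b_j,a_i<p$) span $A^\Gamma$ over its subring of $p$-th powers: given $f\in A^\Gamma$, say $f\in k^{\Gamma,e}[[x_1,\dots,x_n]]$, expand $f$ in $x$-monomials of multidegree $<p$ with coefficients in $k^{\Gamma,e}[[x_1^p,\dots,x_n^p]]$ and re-express those coefficients through the $\mu^{\mathbf b}$, noting that the finitely many $p$-th roots required already lie in $k^{\Gamma,e+1}\subseteq A^\Gamma$. Thus $A^\Gamma$, and therefore $R^\Gamma$, is F-finite.

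For the reducedness, suppose $R$ is reduced, with minimal primes $\fp_1,\dots,\fp_r$. Then $R\hookrightarrow\prod_jR/\fp_j$, and $(-)^\Gamma$ — a composite of the flat functors $\widehat{\otimes}_kk^{\Gamma,e}$ with a directed union — preserves injections and finite products, so $R^\Gamma\hookrightarrow\prod_j(R/\fp_j)^\Gamma$. Since a finite intersection of cofinite sets is cofinite, we may assume $R$ is a complete local domain. Choose a module-finite Noether normalization $A=k[[x_1,\dots,x_d]]\hookrightarrow R$; then $R$ is $A$-torsion-free, $A^\Gamma$ is a regular Noetherian domain, and $R^\Gamma\cong R\otimes_AA^\Gamma$ is module-finite and (by flatness of $A\to A^{\Gamma,e}$ and then the union) torsion-free over $A^\Gamma$. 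Hence $R^\Gamma$ embeds into $R^\Gamma\otimes_{A^\Gamma}\operatorname{Frac}(A^\Gamma)\cong L\otimes_KK^\Gamma$, where $K=\operatorname{Frac}(A)$, $L=\operatorname{Frac}(R)$ is a finite extension of $K$, and $K^\Gamma=\operatorname{Frac}(A^\Gamma)$. So everything reduces to showing that $L\otimes_KK^\Gamma$ is reduced for all sufficiently small cofinite $\Gamma$.

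This is pure field theory. Since finite separable base change preserves reducedness, splitting off the separable closure of $K$ in $L$ (which leaves a $p$-basis of the ground field unchanged) reduces us to $L/K$ purely inseparable, and then, inducting on $[L:K]$ by peeling off degree-$p$ layers, to $L=K(a^{1/p})$ with $a\in K\setminus K^p$; here $L\otimes_KK^\Gamma=K^\Gamma[T]/(T^p-a)$ is reduced if and only if $a\notin(K^\Gamma)^p$. Now $\{x_1,\dots,x_d\}\cup\Lambda$ is a $p$-basis of $K$ over $K^p$, and one computes $(K^\Gamma)^p\cap K=K^p(\Gamma)$, the subfield of $K$ generated by $K^p$ and $\Gamma$ (an element $b\in K^\Gamma$ with $b^p\in K$ must already lie in $\operatorname{Frac}(k^{\Gamma,1}[[x_1,\dots,x_d]])$, whence $b^p\in K^p(\Gamma)$). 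Expanding $a$ in this $p$-basis — a finite sum involving only finitely many $\lambda\in\Lambda$ — the hypothesis $a\notin K^p$ forces a nontrivial $x$- or $\lambda$-contribution, so choosing the cofinite set $\Gamma_0$ to exclude those finitely many $\lambda$'s gives $a\notin K^p(\Gamma)=(K^\Gamma)^p$ for every cofinite $\Gamma\subseteq\Gamma_0$. Intersecting the finitely many $\Gamma_0$'s produced over the inseparable layers and over the minimal primes finishes the proof.

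I expect this field-theoretic core to be the main obstacle: identifying $(K^\Gamma)^p$ precisely, running the induction through the inseparable layers (the intermediate fields are no longer literally $\Gamma$-constructions, so the inductive statement has to be phrased via linear disjointness together with the control of inseparability afforded by $p$-bases), and keeping the set of ``bad'' elements of $\Lambda$ finite throughout. Everything else — Cohen's theorem, Noether normalization, the flatness of the completed tensor products, and the behaviour of $(-)^\Gamma$ under quotients, finite products, and injections — is routine bookkeeping.
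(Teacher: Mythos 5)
The paper offers no proof of this lemma---it is quoted verbatim from Hochster--Huneke---so I am judging your reconstruction on its own terms; your overall route (reduce F-finiteness to $A=k[[x_1,\dots,x_n]]$, reduce reducedness through minimal primes and Noether normalization to a generic field-theoretic statement) is indeed the standard one. The F-finiteness half is correct and essentially complete: the identity $k^\Gamma=(k^\Gamma)^p[\mu_1,\dots,\mu_s]$ and the observation that the $p$-th roots needed for \emph{all} coefficients of a fixed $f\in k^{\Gamma,e}[[\underline{x}]]$ already lie in the single field $k^{\Gamma,e+1}$ are exactly the points that need care, and you address them. The reduction of reducedness to showing $L\otimes_K\operatorname{Frac}(A^\Gamma)$ is reduced is also sound.

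The gap is in the field-theoretic core, precisely where you predicted trouble. When $[k:k^p]=\infty$, the set $\{x_1,\dots,x_d\}\cup\Lambda$ is \emph{not} a $p$-basis of $K=\operatorname{Frac}(k[[x_1,\dots,x_d]])$: a series such as $\sum_i\lambda_i x_1^{pi}$ with infinitely many distinct $\lambda_i\in\Lambda$ does not lie in $K^p(\Lambda\cup\{x_j\})$, because that field is a directed union of finite extensions of $K^p$ and each of its elements involves only finitely many $\lambda$'s. For the same reason your identification $(K^\Gamma)^p\cap K=K^p(\Gamma)$ fails: $\sum_i\gamma_i x_1^{pi}=\bigl(\sum_i\gamma_i^{1/p}x_1^i\bigr)^p$ with distinct $\gamma_i\in\Gamma$ lies in $(K^\Gamma)^p\cap K$ but not in $K^p(\Gamma)$. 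The correct description (using normality of $A$ and of $A^\Gamma$) is $(K^\Gamma)^p\cap A=k^p(\Gamma)[[x_1^p,\dots,x_d^p]]$, coefficients in the field $k^p(\Gamma)$ with no uniform finiteness across coefficients, so the step ``expand $a$ in this $p$-basis --- a finite sum involving only finitely many $\lambda$'' does not go through. The conclusion you need is still true, but the finite bad set must be extracted coefficientwise: after multiplying by a $p$-th power assume $a\in A\setminus K^p$; either $a\notin k[[x_1^p,\dots,x_d^p]]$, in which case $a\notin(K^\Gamma)^p$ for \emph{every} $\Gamma$, or $a=\sum_c a_cx^{pc}$ with some single coefficient $a_{c_0}\in k\setminus k^p$, and expanding that one element of $k$ in the $p$-basis $\Lambda$ (a genuinely finite expansion) yields a $\lambda_0$ whose exclusion from $\Gamma$ forces $a_{c_0}\notin k^p(\Gamma)$, hence $a\notin(K^\Gamma)^p$. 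With this repair---together with the linear-disjointness bookkeeping you yourself flag as still needed to push through the separable layer and the higher purely inseparable layers---the argument closes; as written, the heart of the reducedness proof rests on two false identities.
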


\begin{lemma}[{\cite[Lemma 4.3 and Lemma 4.4]{PhamHungQuyTightClosureParameterIdeal}}]
\label{lem: gamma construction F-rat}
Let $(R,\m)$ be an equidimensional complete Noetherian local ring of prime characteristic $p > 0$ and dimension $d$ such that $R_{\fp}$ is F-rational for all $\fp\in\Spec(R)\backslash\{\m\}$. Then for all sufficiently small choices of $\Gamma$ cofinal in $\Lambda$, $(\fq R^\Gamma)^*=\fq^*R^\Gamma$ for all ideals $\fq$ that are generated by a system of parameters, and that $0^*_{H_\m^d(R^\Gamma)}=0^*_{H_\m^d(R)}\otimes_RR^\Gamma$.
\end{lemma}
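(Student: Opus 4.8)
The plan is to deal first with the structural properties of $R^\Gamma$, then reduce (1) to (2), and finally prove (2), whose inclusion ``$\subseteq$'' carries all the content.

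\emph{Preliminaries on $R^\Gamma$.} We may assume $R$ is reduced (tight closure and $0^*$ are insensitive to nilpotents here, and the hypotheses already force $R$ reduced away from $\m$). By Lemma \ref{lem: gamma construction reduced}, for small cofinal $\Gamma$ the ring $R^\Gamma$ is reduced and F-finite, and $R\to R^\Gamma$ is faithfully flat, local, and \emph{purely inseparable}; in fact each $R^{\Gamma,e}$ satisfies $(R^{\Gamma,e})^{p^e}\subseteq R$. Flat base change gives $H^i_\m(R^\Gamma)\cong H^i_\m(R)\otimes_R R^\Gamma$ for all $i$, so $R^\Gamma$ is equidimensional of dimension $d$, generalized Cohen--Macaulay, and (inspecting the regular fibers of $R\to R^\Gamma$, shrinking $\Gamma$ as in Hochster--Huneke) still F-rational on its punctured spectrum; consequently a test element $c\in R^\circ$ of $R$ with $R_c$ regular remains a test element of $R^\Gamma$ and of each $R^{\Gamma,e}$, since regularity descends along the faithfully flat maps $R^{\Gamma,e}\to R^\Gamma$. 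Moreover, F-rationality on the punctured spectrum makes $\tau_{\pa}(R)$ an $\m$-primary ideal, so $0^*_{H^d_\m(R)}$ (being killed by $\tau_{\pa}(R)$) has finite length; likewise $0^*_{H^d_\m(R^\Gamma)}$.

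\emph{Reduction of (1) to (2).} The inclusion $\fq^*R^\Gamma\subseteq(\fq R^\Gamma)^*$ is automatic, a witness $c\in R^\circ$ for $x\in\fq^*$ still lying in $(R^\Gamma)^\circ$ (minimal primes of $R^\Gamma$ contract to minimal primes of $R$). For the reverse I would use $\fq\subseteq\fq^{\lim}\subseteq\fq^*$: limit closure is the preimage of $\ker(R/\fq\to H^d_\m(R))$, which commutes with the flat base change, so $(\fq R^\Gamma)^{\lim}=\fq^{\lim}R^\Gamma$; and a short manipulation with the test element $c$ and the colon-capturing property of tight closure identifies $\fq^*/\fq^{\lim}$ with the intersection $(R/\fq^{\lim})\cap 0^*_{H^d_\m(R)}$ inside $H^d_\m(R)$, and similarly over $R^\Gamma$. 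Since $-\otimes_R R^\Gamma$ is exact it commutes with intersections of submodules, so granting (2) one gets $(\fq R^\Gamma)^*/(\fq R^\Gamma)^{\lim}\cong(\fq^*/\fq^{\lim})\otimes_R R^\Gamma$, which together with $(\fq R^\Gamma)^{\lim}=\fq^{\lim}R^\Gamma$ yields $(\fq R^\Gamma)^*=\fq^*R^\Gamma$.

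\emph{Part (2).} The inclusion $0^*_{H^d_\m(R)}\otimes_R R^\Gamma\subseteq 0^*_{H^d_\m(R^\Gamma)}$ is clear from compatibility of Frobenius with base change. For the reverse, let $\xi\in 0^*_{H^d_\m(R^\Gamma)}$. As $H^d_\m(R^\Gamma)=\varinjlim_e\bigl(H^d_\m(R)\otimes_R R^{\Gamma,e}\bigr)$ with injective transition maps, $\xi$ lies in $H^d_\m(R^{\Gamma,e})$ for some $e$, in fact in $0^*_{H^d_\m(R^{\Gamma,e})}$, so it suffices to prove $0^*_{H^d_\m(R^{\Gamma,e})}=0^*_{H^d_\m(R)}\otimes_R R^{\Gamma,e}$ for each fixed $e$. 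Here the crucial extra structure is $(R^{\Gamma,e})^{p^e}\subseteq R$: the $e$-th Frobenius then carries $H^d_\m(R^{\Gamma,e})$ back into the submodule $H^d_\m(R)$, and a chase shows $F^e(\xi)\in 0^*_{H^d_\m(R)}$. Thus everything reduces to showing that the $e$-th Frobenius is injective on
\[
H^d_\m(R^{\Gamma,e})\big/\bigl(0^*_{H^d_\m(R)}\otimes_R R^{\Gamma,e}\bigr)=\bigl(H^d_\m(R)/0^*_{H^d_\m(R)}\bigr)\otimes_R R^{\Gamma,e}.
\]
Now $H^d_\m(R)/0^*_{H^d_\m(R)}$ is itself F-injective: by Theorem \ref{thm: SmithPlusClosure} it embeds Frobenius-equivariantly into $H^d_\m(R^+)$, and $R^+$ is perfect, so (using that $R^+$ is a big Cohen--Macaulay algebra, Theorem \ref{thm: HochsterHunekebigCM}) Frobenius is injective on $H^d_\m(R^+)$ and on all of its submodules.

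\emph{The main obstacle.} The hard part will be to propagate this F-injectivity through the flat, purely inseparable base change $R\to R^{\Gamma,e}$ — that is, to show the base-changed module $\bigl(H^d_\m(R)/0^*_{H^d_\m(R)}\bigr)\otimes_R R^{\Gamma,e}$ is still F-injective, equivalently that $0^*$ of the base change does not grow. I expect the finite length of $0^*_{H^d_\m(R)}$ (the $\m$-primariness of $\tau_{\pa}(R)$) to be essential here, letting one argue on the finite-length ``exceptional'' part; an alternative is to compare both sides inside $H^d_\m(R^+)=H^d_\m(R^{\Gamma,e})\otimes_{R^{\Gamma,e}}R^+$ using $(R^{\Gamma,e})^+=R^+$ (valid since $R^{\Gamma,e}\subseteq R^+$ and $R^+$ is integral over $R$), where one checks that the kernel of the multiplication $R^+\otimes_R R^{\Gamma,e}\to R^+$ is a nil ideal by pure inseparability. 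Everything else is routine bookkeeping with flatness, the $\Gamma$-construction, and standard tight closure facts.
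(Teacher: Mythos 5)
Your preliminary reductions are sound: the easy inclusions, the passage from the first assertion to the second via $\fq^{\lim}$ (flat base change does commute with the finite intersection $(R/\fq^{\lim})\cap 0^*_{H^d_\m(R)}$ inside $H^d_\m(R)$, and the identification of that intersection with $\fq^*/\fq^{\lim}$ via a fixed test element works over both $R$ and $R^\Gamma$), and the observation that $F^e$ carries $H^d_\m(R^{\Gamma,e})$ into the image of $H^d_\m(R)$, reducing everything to injectivity of $F^e$ on $\bigl(H^d_\m(R)/0^*_{H^d_\m(R)}\bigr)\otimes_R R^{\Gamma,e}$. But that last statement is not ``routine bookkeeping'' --- it \emph{is} the lemma, and your proposal stops exactly there. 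F-injectivity of a module is not in general preserved under flat purely inseparable base change; this is precisely why the statement (like all results of Enescu--Hochster type on the $\Gamma$-construction) is only asserted for \emph{sufficiently small} $\Gamma$. Up to this point your argument has used the smallness of $\Gamma$ only to make $R^\Gamma$ reduced, so no formal or functorial completion of your outline can succeed: the missing step is exactly where one must use F-rationality on the punctured spectrum (equivalently, the finite length of $0^*_{H^d_\m(R)}$) together with a stabilization argument to exclude the finitely many elements of the $p$-base that make $0^*$ grow.

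Neither of your two suggested escape routes closes this. The first (``propagate F-injectivity through the base change'') is a restatement of what must be proved. The second rests on $(R^{\Gamma,e})^+=R^+$ and the nil kernel of $R^+\otimes_R R^{\Gamma,e}\to R^+$; but a surjection of rings with nilpotent kernel need not induce an injection on $H^d_\m$, and the statement you actually need --- injectivity of the multiplication map $\bigl(H^d_\m(R)/0^*_{H^d_\m(R)}\bigr)\otimes_R R^{\Gamma,e}\to H^d_\m(R^+)$ --- is not a consequence of flatness of $R\to R^{\Gamma,e}$, because the target $H^d_\m(R^+)$ is not being base-changed. For comparison, the paper does not prove this lemma at all but quotes Lemmas 4.3 and 4.4 of the cited reference, where the argument proceeds by first showing that $R^\Gamma$ remains F-rational on the punctured spectrum for all sufficiently small $\Gamma$ and then deducing the two displayed equalities; your proposal would need an input of that kind at the step you have flagged as ``the main obstacle.''
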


\section{Tight closure and limit closure}

The goal of this section is to explain $(1)\Leftrightarrow(2)$ in the Main Theorem. We deduce it using a series of results on limit closure and the following result of Goto--Nakamura \cite{GotoNakamuraBoundofDifference}, which can be viewed as a tight closure analog of the generalized Cohen-Macaulay property (this theorem also partially motivates our work in this article, as mentioned earlier).

\begin{theorem}[{\cite[Theorem 1.1]{GotoNakamuraBoundofDifference}}]
\label{thm: GotoNakamura}
Let $(R,\m)$ be an equidimensional excellent local ring of prime characteristic $p>0$ and dimension $d$. Then the following conditions are equivalent.
\begin{enumerate}[(1)]
  \item $\sup\{\length(\fq^*/\fq)\}<\infty$, where $\fq$ runs over all ideals generated by a system of parameters.
  \item $R_{\fp}$ is F-rational for every $\fp\in\Spec(R)\backslash\{\m\}$.
  \item $R$ is generalized Cohen-Macaulay and $0^*_{H_\m^d(R)}$ has finite length.
\end{enumerate}
Moreover, when this is the case, we have that $\sup\{e(\fq, R)-\length(R/\fq^*)\}<\infty$,\footnote{In fact, when $R$ is unmixed, this is equivalent to $(1)-(3)$ of the theorem, see Remark \ref{rmk: pseudo generalized CM} (3).} where $\fq$ runs over all ideals generated by a system of parameters.
\end{theorem}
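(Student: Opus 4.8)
My plan is to prove the two equivalences $(1)\Leftrightarrow(3)$ and $(2)\Leftrightarrow(3)$ and to settle the closing ``moreover'' in one line: if (1) holds then, using $\length(R/\fq^*)=\length(R/\fq)-\length(\fq^*/\fq)$,
$$e(\fq,R)-\length(R/\fq^*)=\length(\fq^*/\fq)-\bigl(\length(R/\fq)-e(\fq,R)\bigr)\le\length(\fq^*/\fq),$$
since $\length(R/\fq)\ge e(\fq,R)$ always, so the left side is uniformly bounded.

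The engine for $(1)\Leftrightarrow(3)$ is a length decomposition of $\fq^*/\fq$ through the \emph{limit closure} $\fq^{\lim}$ appearing in Theorem~\ref{thm: Buchsbaum theory}(2). By definition $\fq^{\lim}/\fq=\ker\bigl(R/\fq\to H_\m^d(R)\bigr)$, and one has $\length(\fq^{\lim}/\fq)=\length(R/\fq)-e(\fq,R)$ for every parameter ideal (a standard property of limit closure); hence, by Remark~\ref{rmk: generalized CM}(2), $\sup_\fq\length(\fq^{\lim}/\fq)<\infty$ if and only if $R$ is generalized Cohen-Macaulay. On the other hand, using $\fq^*=\fq^+$ and that the parameters form a regular sequence on the big Cohen-Macaulay algebra $R^+$ (Theorems~\ref{thm: SmithPlusClosure} and~\ref{thm: HochsterHunekebigCM}), the map $R/\fq\to H_\m^d(R)$ carries $\fq^*/\fq$ into $0^*_{H_\m^d(R)}$ with kernel exactly $\fq^{\lim}/\fq$; thus $\fq^*/\fq^{\lim}$ embeds into $0^*_{H_\m^d(R)}$, and letting $\fq$ run through $(x_1^n,\dots,x_d^n)$ and invoking~\eqref{eqn: top lc} shows these submodules exhaust $0^*_{H_\m^d(R)}$, so $\sup_\fq\length(\fq^*/\fq^{\lim})=\length\bigl(0^*_{H_\m^d(R)}\bigr)$. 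Since $\length(\fq^*/\fq)=\length(\fq^{\lim}/\fq)+\length(\fq^*/\fq^{\lim})$ is a sum of nonnegative quantities, $\sup_\fq\length(\fq^*/\fq)<\infty$ if and only if both $\sup_\fq\length(\fq^{\lim}/\fq)<\infty$ and $\length(0^*_{H_\m^d(R)})<\infty$, which is precisely (3).

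For $(2)\Leftrightarrow(3)$: if $R_\fp$ is F-rational for all $\fp\ne\m$, then each such $R_\fp$ is Cohen-Macaulay (since $R$ is excellent), so $R$ is generalized Cohen-Macaulay by Remark~\ref{rmk: generalized CM}(1); moreover each $R_\fp$ is reduced, so the nilradical of $R$ lies in $\Gamma_\m(R)$ and hence has dimension $0<d$, so replacing $R$ by $\red{R}$ changes neither $H_\m^d(R)$ nor $0^*_{H_\m^d(R)}$ and we may assume $R$ reduced; then $R$ has a parameter test element, and F-rationality of $R_\fp$ for all $\fp\ne\m$ forces $\tau_{\pa}(R)\not\subseteq\fp$ for every $\fp\ne\m$ (via a localization property of the parameter test ideal), so $\tau_{\pa}(R)$ is $\m$-primary; since $\tau_{\pa}(R)$ annihilates the Artinian module $0^*_{H_\m^d(R)}$, the latter has finite length. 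Conversely, if $R$ is generalized Cohen-Macaulay it is a homomorphic image of a Cohen-Macaulay ring, so $R_\fp$ is Cohen-Macaulay of dimension $d-\dim(R/\fp)$ for all $\fp\ne\m$ (Remark~\ref{rmk: generalized CM}(1)); if moreover $0^*_{H_\m^d(R)}$ has finite length it is supported only at $\m$, and one descends this to get $0^*_{H_{\fp R_\fp}^{\dim R_\fp}(R_\fp)}=0$, so the Cohen-Macaulay ring $R_\fp$ is F-rational.

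I expect this last descent to be the main obstacle: a full system of parameters of $R$ has more elements than a system of parameters of $R_\fp$, so one cannot simply localize~\eqref{eqn: top lc}. I would handle it by choosing a system of parameters $x_1,\dots,x_d$ of $R$ whose first $\dim R_\fp$ members restrict to a system of parameters of $R_\fp$ while the rest are units there, and then transferring tight-closure membership relations up to $R$ and back, using the colon-capturing property of tight closure for parameter ideals --- concretely the identification $\fq^*=\fq^+$ with the $x_i$ a regular sequence on $R^+$, together with $(R_\fp)^+=(R^+)_\fp$ --- and a test element of $R$ whose image remains a test element for $R_\fp$. The associated-prime bookkeeping needed to make this descent precise (and the non-reduced case, again reduced to $\red{R}$), together with the input used above that $\sup_\fq\length(\fq^{\lim}/\fq)<\infty$ exactly when $R$ is generalized Cohen-Macaulay (a limit-closure counterpart of the Cuong--Schenzel--Trung criterion), are where the genuine work lies.
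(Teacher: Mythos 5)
First, a point of reference: the paper does not prove this statement at all --- it is quoted verbatim from Goto--Nakamura \cite{GotoNakamuraBoundofDifference} and used as a black box --- so your sketch can only be judged on its own terms. Doing so, the main error is the identity $\length(\fq^{\lim}/\fq)=\length(R/\fq)-e(\fq,R)$, i.e.\ $\length(R/\fq^{\lim})=e(\fq,R)$. This is false in general: only the chain $\length(R/\fq)\geq e(\fq,R)\geq\length(R/\fq^{\lim})$ holds (Remark \ref{rmk: lim closure tight closure}), and the defect $e(\fq,R)-\length(R/\fq^{\lim})$ is exactly the nontrivial ``pseudo'' invariant of Remark \ref{rmk: pseudo generalized CM}; for a standard system of parameters it equals $\sum_{i=0}^{d-1}\binom{d-1}{i-1}\length(H_\m^i(R))$, which is nonzero for any non-Cohen--Macaulay generalized Cohen--Macaulay ring (compare also the coefficient $\binom{d}{i}$ in Lemma \ref{lem: length of q^lim/q} with $\binom{d-1}{i}$ in Remark \ref{rmk: generalized CM}(3); for $R=k[[x,y,z,w]]/((x,y)\cap(z,w))$ one gets $2$ versus $1$). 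The equivalence you extract from it --- $\sup_\fq\length(\fq^{\lim}/\fq)<\infty$ iff $R$ is generalized Cohen--Macaulay --- is true, but for different reasons: the inequality $\length(\fq^{\lim}/\fq)\geq\length(R/\fq)-e(\fq,R)$ together with Remark \ref{rmk: generalized CM}(2) gives one direction, and Lemma \ref{lem: length of q^lim/q} (which rests on the Cuong--Hoa--Loan bound for $e(\fq,R)-\length(R/\fq^{\lim})$, not its vanishing) gives the other. With that repair, your decomposition $\length(\fq^*/\fq)=\length(\fq^{\lim}/\fq)+\length(\fq^*/\fq^{\lim})$ together with \eqref{eqn: top lc lim closure} does yield $(1)\Leftrightarrow(3)$, and the one-line deduction of the ``moreover'' from $(1)$ is correct.

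The second gap is the one you flag yourself, and it is the actual core of Goto--Nakamura's theorem: the descent in $(3)\Rightarrow(2)$. Knowing $u/1\in\bigl((x_1,\dots,x_h)R_{\fp}\bigr)^*$ and passing through plus closure gives, at best, $su\in(x_1,\dots,x_h)R^+\cap R\subseteq(x_1,\dots,x_h,x_{h+1}^n,\dots,x_d^n)^*$ for some $s\notin\fp$ and all $n$; to conclude $u/1\in(x_1,\dots,x_h)R_{\fp}$ one must control $\bigcap_n(x_1,\dots,x_h,x_{h+1}^n,\dots,x_d^n)^*$ using the finiteness of $\length(0^*_{H_\m^d(R)})$, which requires comparing the relative tight closure $0^{*_R}_{H^{d-h}_{\m}(R/(x_1,\dots,x_h))}$ of Remark \ref{rmk: relative tight closure in top LC} with $0^*_{H_\m^d(R)}$ --- essentially the content of Lemma \ref{lem: key lemma} or of Goto--Nakamura's original argument. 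None of this is supplied, and the plus-closure route also needs care outside the domain case (the paper's $R^+$ is a product over minimal primes, so one must first reduce to $\red{R}$ and work componentwise). Similarly, in $(2)\Rightarrow(3)$ the ``localization property'' making $\tau_{\pa}(R)$ $\m$-primary is itself a nontrivial theorem (the paper cites \cite[Theorem 6.8]{KatzmanMurruVelezZhangGlobalParmeterTestIdeal} for it in Remark \ref{rmk: tau radical F-inj}). So the proposal is a plausible outline whose easy half contains a genuine error and whose hard half is not carried out.
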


\subsection*{Limit closure}Let $(R,\m)$ be a Noetherian local ring of dimension $d$ and let $x_1, \ldots, x_d$ be a system of parameters. The {\it limit closure} of $\fq:=(x_1, \ldots, x_d)$ in $R$ is defined as
\[
\fq^{\lim} := \bigcup_{n\geq 0} \left((x_1^{n+1}, \ldots, x_d^{n + 1}):_R (x_1\cdots x_d)^n\right).
\]
We note that $\fq^{\lim}/\fq$ is the kernel of the natural map $R/\fq \to H_\m^d(R)$. In particular, $\fq^{\lim}$ is well-defined: it does not depend on the choice of its generators $x_1,\dots,x_d$. Moreover, by (\ref{eqn: top lc}), we know that
\begin{equation}\label{eqn: top lc lim closure}
0^*_{H^d_m(R)} \cong \varinjlim_n \frac{(x_1^n, \ldots, x_d^n)^*}{(x_1^n, \ldots, x_d^n)^{\lim}}\cong \varinjlim_e \frac{(\fq^{[p^e]})^*}{(\fq^{[p^e]})^{\lim}}.
\end{equation}
where the transition maps in each direct limit system are all injective.

\begin{remark}
\label{rmk: lim closure tight closure}
Let $(R,\m)$ be an equidimensional excellent local ring of prime characteristic $p>0$ and let $\fq\subseteq R$ be an ideal that is generated by a system of parameters. Then we have $\fq^{\lim}\subseteq \fq^*$, see \cite[Theorem 2.3]{HunekeSixLecutresCA}. It follows that we have inequalities:
$$\length(R/\fq)\geq e(\fq, R)\geq \length(R/\fq^{\lim})\geq \length(R/\fq^*),$$
where the second inequality follows from \cite[Lemma 2.3]{CuongHoaLoan}, see also \cite[Theorem 9]{MaPhamSmirnovColengthMultiplicity} (note that the first two inequalities above hold for any Noetherian local ring). Moreover, under mild assumptions on $R$, certain equalities in the above chain of inequalities characterize $R$ being Cohen-Macaulay or F-rational. We refer the reader to \cite{MaPhamSmirnovColengthMultiplicity} (and the reference therein) for details towards this direction.
\end{remark}

The following lemma is well-known to experts (it is implicit in \cite{GotoOntheAssociatedGradedRingsofBuchsbaumRings} and \cite{CuongHoaLoan}). Since we could not find a good reference, we include a short argument.
\begin{lemma}
\label{lem: length of q^lim/q}
Let $(R,\m)$ be a generalized Cohen-Macaulay ring of dimension $d$. Then for every ideal $\fq$ generated by a system of parameters, we have
\begin{equation}\label{eqn: q^lim/q}
\length(\fq^{\lim}/\fq)\leq \sum_{i=0}^{d-1}\binom{d}{i}\length(H_\m^i(R)),
\end{equation}
with equality if and only if $\fq$ is standard.
\end{lemma}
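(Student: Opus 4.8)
\emph{The plan} is to induct on $d=\dim R$, using throughout the identification $\fq^{\lim}/\fq=\ker\!\big(R/\fq\xrightarrow{\ \phi\ }H_\m^d(R)\big)$, so that $\length(\fq^{\lim}/\fq)=\length(R/\fq)-\length(R/\fq^{\lim})$. For $d=1$, write $\fq=(x_1)$; since $x_1$ is a parameter of the equidimensional ring $R$, any $r$ with $x_1r=0$ is killed by a power of $\m$, so $(0:_Rx_1^n)\subseteq H_\m^0(R)$ for all $n$, and conversely $H_\m^0(R)\subseteq\bigcup_n(0:_Rx_1^n)$; hence $\fq^{\lim}=\fq+H_\m^0(R)$ and $\fq^{\lim}/\fq\cong H_\m^0(R)/\big(H_\m^0(R)\cap\fq\big)$. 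Its length is $\le\length(H_\m^0(R))=\binom{1}{0}\length(H_\m^0(R))$, with equality iff $H_\m^0(R)\cap\fq=0$; a short argument (using once more that $x_1$ is a parameter, so $x_1r\in H_\m^0(R)$ already forces $r\in H_\m^0(R)$) identifies this with $x_1H_\m^0(R)=0$, i.e.\ $(0:_Rx_1)=H_\m^0(R)$, and since $\length(R/\fq)-e(\fq,R)=\length(0:_Rx_1)$ in dimension one, this is precisely standardness of $\fq$.

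For $d\ge 2$, I would set $R':=R/x_1R$ and $\fq':=\fq R'=(x_2,\dots,x_d)R'$. As $R$ is equidimensional, $x_1$ avoids every minimal prime, so $(0:_Rx_1)\subseteq H_\m^0(R)$ has finite length; applying $\mathbf{R}\Gamma_\m$ to $0\to R/(0:_Rx_1)\xrightarrow{\ x_1\ }R\to R'\to 0$ then yields short exact sequences
\[
0\to H_\m^i(R)/x_1H_\m^i(R)\to H_\m^i(R')\to\big(0:_{H_\m^{i+1}(R)}x_1\big)\to 0\qquad(0\le i\le d-1),
\]
whence $H_\m^i(R')$ has finite length for $i<d-1$ (so $R'$ is again generalized Cohen--Macaulay of dimension $d-1$), together with a connecting map $\delta:H_\m^{d-1}(R')\to H_\m^d(R)$ whose kernel is the image of the natural map $H_\m^{d-1}(R)\to H_\m^{d-1}(R')$. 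The key point is that $\phi$ factors as $R/\fq=R'/\fq'\xrightarrow{\ \phi'\ }H_\m^{d-1}(R')\xrightarrow{\ \delta\ }H_\m^d(R)$, with $\phi'$ the natural map for $R'$ and $x_2,\dots,x_d$, so $\ker\phi=(\phi')^{-1}(\ker\delta)$ gives
\[
\length(\fq^{\lim}/\fq)=\length\big((\fq')^{\lim}/\fq'\big)+\length\big(\im\phi'\cap\ker\delta\big).
\]

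Bounding the first summand by the induction hypothesis, the second by $\length(\ker\delta)\le\length(H_\m^{d-1}(R))$, and each $\length(H_\m^i(R'))\le\length(H_\m^i(R))+\length(H_\m^{i+1}(R))$ from the displayed sequences, Pascal's identity $\binom{d-1}{i}+\binom{d-1}{i-1}=\binom{d}{i}$ collapses the estimate to $\length(\fq^{\lim}/\fq)\le\sum_{i=0}^{d-1}\binom{d}{i}\length(H_\m^i(R))$, which is \eqref{eqn: q^lim/q}. For the equality clause, equality above forces $\fq'$ standard in $R'$, $x_1H_\m^i(R)=0$ for all $i<d$, and $\ker\delta\subseteq\im\phi'$; conversely, standardness of $\fq$ means $\fq\,H_\m^i\big(R/(x_1,\dots,x_j)R\big)=0$ for $i+j<d$ (the Cuong--Schenzel--Trung characterization of standard systems of parameters, \cite{CuongSchenzelTrung,TrungGeneralizedCM}), which gives $x_1H_\m^i(R)=0$ for $i<d$ and $\fq'$ standard in $R'$, so by induction it remains only to see $\ker\delta\subseteq\im\phi'$; and in the other direction $\ker\delta\subseteq\im\phi'$ forces $\fq'H_\m^{d-1}(R)=0$, which together with the conditions already in hand upgrades ``$x_1H_\m^i(R)=0$ for $i<d$'' and ``$\fq'$ standard'' back to the full system of vanishings characterizing standardness of $\fq$. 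Thus equality in \eqref{eqn: q^lim/q} holds exactly when $\fq$ is standard.

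\emph{The hard part} is the inclusion $\ker\delta\subseteq\im\phi'$ in the equality analysis; everything else is bookkeeping once the reduction modulo $x_1$ and the long exact sequence of local cohomology are in place. Equivalently, one must show that the finite-length submodule $\im\!\big(H_\m^{d-1}(R)\to H_\m^{d-1}(R')\big)$ --- which is killed by $\fq$ --- lies in the image of the natural map $R'/\fq'\to H_\m^{d-1}(R')$ attached to the standard system $x_2,\dots,x_d$ on the generalized Cohen--Macaulay ring $R'$, and I expect this to come out of the theory of standard systems of parameters combined with the Matlis-dual description $H_\m^{d-1}(R')^{\vee}=\omega_{R'}$. (An alternative organization avoiding the top cohomology of $R'$: split $\length(\fq^{\lim}/\fq)=\big(\length(R/\fq)-e(\fq,R)\big)+\big(e(\fq,R)-\length(R/\fq^{\lim})\big)$, bound the first summand by Remark \ref{rmk: generalized CM}(3) and the second by $\sum_{i=1}^{d-1}\binom{d-1}{i-1}\length(H_\m^i(R))$ via the same kind of induction, then add using Pascal; the subtle equality point is then located entirely in the second bound.)
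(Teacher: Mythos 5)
Your inductive argument for the inequality \eqref{eqn: q^lim/q} is a genuinely different and largely self-contained route: the paper instead simply writes $\length(\fq^{\lim}/\fq)=(\length(R/\fq)-e(\fq,R))+(e(\fq,R)-\length(R/\fq^{\lim}))$, bounds the first summand by Remark \ref{rmk: generalized CM} (3) and the second by results quoted from Cuong--Hoa--Loan and Sharp--Hamieh, and adds with Pascal's identity --- this is exactly the ``alternative organization'' you mention in your final parenthesis. However, the equality clause of your main argument has a genuine gap, and it is exactly the one you flag yourself: the implication ``$\fq$ standard $\Rightarrow$ equality'' requires $\ker\delta\subseteq\im\phi'$, i.e., that the image of $H_\m^{d-1}(R)$ in $H_\m^{d-1}(R')$ lies in the image of the natural map $R'/\fq'\to H_\m^{d-1}(R')$, and you only say you ``expect'' this to come out of the theory of standard systems of parameters together with Matlis duality. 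This is not bookkeeping; it is the entire nontrivial content of the ``if'' half of the equivalence, and it is precisely what the paper outsources to \cite[Theorem 5.1]{CuongHoaLoan} and \cite[Theorem 3.7]{SharpHamiehLengthsGeneralizedFractions} (equivalently, that a standard $\fq$ attains $e(\fq,R)-\length(R/\fq^{\lim})=\sum_{i=0}^{d-1}\binom{d-1}{i-1}\length(H_\m^i(R))$). Without a proof or citation for this step, only one direction of ``equality iff standard'' is established.

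By contrast, the converse direction (equality $\Rightarrow$ standard) is much cheaper in the paper's organization than in yours: equality in \eqref{eqn: q^lim/q} forces equality in the bound for $\length(R/\fq)-e(\fq,R)$, and that equality \emph{is} the definition of a standard system of parameters, so no reduction modulo $x_1$ and no Cuong--Schenzel--Trung vanishing criterion are needed. Your reassembly of standardness from ``$x_1H_\m^i(R)=0$ for $i<d$, $\fq'$ standard in $R'$, and $\fq H_\m^{d-1}(R)=0$'' does appear to go through, but it is delicate and only sketched. I would recommend either supplying an actual proof of $\ker\delta\subseteq\im\phi'$ for standard $\fq$, or switching to the decomposition through $e(\fq,R)$ and citing the known length formula for $R/\fq^{\lim}$ when $\fq$ is standard, as the paper does.
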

\begin{proof}
We have $\length(\fq^{\lim}/\fq)=\length(R/\fq)-e(\fq, R)+ e(\fq, R)-\length(R/\fq^{\lim})$. By Remark \ref{rmk: generalized CM} (3),
\begin{equation}\label{eqn: l(q)-e(q)}
\length(R/\fq)-e(\fq, R)\leq \sum_{i=0}^{d-1}\binom{d-1}{i}\length(H_\m^i(R)).
\end{equation}
By \cite[Corollary 4.3]{CuongHoaLoan} and \cite[Theorem 3.7]{SharpHamiehLengthsGeneralizedFractions}, we have
\begin{equation}
\label{eqn: e(q)-q^lim}
e(\fq, R)-\length(R/\fq^{\lim})\leq \sum_{i=0}^{d-1}\binom{d-1}{i-1}\length(H_\m^i(R))
\end{equation}
and equality holds when $\fq$ is standard by \cite[Theorem 5.1]{CuongHoaLoan}. Putting (\ref{eqn: l(q)-e(q)}) and (\ref{eqn: e(q)-q^lim}) together we obtain (\ref{eqn: q^lim/q}). If equality occurs in (\ref{eqn: q^lim/q}), then equality also occurs in (\ref{eqn: l(q)-e(q)}) and thus $\fq$ is standard by definition. Finally, if $\fq$ is standard then we have equality in both (\ref{eqn: l(q)-e(q)}) and (\ref{eqn: e(q)-q^lim}) and thus we have equality in (\ref{eqn: q^lim/q}) as well.
\end{proof}

\begin{remark}
\label{rmk: limit closure Buchsbaum}
Let $(R,\m)$ be a Noetherian local ring and let $\fq$ denote an ideal generated by a system of parameters. If $\length(\fq^{\lim}/\fq)$ is independent of $\fq$, then as $$\length(\fq^{\lim}/\fq) = (\length(R/\fq)-e(\fq, R))+(e(\fq, R)-\length(R/\fq^{\lim}))\geq \length(R/\fq)-e(\fq, R),$$ we know that $\{\length(R/\fq)-e(\fq, R)\}$ is bounded (where $\fq$ runs over all ideals generated by a system of parameters). Thus $R$ is generalized Cohen-Macaulay by Remark \ref{rmk: generalized CM} (2). It follows that every $\fq$ that is contained in a sufficiently large power of $\m$ is standard (see Remark \ref{rmk: generalized CM} (3)), and thus we have equality in Lemma \ref{lem: length of q^lim/q} for all such $\fq$, but then we have equality in Lemma \ref{lem: length of q^lim/q} for all $\fq$ since $\length(\fq^{\lim}/\fq)$ is independent of $\fq$. It follows that every $\fq$ generated by a system of parameters is standard by Lemma \ref{lem: length of q^lim/q} again, and thus $R$ is Buchsbaum.
\end{remark}

\begin{remark}
\label{rmk: pseudo generalized CM}
The difference $e(\fq,R)-\length(R/\fq^{\lim})$ has been studied extensively in \cite{CuongHoaLoan,CuongNhanPseudoCM,CuongLoanPseudoBuchsbaumJapan}.
\begin{enumerate}[(1)]
  \item A Noetherian local ring $(R,\m)$ such that $\sup\{e(\fq,R)-\length(R/\fq^{\lim})\}<\infty$ (where $\fq$ runs over all ideals generated by a system of parameters) is called {\it pseudo generalized Cohen-Macaulay} in \cite{CuongNhanPseudoCM}. It follows from \cite[Corollary 3.3]{CuongNhanPseudoCM} that if $(R,\m)$ is pseudo generalized Cohen-Macaulay and $\widehat{R}$ is unmixed, then $R$ is generalized Cohen-Macaulay.
  \item Similarly, a Noetherian local ring $(R,\m)$ such that $e(\fq,R)-\length(R/\fq^{\lim})$ is a constant independent of $\fq$ is called {\it pseudo Buchsbaum} in \cite{CuongLoanPseudoBuchsbaumJapan}. It is proved in \cite[Theorem 1.1]{CuongLoanPseudoBuchsbaumJapan} that if $(R,\m)$ is pseudo Buchsbaum and $\widehat{R}$ is unmixed, then $R$ is Buchsbaum.
  \item Now suppose $(R,\m)$ an unmixed excellent local ring of prime characteristic $p>0$ (thus $\widehat{R}$ is also unmixed). If $\sup\{e(\fq, R)-\length(R/\fq^*)\}<\infty$, then as $\fq^{\lim}\subseteq \fq^*$ by Remark \ref{rmk: lim closure tight closure}, we have $\sup\{e(\fq, R)-\length(R/\fq^{\lim})\}<\infty$ and thus $R$ is generalized Cohen-Macaulay. But then $R_{\fp}$ is F-rational for all $\fp\in \Spec(R)\backslash\{\m\}$ by \cite[Theorem 1.2]{GotoNakamuraBoundofDifference}.
\end{enumerate}
\end{remark}

Now we prove the main result of this section.

\begin{proposition}
\label{prop: (1) equiv (2) in main}
Let $(R,\m)$ be an equidimensional excellent local ring of prime characteristic $p>0$ and dimension $d$. Let $\fq$ denote an ideal generated by a system of parameters. Consider the following conditions
\begin{enumerate}[(1)]
  \item The difference $e(\fq, R)-\length(R/\fq^*)$ is independent of $\fq$.
  \item $\length(\fq^*/\fq)$ is independent of $\fq$.
\end{enumerate}
Then we have $(2)\Rightarrow(1)$, if additionally $R$ is unmixed, then we also have $(1)\Rightarrow(2)$.
\end{proposition}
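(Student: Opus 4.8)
The plan is to play the two differences
\[
A(\fq):=\length(R/\fq)-e(\fq,R)\quad\text{and}\quad B(\fq):=e(\fq,R)-\length(R/\fq^{\lim})
\]
against the quantity we care about, $\length(\fq^*/\fq)$, using the decomposition
\[
\length(\fq^*/\fq)=A(\fq)+B(\fq)+\length(R/\fq^{\lim})-\length(R/\fq^*)=A(\fq)+B(\fq)+\length(\fq^*/\fq^{\lim}),
\]
valid because $\fq\subseteq\fq^{\lim}\subseteq\fq^*$ by Remark~\ref{rmk: lim closure tight closure}. For $(2)\Rightarrow(1)$ I would first note that $\length(\fq^*/\fq)$ being constant forces $A(\fq)=\length(R/\fq)-e(\fq,R)$ to be bounded (it is $\le\length(\fq^*/\fq)$), so $R$ is generalized Cohen--Macaulay by Remark~\ref{rmk: generalized CM}(2); moreover the constancy of $\length(\fq^*/\fq)$ also gives $\sup\{e(\fq,R)-\length(R/\fq^*)\}<\infty$, so Theorem~\ref{thm: GotoNakamura} applies and $0^*_{H_\m^d(R)}$ has finite length. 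Then I want to rewrite $e(\fq,R)-\length(R/\fq^*)=A(\fq)+\length(\fq^*/\fq)-\bigl(\length(R/\fq)-e(\fq,R)\bigr)$... more cleanly: $e(\fq,R)-\length(R/\fq^*)=\length(\fq^*/\fq)-A(\fq)$. So it suffices to show $A(\fq)=\length(R/\fq)-e(\fq,R)$ is independent of $\fq$, i.e.\ that $R$ is Buchsbaum. This is exactly the content of Remark~\ref{rmk: limit closure Buchsbaum} applied to the limit closure: since $\length(\fq^*/\fq)$ is constant and, by \eqref{eqn: top lc lim closure} together with finite length of $0^*_{H_\m^d(R)}$, the quotient $\length(\fq^*/\fq^{\lim})$ stabilizes to $\length(0^*_{H_\m^d(R)})$ for $\fq$ deep in powers of $\m$, one deduces $\length(\fq^{\lim}/\fq)$ is eventually constant; combined with Lemma~\ref{lem: length of q^lim/q} (equality for standard $\fq$) and the constancy hypothesis this propagates to all $\fq$, making every system of parameters standard, hence $R$ Buchsbaum, hence $A$ constant, hence $(1)$.

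For the converse $(1)\Rightarrow(2)$ under the additional unmixedness hypothesis, the key extra input is Remark~\ref{rmk: pseudo generalized CM}: if $e(\fq,R)-\length(R/\fq^*)$ is constant then it is in particular bounded, so by Remark~\ref{rmk: pseudo generalized CM}(3) (using unmixedness) $R$ is generalized Cohen--Macaulay and $R_\fp$ is F-rational on the punctured spectrum; by Theorem~\ref{thm: GotoNakamura} again $0^*_{H_\m^d(R)}$ has finite length. Now $e(\fq,R)-\length(R/\fq^{\lim})=\bigl(e(\fq,R)-\length(R/\fq^*)\bigr)+\length(\fq^*/\fq^{\lim})$, and the second term equals $\length(0^*_{H_\m^d(R)})$ whenever $\fq$ is contained in a large enough power of $\m$ (by \eqref{eqn: top lc lim closure} and injectivity of the transition maps, the image of $(\fq^{\lim})^{\text{quotient}}$... precisely: $\length(\fq^*/\fq^{\lim})\le\length(0^*_{H_\m^d(R)})$ always, with equality for deep $\fq$). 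Hence $e(\fq,R)-\length(R/\fq^{\lim})$ is constant on a cofinal family of parameter ideals; since $R$ is generalized Cohen--Macaulay, the deep $\fq$ are standard, so by Lemma~\ref{lem: length of q^lim/q} the value is the standard one $\sum_{i=0}^{d-1}\binom{d}{i}\length(H_\m^i(R))$, and I claim this forces $R$ to be pseudo Buchsbaum, i.e.\ $e(\fq,R)-\length(R/\fq^{\lim})$ constant for \emph{all} $\fq$.

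To close that last gap I would argue as follows: write $\length(\fq^*/\fq)=\bigl(e(\fq,R)-\length(R/\fq^{\lim})\bigr)+\length(\fq^{\lim}/\fq) - \bigl(e(\fq,R)-\length(R/\fq^*)\bigr)$; wait---more directly, $\length(\fq^*/\fq)=\length(\fq^{\lim}/\fq)+\length(\fq^*/\fq^{\lim})$. I know $\length(\fq^*/\fq^{\lim})\le\length(0^*_{H_\m^d(R)})$ always, and $\length(\fq^{\lim}/\fq)\le\sum_{i=0}^{d-1}\binom di\length(H_\m^i(R))$ always (Lemma~\ref{lem: length of q^lim/q}), so $\length(\fq^*/\fq)$ is bounded above by a constant $C$, with equality on the cofinal deep family. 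On the other hand hypothesis $(1)$ says $e(\fq,R)-\length(R/\fq^*)=\length(\fq^*/\fq)-A(\fq)$ is a constant $D$; but $A(\fq)=\length(R/\fq)-e(\fq,R)\ge 0$ and, for deep standard $\fq$, equals $\sum_{i=0}^{d-1}\binom{d-1}i\length(H_\m^i(R))$. Thus on the deep family $\length(\fq^*/\fq)=D+\sum\binom{d-1}i\length(H_\m^i(R))$, which identifies the constant $D$. Then for an arbitrary $\fq$: $\length(\fq^*/\fq)=D+A(\fq)\le D+\sum\binom{d-1}i\length(H_\m^i(R))$ by Remark~\ref{rmk: generalized CM}(3), while also $\length(\fq^*/\fq)=\length(\fq^{\lim}/\fq)+\length(\fq^*/\fq^{\lim})\le \sum\binom di\length(H_\m^i(R))+\length(0^*_{H_\m^d(R)})$; combining these with the fact that $D$ plus the \emph{maximal} value of $A$ is achieved shows $A(\fq)$ must achieve its maximum for every $\fq$, i.e.\ every system of parameters is standard, so $R$ is Buchsbaum, and then $\length(\fq^*/\fq)=D+\text{const}$ is independent of $\fq$, giving $(2)$. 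The main obstacle I anticipate is precisely this bookkeeping step---propagating constancy from the cofinal deep family to all parameter ideals---which requires carefully combining the three upper bounds (for $A(\fq)$, for $\length(\fq^{\lim}/\fq)$, and for $\length(\fq^*/\fq^{\lim})$) so that saturation of the sum forces saturation of each summand; the unmixedness hypothesis enters only to guarantee generalized Cohen--Macaulayness via Remark~\ref{rmk: pseudo generalized CM}, without which $A(\fq)$ need not even be bounded.
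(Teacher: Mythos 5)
Your $(2)\Rightarrow(1)$ direction is correct and is essentially the paper's argument: write the constant $\length(\fq^*/\fq)$ as $\length(\fq^{\lim}/\fq)+\length(\fq^*/\fq^{\lim})$, bound each summand above by a value that is attained on the Frobenius powers $\fq^{[p^e]}$ for $e\gg 0$, conclude that both summands are constant, deduce Buchsbaumness from Remark~\ref{rmk: limit closure Buchsbaum}, and finish.

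The $(1)\Rightarrow(2)$ direction, however, has a genuine gap in your final ``bookkeeping'' paragraph. You attempt to force $A(\fq)=\length(R/\fq)-e(\fq,R)$ to its maximum $A_{\max}:=\sum_{i=0}^{d-1}\binom{d-1}{i}\length(H_\m^i(R))$ for every $\fq$ by combining the two upper bounds $\length(\fq^*/\fq)=D+A(\fq)\le D+A_{\max}$ and $\length(\fq^*/\fq)\le \sum_{i=0}^{d-1}\binom{d}{i}\length(H_\m^i(R))+\length(0^*_{H_\m^d(R)})$. But these two upper bounds are \emph{equal} (that equality is exactly how you identified $D$), so their conjunction yields only $A(\fq)\le A_{\max}$, which you already knew; nothing forces equality, since all three quantities you bound appear with positive sign on the same side of the identity. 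The saturation argument must instead be applied to the quantity that is constant by hypothesis, namely
\[
D=\bigl(e(\fq,R)-\length(R/\fq^{\lim})\bigr)+\length(\fq^*/\fq^{\lim}),
\]
using the uniform upper bound $e(\fq,R)-\length(R/\fq^{\lim})\le\sum_{i=0}^{d-1}\binom{d-1}{i-1}\length(H_\m^i(R))$, valid for \emph{all} parameter ideals of a generalized Cohen--Macaulay ring (inequality \eqref{eqn: e(q)-q^lim}, from \cite{CuongHoaLoan} and \cite{SharpHamiehLengthsGeneralizedFractions}), with equality for standard $\fq$. Since $D$ equals the sum of the two maxima on the deep family, both summands are forced to be maximal for every $\fq$; in particular $e(\fq,R)-\length(R/\fq^{\lim})$ is constant, i.e.\ $R$ is pseudo Buchsbaum. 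Note also that even then you cannot conclude that every system of parameters is standard the way you try to: equality in \eqref{eqn: e(q)-q^lim} is only asserted \emph{when} $\fq$ is standard, not conversely. To pass from pseudo Buchsbaum to Buchsbaum you must invoke Remark~\ref{rmk: pseudo generalized CM}(2) (the theorem of Cuong--Loan), and this is exactly where unmixedness of $\widehat{R}$ enters. (A smaller slip: the stable value of $e(\fq,R)-\length(R/\fq^{\lim})$ on standard $\fq$ is $\sum_{i=0}^{d-1}\binom{d-1}{i-1}\length(H_\m^i(R))$, not $\sum_{i=0}^{d-1}\binom{d}{i}\length(H_\m^i(R))$.)
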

\begin{proof}
Suppose $(2)$ holds. By Theorem \ref{thm: GotoNakamura}, we know that $R$ is generalized Cohen-Macaulay and $0^*_{H_\m^d(R)}$ has finite length. By Remark \ref{rmk: lim closure tight closure}, we have $\fq^{\lim}\subseteq \fq^*$. It follows that
$$\ell(\fq^*/\fq)=\ell(\fq^{\lim}/\fq)+\ell(\fq^*/\fq^{\lim}).$$
Since $R$ is generalized Cohen-Macaulay, by Lemma \ref{lem: length of q^lim/q}, for all $e\gg0$ we have
\begin{equation}
\label{eqn: first inequality}
\length(\fq^{\lim}/\fq)\leq \sum_{i=0}^{d-1}\binom{d}{i}\length(H_\m^i(R)) = \ell((\fq^{[p^e]})^{\lim}/\fq^{[p^e]})
\end{equation}
where the equality follows from the fact that $\fq^{[p^e]}$ is standard, see Remark \ref{rmk: generalized CM} (3) and Lemma \ref{lem: length of q^lim/q}. Moreover, we have
$\fq^*/\fq^{\lim}\hookrightarrow 0^*_{H_\m^d(R)},$
and since $0^*_{H_\m^d(R)}$ has finite length, it follows from (\ref{eqn: top lc lim closure}) that
$(\fq^{[p^e]})^*/(\fq^{[p^e]})^{\lim} \cong 0^*_{H_\m^d(R)}$ for all $e\gg0$. In particular, for all $e\gg0$ we have
\begin{equation}
\label{eqn: second inequality}
\ell(\fq^*/\fq^{\lim}) \leq \ell(0^*_{H_\m^d(R)})=\ell((\fq^{[p^e]})^*/(\fq^{[p^e]})^{\lim})
\end{equation}
Now we note that for all $e\gg0$, we have
$$\ell(\fq^{\lim}/\fq)+\ell(\fq^*/\fq^{\lim})= \ell(\fq^*/\fq) = \ell((\fq^{[p^e]})^{*}/\fq^{[p^e]}) =\ell((\fq^{[p^e]})^{\lim}/\fq^{[p^e]})+\ell((\fq^{[p^e]})^*/(\fq^{[p^e]})^{\lim})$$
where the equality in the middle follows from our assumption that $\ell(\fq^*/\fq)$ is independent of $\fq$. This combined with (\ref{eqn: first inequality}) and (\ref{eqn: second inequality}) shows that both $\ell(\fq^{\lim}/\fq)$ and $\ell(\fq^*/\fq^{\lim})$ are independent of $\fq$. In particular, $R$ is Buchsbaum by Remark \ref{rmk: limit closure Buchsbaum} and thus every $\fq$ is standard. But then we have
$$e(\fq, R)-\length(R/\fq^*)= (e(\fq, R)-\length(R/\fq)) + \length(\fq^*/\fq)$$
is independent of $\fq$, so $(1)$ holds.

Next we suppose $(1)$ holds and $R$ is unmixed (note that this implies $\widehat{R}$ is unmixed since $R$ is excellent). By Remark \ref{rmk: lim closure tight closure}, we have $\fq^{\lim}\subseteq \fq^*$. Thus we have
$$e(\fq,R)-\length(R/\fq^{*})= (e(\fq,R)-\length(R/\fq^{\lim})) + \length(\fq^*/\fq^{\lim}).$$
Since $e(\fq,R)-\length(R/\fq^{*})$ is independent of $\fq$, we know that $\sup\{e(\fq,R)-\length(R/\fq^{\lim})\}<\infty$ and $\sup\{\length(\fq^*/\fq^{\lim})\}<\infty$. Thus by Remark \ref{rmk: pseudo generalized CM} (1), $R$ is generalized Cohen-Macaulay. Now by \cite[Corollary 4.3]{CuongHoaLoan} and \cite[Theorem 3.7]{SharpHamiehLengthsGeneralizedFractions}, we have
$$
e(\fq, R)-\length(R/\fq^{\lim})\leq \sum_{i=0}^{d-1}\binom{d-1}{i-1}\length(H_\m^i(R))
$$
with equality holds when $\fq$ is standard by \cite[Theorem 5.1]{CuongHoaLoan} (in particular, when $\fq$ is contained in a sufficiently high power of $\m$ by Remark \ref{rmk: generalized CM} (3)). Moreover, as $\sup\{\length(\fq^*/\fq^{\lim})\}<\infty$, by examining (\ref{eqn: top lc lim closure}), we know that
$(\fq^{[p^e]})^*/(\fq^{[p^e]})^{\lim} \cong 0^*_{H_\m^d(R)}$ for all $e\gg0$.

Therefore, if $e(\fq,R)-\length(R/\fq^{*})$ is independent of $\fq$, then both $e(\fq, R)-\length(R/\fq^{\lim})$ and $\length(\fq^*/\fq^{\lim})$ are independent of $\fq$. Now by Remark \ref{rmk: pseudo generalized CM} (2), $R$ is Buchsbaum. But then
$$\length(\fq^*/\fq)= (\length(R/\fq)-e(\fq, R)) + (e(\fq, R)-\length(R/\fq^*))$$
is independent of $\fq$, so $(2)$ holds.
\end{proof}

\begin{remark}
\label{rmk: length indep implies Buchsbaum}
It is evident from the proof of Proposition \ref{prop: (1) equiv (2) in main} that, if $(R,\m)$ is an unmixed excellent local ring of prime characteristic $p>0$ and if either $(1)$ or $(2)$ in Proposition \ref{prop: (1) equiv (2) in main} holds, then $R$ is Buchsbaum and that $\fq^*/\fq^{\lim}\cong 0^*_{H_\m^d(R)}$ for every ideal $\fq$ that is generated by a system of parameters.
\end{remark}

\section{Relative Frobenius action on local cohomology}

In this section, we study $\length(\fq^*/\fq)$ when $\fq$ is a parameter ideal contained in $\tau_{\pa}(R)$. Our full result, Theorem \ref{thm: length is independent of sop}, answers positively a question of the second author \cite[Question 1]{PhamHungQuyTightClosureParameterIdeal} and it immediately implies one implication ($(3)\Rightarrow(2)$) in the Main Theorem. Our main technique is the {\it relative Frobenius action} on local cohomology introduced in \cite{PolstraPhamHungQuyNilpotence}, which turns out to be very useful in the study of tight closure of parameter ideals. 

Let $R$ be a Noetherian ring of prime characteristic $p>0$ and let $J \subseteq I$ be ideals of $R$. The Frobenius endomorphism $F:R/J\to R/J$ can be factored as a composition of two natural maps:
$$R/ J  \to R/J^{[p]} \twoheadrightarrow R/J,$$
where the second map is the natural projection and we denote the first map by $F_R$, which sends $r + J$ to $r^p + J^{[p]}$ for all $r \in R$. The homomorphism $F_R$ induces the relative Frobenius actions on local cohomology modules $F_R: H^i_I(R/J) \to H^i_I(R/J^{[p]})$. We define the {\it relative tight closure} of the zero submodule of $H^i_I(R/J)$ as
$$0^{*_R}_{H^i_I(R/J)} := \{ \eta \mid c F^e_R(\eta)=0 \in H^i_I(R/J^{[p^e]}) \text{ for some } c \in R^{\circ} \text{ and for all } e \gg 0 \}.$$
It is easy to verify that, if $J$ is an $\m$-primary ideal, then $0^{*_R}_{H^0_\m(R/J)}=J^*/J$, while if $J=0$ then $0^{*_R}_{H^d_\m(R/J)}=0^{*}_{H^d_\m(R)}$. Roughly speaking, we will use relative tight closure as a bridge to connect tight closure of parameter ideals and tight closure of the zero submodule in the top local cohomology module.
\begin{remark}
\label{rmk: relative tight closure in top LC}
If $(R, \frak m)$ is an equidimensional excellent local ring and $J = (y_1, \ldots, y_s)$ such that $y_1, \ldots, y_s, x_1, \ldots, x_t$ is part of a system of parameters, then with $I=(y_1,\dots, y_s, x_1,\dots,x_t)$, we have by \cite[Lemma 5.7]{PolstraPhamHungQuyNilpotence} that
\[
0^{*_R}_{H^t_I(R/J)} \cong \varinjlim_n \frac{(J, x_{1}^n, \ldots, x_{t}^n)^*}{(J, x_{1}^n, \ldots, x_{t}^n)}
\]
where the transition maps are multiplication by $x_1\cdots x_t$.
\end{remark}

The following is our key technical lemma.

\begin{lemma}
\label{lem: key lemma}
Let $(R, \frak m)$ be an equidimensional excellent local ring of dimension $d$, and $x_1, \ldots, x_d$ a system of parameters. Set $\fq_i = (x_1, \ldots, x_i)$ for $i \le d$. Then for each $i<d$ the short exact sequence
$$0 \to R/(\fq_i : x_{i+1}) \overset{\cdot x_{i+1}}{\longrightarrow} R/\fq_i \longrightarrow R/\fq_{i+1} \to 0$$
induces an exact sequence
\begin{equation}
\label{eqn: LES relative tight closure}
\cdots \to  H^{d-i-1}_{\frak m}(R/\fq_i) \to 0^{*_R}_{H^{d-i-1}_{\frak m}(R/\fq_{i+1})} \to 0^{*_R}_{H^{d-i}_{\frak m}(R/\fq_i)} \to x_{i+1}0^{*_R}_{H^{d-i}_{\frak m}(R/\fq_i)} \to 0,
\end{equation}
and an injective map
$$\frac{H^{d-i-1}_{\frak m}(R/\fq_{i+1})}{0^{*_R}_{H^{d-i-1}_{\frak m}(R/\fq_{i+1})}} \hookrightarrow \frac{H^{d-i}_{\frak m}(R/\fq_i)}{0^{*_R}_{H^{d-i}_{\frak m}(R/\fq_i)}}.$$
\end{lemma}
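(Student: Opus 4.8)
The plan is to derive everything from the long exact sequence of local cohomology attached to the given short exact sequence, tracking how the relative Frobenius action interacts with it. First I would apply $\mathbf{R}\Gamma_\m(-)$ to $0 \to R/(\fq_i:x_{i+1}) \xrightarrow{\cdot x_{i+1}} R/\fq_i \to R/\fq_{i+1} \to 0$, obtaining the long exact sequence
$$\cdots \to H^{d-i-1}_\m(R/\fq_i) \to H^{d-i-1}_\m(R/\fq_{i+1}) \to H^{d-i}_\m(R/(\fq_i:x_{i+1})) \xrightarrow{\cdot x_{i+1}} H^{d-i}_\m(R/\fq_i) \to \cdots.$$
The key observation is that this short exact sequence, together with its $e$-th Frobenius twist $0 \to R/(\fq_i:x_{i+1})^{[p^e]} \to R/\fq_i^{[p^e]} \to R/\fq_{i+1}^{[p^e]} \to 0$, fits into a commutative ladder in which the vertical maps are the relative Frobenius actions $F_R^e$. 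Passing to the connecting homomorphisms and local cohomology we get a commutative diagram whose rows are the long exact sequences above (at level $e$) and whose columns are $F_R^e$. The point of \cite[Lemma 5.7]{PolstraPhamHungQuyNilpotence} / Remark \ref{rmk: relative tight closure in top LC} is that $0^{*_R}$ of these local cohomology modules can be computed, but here I mainly need the functoriality: a map of $R$-modules compatible with relative Frobenius carries relative tight closure of zero into relative tight closure of zero.

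The second step is to identify the relevant pieces. Since $x_{i+1}$ is a parameter on $R/\fq_i$, the module $R/(\fq_i:x_{i+1})$ has dimension $d-i$ as well, and multiplication by $x_{i+1}$ on $H^{d-i}_\m(R/\fq_i)$ is the composite $H^{d-i}_\m(R/(\fq_i:x_{i+1})) \xrightarrow{\cdot x_{i+1}} H^{d-i}_\m(R/\fq_i)$ up to the identification coming from the short exact sequence; more precisely the cokernel of $\cdot x_{i+1}$ on $H^{d-i}_\m(R/\fq_i)$ receives the image of $H^{d-i-1}_\m(R/\fq_{i+1})$. I would restrict the long exact sequence to the relative-tight-closure submodules: because $0^{*_R}$ is functorial under Frobenius-compatible maps, the sequence
$$H^{d-i-1}_\m(R/\fq_i) \to 0^{*_R}_{H^{d-i-1}_\m(R/\fq_{i+1})} \to 0^{*_R}_{H^{d-i}_\m(R/(\fq_i:x_{i+1}))} \xrightarrow{\cdot x_{i+1}} 0^{*_R}_{H^{d-i}_\m(R/\fq_i)}$$
is a complex; exactness in the middle requires an argument that if $\eta \in H^{d-i-1}_\m(R/\fq_{i+1})$ maps into $0^{*_R}$ of the next term then $\eta$ itself lies in $0^{*_R}$, which is where a test exponent (or the defining $c F_R^e = 0$ condition) is used to chase the diagram one Frobenius level at a time. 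Finally, to get the stated sequence I would use that $R/(\fq_i:x_{i+1})$ and $R/\fq_i$ have the same top local cohomology dimension and that the connecting map $H^{d-i}_\m(R/(\fq_i:x_{i+1})) \to H^{d-i}_\m(R/\fq_i)$ is just multiplication by $x_{i+1}$ — after identifying $0^{*_R}_{H^{d-i}_\m(R/(\fq_i:x_{i+1}))}$ with $0^{*_R}_{H^{d-i}_\m(R/\fq_i)}$ (the colimit descriptions agree, both being built from $(\fq_i, x_{i+1}^n,\dots)^*$-type ideals), the image of $\cdot x_{i+1}$ is $x_{i+1}\, 0^{*_R}_{H^{d-i}_\m(R/\fq_i)}$, which gives the right-hand end of \eqref{eqn: LES relative tight closure}.

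For the injectivity statement, I would take the long exact sequence, quotient the terms $H^{d-i-1}_\m(R/\fq_{i+1})$ and $H^{d-i}_\m(R/\fq_i)$ by their respective relative tight closures of zero, and show the induced map on quotients is injective. Concretely, the kernel of $H^{d-i-1}_\m(R/\fq_{i+1}) \to H^{d-i}_\m(R/\fq_i)$ is the image of $H^{d-i-1}_\m(R/\fq_i)$, and one needs that any element of $H^{d-i-1}_\m(R/\fq_{i+1})$ mapping to $0$ in the quotient $H^{d-i}_\m(R/\fq_i)/0^{*_R}$ already lies in $0^{*_R}_{H^{d-i-1}_\m(R/\fq_{i+1})}$. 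This is exactly the middle-exactness of the $0^{*_R}$-restricted sequence established in the previous step, combined with the fact that the image of $H^{d-i-1}_\m(R/\fq_i)$ in $H^{d-i-1}_\m(R/\fq_{i+1})$ lands inside $0^{*_R}$ — which follows because that image is annihilated by $x_{i+1}$ (it is a boundary), and any element of the top local cohomology of $R/\fq_{i+1}$ killed by the "extra parameter" direction is relatively tightly-closure-trivial via the colimit description. I expect the main obstacle to be the bookkeeping of middle-exactness after restricting to $0^{*_R}$: this is not a formal diagram chase because relative tight closure is defined by an asymptotic ($e \gg 0$) vanishing condition, so one must either invoke Sharp's uniform test exponent (Theorem \ref{thm: sharp text exponent}) to make the condition uniform across the ladder, or argue directly that the connecting maps respect the defining condition level by level. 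Once that compatibility is pinned down, the rest is a routine extraction from the long exact sequence.
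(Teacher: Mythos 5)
Your skeleton matches the paper's: take the long exact sequence of local cohomology, build the commutative ladder with the relative Frobenius maps $F_R^e$ as vertical arrows, and restrict to the $0^{*_R}$ submodules. But the two steps that carry all the content of the lemma are not correctly handled. First, the containment $\mathrm{Im}\bigl(H^{d-i-1}_\m(R/\fq_i)\to H^{d-i-1}_\m(R/\fq_{i+1})\bigr)\subseteq 0^{*_R}_{H^{d-i-1}_\m(R/\fq_{i+1})}$: your justification is that this image is annihilated by $x_{i+1}$ and that any such element is relatively tightly-closure-trivial. That criterion proves too much — the \emph{entire} module $H^{d-i-1}_\m(R/\fq_{i+1})$ is an $R/\fq_{i+1}$-module and hence killed by $x_{i+1}$, so your argument would force $0^{*_R}$ to be everything. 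The actual obstruction is uniformity in $e$: to place $\alpha(\xi)$ in $0^{*_R}$ you need a single $c\in R^\circ$ with $cF_R^e(\alpha(\xi))=\alpha_e(cF_R^e(\xi))=0$ for all $e\gg 0$, i.e.\ a single $c$ annihilating $H^{d-i-1}_\m(R/\fq_i^{[p^e]})$ for \emph{all} $e$. The paper gets this from the uniform local cohomology annihilator theorem (\cite[Corollary 4.4]{PhamHungQuyUniformAnnihilators}, applied with $c\in\mf{a}(R)^N\cap R^\circ$ where $\mf{a}(R)=\prod_{j<d}\Ann H^j_\m(R)$); nothing in your proposal supplies this input.

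Second, the injectivity of the induced map on the quotients (equivalently, that $\beta^{-1}(0^{*_R}_{H^{d-i}_\m(R/\fq_i)})\subseteq 0^{*_R}_{H^{d-i-1}_\m(R/\fq_{i+1})}$, which is what upgrades the restricted sequence from a complex to an exact sequence). You correctly flag this as the hard point but propose resolving it with Sharp's test exponents or a ``level by level'' chase. A test exponent only lets you test membership at a single $e$; the problem is that the level-$e$ connecting maps $\beta_e$ are not injective, so knowing $\beta_e(cF_R^e(\eta))=0$ does not let you pull the vanishing back to $\eta$ without further input. The paper's route is entirely different: it rewrites both quotients via Remark \ref{rmk: relative tight closure in top LC} as direct limits of $R/(\fq_{i+1},x_{i+2}^n,\dots,x_d^n)^*$ and $R/(\fq_i,x_{i+1}^n,\dots,x_d^n)^*$, identifies $\beta$ with multiplication by $x_{i+1}^{n-1}$ on these systems, and invokes the colon-capturing theorem $(\fq_i,x_{i+1}^n,\dots,x_d^n)^*:x_{i+1}^{n-1}=(\fq_{i+1},x_{i+2}^n,\dots,x_d^n)^*$ of Aberbach--Huneke--Smith to see each transition map is injective. (One can alternatively deduce this step from the same uniform annihilator, since $\ker\beta_e=\mathrm{Im}\,\alpha_e$ is killed by $\mf{a}(R)^N$.) Either way, a substantive external theorem is required, and your proposal names neither; as written it does not close.
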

\begin{proof}
We first note that since $\dim(R/\fq_i) = d-i$ and $\fq_{i+1} \subseteq \Ann((\fq_i:x_{i+1})/\fq_i)$, we have that $\dim ((\fq_i:x_{i+1})/\fq_i) \le d-i-1$ and thus $H^{d-i}_{\frak m}(R/(\fq_i:x_{i+1})) \cong H^{d-i}_{\frak m}(R/\fq_i)$. The short exact sequence
$$0 \to R/(\fq_i : x_{i+1}) \overset{\cdot x_{i+1}}{\longrightarrow} R/\fq_i \longrightarrow R/\fq_{i+1} \to 0$$
induces an exact sequence of local cohomology
$$\cdots \to  H^{d-i-1}_{\frak m}(R/\fq_i) \overset{\alpha}{\longrightarrow} H^{d-i-1}_{\frak m}(R/\fq_{i+1}) \overset{\beta}{\longrightarrow} H^{d-i}_{\frak m}(R/\fq_i) \overset{\cdot x_{i+1}}{\longrightarrow} H^{d-i}_{\frak m}(R/\fq_i) \to 0.$$
For each $e \ge 1$ we have the following commutative diagram
\[
\begin{CD}
H^{d-i-1}_{\frak m}(R/\fq_i) @>\alpha>> H^{d-i-1}_{\frak m}(R/\fq_{i+1}) @>\beta>> H^{d-i}_{\frak m}(R/\fq_i) @>\cdot x_{i+1}>> H^{d-i}_{\frak m}(R/\fq_i) \\
@VVF^{e}_RV @VVF^{e}_RV @VVF^{e}_RV @VVF^{e}_RV\\
H^{d-i-1}_{\frak m}(R/\fq_i^{[p^e]}) @>\alpha_e>> H^{d-i-1}_{\frak m}(R/\fq_{i+1}^{[p^e]}) @>\beta_e>> H^{d-i}_{\frak m}(R/\fq_i^{[p^e]}) @>\cdot x_{i+1}^{p^e}>> H^{d-i}_{\frak m}(R/\fq_i^{[p^e]}). \\
\end{CD}
\]

Set $\frak a(R) = \prod_{i=0}^{d-1} \Ann(H^i_{\frak m}(R))$. We have $\dim (R/\frak a(R))<d$ and by \cite[Corollary 4.4]{PhamHungQuyUniformAnnihilators}, there exists $N$ such that $$\frak a(R)^N H^{d-i-1}_{\frak m}(R/\fq_i^{[p^e]}) =0$$ for all $e\geq 0$. 
Thus, as $R$ is equidimensional, we can choose $c \in \frak a(R)^N \cap R^{\circ}$ and we have $c F^e_R \circ \alpha = \alpha_e (cF^e_R) = 0$ for all $e \ge 0$. It follows that $\mathrm{Im}(\alpha) \subseteq 0^{*_R}_{H^{d-i-1}_{\frak m}(R/\fq_{i+1})}$. Furthermore, we have $\beta (0^{*_R}_{H^{d-i-1}_{\frak m}(R/\fq_{i+1})}) \subseteq 0^{*_R}_{H^{d-i}_{\frak m}(R/\fq_i)}$ which follows from the middle square of the above commutative diagram. Therefore we have an exact sequence
$$H^{d-i-1}_{\frak m}(R/\fq_i) \to 0^{*_R}_{H^{d-i-1}_{\frak m}(R/\fq_{i+1})} \to 0^{*_R}_{H^{d-i}_{\frak m}(R/\fq_i)}.$$
To obtain the exact sequence (\ref{eqn: LES relative tight closure}) it suffices to show
$$\beta(0^{*_R}_{H^{d-i-1}_{\frak m}(R/\fq_{i+1})}) = 0:_{0^{*_R}_{H^{d-i}_{\frak m}(R/\fq_i)}}x_{i+1} = (0:_{H^{d-i}_{\frak m}(R/\fq_i)}x_{i+1}) \cap 0^{*_R}_{H^{d-i}_{\frak m}(R/\fq_i)}.$$
Hence it is enough to show
$$\beta(0^{*_R}_{H^{d-i-1}_{\frak m}(R/\fq_{i+1})}) = \beta(H^{d-i-1}_{\frak m}(R/\fq_{i+1})) \cap 0^{*_R}_{H^{d-i}_{\frak m}(R/\fq_i)}.$$
This will follow from the last assertion about the injectivity of the induced homomorphism
\begin{equation}
\label{eqn: injectivity of induced map}
\frac{H^{d-i-1}_{\frak m}(R/\fq_{i+1})}{0^{*_R}_{H^{d-i-1}_{\frak m}(R/\fq_{i+1})}} \overset{\beta}{\rightarrow} \frac{H^{d-i}_{\frak m}(R/\fq_i)}{0^{*_R}_{H^{d-i}_{\frak m}(R/\fq_i)}}.
\end{equation}
Thus it remains to establish that (\ref{eqn: injectivity of induced map}) is an injection. By Remark \ref{rmk: relative tight closure in top LC} we have
$$\frac{H^{d-i-1}_{\frak m}(R/\fq_{i+1})}{0^{*_R}_{H^{d-i-1}_{\frak m}(R/\fq_{i+1})}} \cong \varinjlim_n \frac{R}{(\fq_{i+1}, x_{i+2}^n, \ldots, x_{d}^n)^*},$$
and that
$$\frac{H^{d-i}_{\frak m}(R/\fq_i)}{0^{*_R}_{H^{d-i}_{\frak m}(R/\fq_i)}} \cong \varinjlim_n \frac{R}{(\fq_i, x_{i+1}^n, \ldots, x_{d}^n)^*}.
$$
Moreover, one checks that the map $\beta$ is induced by multiplication by $x_{i+1}^{n-1}$ map
$$\frac{R}{(\fq_{i+1}, x_{i+2}^n, \ldots, x_{d}^n)^*}\xrightarrow{\cdot x_{i+1}^{n-1}} \frac{R}{(\fq_i, x_{i+1}^n, \ldots, x_{d}^n)^*}  $$
and taking a direct limit for all $n$. But for each $n$ we have
$$(\fq_i, x_{i+1}^n, \ldots, x_{d}^n)^* : x_{i+1}^{n-1} = (\fq_{i+1}, x_{i+2}^n, \ldots, x_{d}^n)^*$$
by \cite[Theorem 2.3]{AberbachHunekeSmith}.  Thus the above multiplication by $x_{i+1}^{n-1}$ map is injective for each $n$. Therefore the direct limit map (\ref{eqn: injectivity of induced map}) is also injective. This finishes the proof.
\end{proof}

Now we can prove the main result of this section.

\begin{theorem}
\label{thm: length is independent of sop}
Let $(R, \frak m)$ be an equidimensional excellent local ring of dimension $d$ such that $\tau_{\pa}(R)$ is $\m$-primary. Let $\fq$ be an ideal generated by a system of parameters that is contained in  $\tau_{\pa}(R)$. Then we have
$$\ell(\fq^*/\fq) = \sum_{i=0}^{d-1}\binom{d}{i} \ell(H^i_{\frak m}(R)) + \ell(0^*_{H^d_{\frak m}(R)}).$$
\end{theorem}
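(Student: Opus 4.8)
The plan is to induct on $d$, peeling off one parameter at a time and using Lemma~\ref{lem: key lemma} to relate the relative tight closures $0^{*_R}_{H^j_\m(R/\fq_i)}$ across the short exact sequences $0\to R/(\fq_i:x_{i+1})\xrightarrow{\cdot x_{i+1}} R/\fq_i\to R/\fq_{i+1}\to 0$. First I would set up the bookkeeping: for $\fq=(x_1,\dots,x_d)\subseteq\tau_{\pa}(R)$, note $\ell(\fq^*/\fq)=\ell(0^{*_R}_{H^0_\m(R/\fq)})$, and I want to track the quantity $\ell(0^{*_R}_{H^{d-i}_\m(R/\fq_i)})$ as $i$ runs from $d$ down to $0$. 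At the top ($i=d$) we have $0^{*_R}_{H^0_\m(R/\fq_d)}=\fq^*/\fq$, and at $i=0$ we have $0^{*_R}_{H^d_\m(R/\fq_0)}=0^{*}_{H^d_\m(R)}$. The key point is that because $\fq\subseteq\tau_{\pa}(R)$, \emph{every} parameter ideal appearing along the way — in particular the partial ideals $\fq_i$ together with powers $x_{i+1}^n,\dots,x_d^n$ — has its tight closure annihilated by $\m$, i.e. $\m\cdot(\fq_i,x_{i+1}^n,\dots,x_d^n)^*\subseteq(\fq_i,x_{i+1}^n,\dots,x_d^n)$; this should force the relevant transition maps in the direct limits of Remark~\ref{rmk: relative tight closure in top LC} to be \emph{zero} past the first step (multiplication by $x_1\cdots x_t$ lands in $\m$ times the tight closure), so that each $0^{*_R}_{H^{j}_\m(R/\fq_i)}$ for $j>0$ is already captured at finite level, and more importantly the surjection at the end of (\ref{eqn: LES relative tight closure}), $0^{*_R}_{H^{d-i}_\m(R/\fq_i)}\to x_{i+1}0^{*_R}_{H^{d-i}_\m(R/\fq_i)}$, has kernel that I can compute.

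Concretely, from the exact sequence (\ref{eqn: LES relative tight closure}) I extract
$$\ell\big(0^{*_R}_{H^{d-i-1}_\m(R/\fq_{i+1})}\big)=\ell\big(0^{*_R}_{H^{d-i}_\m(R/\fq_i)}\big)-\ell\big(x_{i+1}0^{*_R}_{H^{d-i}_\m(R/\fq_i)}\big)+\ell\big(\im(H^{d-i-1}_\m(R/\fq_i)\to 0^{*_R}_{H^{d-i-1}_\m(R/\fq_{i+1})})\big).$$
The second term I control using $\fq\subseteq\tau_{\pa}(R)$: since $0^{*_R}_{H^{d-i}_\m(R/\fq_i)}$ is a directed union of copies of $(\fq_i,x_{i+1}^n,\dots,x_d^n)^*/(\fq_i,\dots)$ with transition maps multiplication by $x_{i+1}\cdots x_d$, and each such module is killed by $\m$, multiplication by $x_{i+1}$ (a fortiori $x_{i+1}\cdots x_d$) is zero, so $x_{i+1}0^{*_R}_{H^{d-i}_\m(R/\fq_i)}=0$. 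Hence the left term equals $\ell(0^{*_R}_{H^{d-i}_\m(R/\fq_i)})+\ell(\im\alpha)$ where $\alpha:H^{d-i-1}_\m(R/\fq_i)\to H^{d-i-1}_\m(R/\fq_{i+1})$; and from the long exact sequence of the short exact sequence, together with $\dim R/(\fq_i:x_{i+1})\le d-i-1$ giving $H^{d-i}_\m(R/(\fq_i:x_{i+1}))\cong H^{d-i}_\m(R/\fq_i)$, I can identify $\im\alpha$ up to finite length with $H^{d-i-1}_\m(R/\fq_i)$ modulo the image of $H^{d-i-1}_\m(R/(\fq_i:x_{i+1}))$. To make this clean I expect to need that $R$ is Buchsbaum (which follows since $\tau_{\pa}(R)\supseteq\m$ forces $\m\fq^*\subseteq\fq$, hence $\m(\fq:x)\subseteq\m\fq^*\subseteq\fq$ for parameters, giving condition (3) of Theorem~\ref{thm: Buchsbaum theory}), so that $H^j_\m(R/\fq_i)$ decomposes predictably — Buchsbaumness gives $H^j_\m(R/\fq_i)\cong\bigoplus$ of shifted copies of $H^\bullet_\m(R)$ with binomial multiplicities, and the maps $\alpha$ in the Koszul-type filtration behave as in the classical Buchsbaum computation. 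Summing the telescoping identity over $i=0,\dots,d-1$ then produces
$$\ell(\fq^*/\fq)=\ell\big(0^{*_R}_{H^0_\m(R/\fq_d)}\big)=\ell\big(0^{*}_{H^d_\m(R)}\big)+\sum_{i=0}^{d-1}\ell(\im\alpha_i),$$
and the classical Buchsbaum bookkeeping identifies $\sum_i\ell(\im\alpha_i)=\sum_{i=0}^{d-1}\binom{d}{i}\ell(H^i_\m(R))$ — this is exactly the length count that appears in Lemma~\ref{lem: length of q^lim/q} for $\ell(\fq^{\lim}/\fq)$ in the Buchsbaum case, which is reassuring since $\fq^*/\fq^{\lim}\cong 0^*_{H^d_\m(R)}$ by Remark~\ref{rmk: length indep implies Buchsbaum}.

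The main obstacle I anticipate is the precise identification of $\ell(\im\alpha_i)$ at each stage: I need that the contribution of the colon module $(\fq_i:x_{i+1})/\fq_i$ to $H^{d-i-1}_\m$ is accounted for correctly and does not leak extra length, which is where Buchsbaumness of $R$ (equivalently $\m\subseteq\tau_{\pa}(R)$, equivalently every sop standard) is doing real work — without it the colon ideals are not controlled. An alternative, possibly cleaner route that avoids the explicit $\alpha_i$ computation: reduce mod $\fq$ in stages and observe that by Buchsbaumness $R/\fq_i$ is again "nice" (its local cohomologies below top dimension are $k$-vector spaces of known dimension), then combine (\ref{eqn: LES relative tight closure}) with the already-established $(1)\Leftrightarrow(2)$ and Remark~\ref{rmk: length indep implies Buchsbaum} to get $\ell(\fq^*/\fq)=\ell(\fq^{\lim}/\fq)+\ell(0^*_{H^d_\m(R)})$, and then invoke Lemma~\ref{lem: length of q^lim/q} (with equality, since every sop is standard) to finish. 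Either way the heart of the matter is that $\fq\subseteq\tau_{\pa}(R)$ both forces $R$ Buchsbaum and kills all the transition maps among the relative tight closures beyond the first step, collapsing the spectral-sequence-like recursion of Lemma~\ref{lem: key lemma} to a single telescoping sum.
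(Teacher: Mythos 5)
Your proposal correctly identifies the two engines of the argument --- Lemma~\ref{lem: key lemma} and the vanishing $x_{i+1}\cdot 0^{*_R}_{H^{d-i}_\m(R/\fq_i)}=0$ --- but there are two genuine problems. First, you repeatedly use the hypothesis $\m\subseteq\tau_{\pa}(R)$ (to kill the finite-level modules $(\fq_i,x_{i+1}^n,\dots,x_d^n)^*/(\fq_i,x_{i+1}^n,\dots,x_d^n)$ by $\m$, and to conclude that $R$ is Buchsbaum and every system of parameters is standard). The theorem does not assume this: it only assumes $\tau_{\pa}(R)$ is $\m$-primary and that the particular $\fq$ lies in $\tau_{\pa}(R)$. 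The correct (and sufficient) observation is that $x_{i+1}\in\fq\subseteq\tau_{\pa}(R)$ already annihilates each $(\fq_i,x_{i+1}^n,\dots,x_d^n)^*/(\fq_i,x_{i+1}^n,\dots,x_d^n)$, hence $x_{i+1}$ kills $0^{*_R}_{H^{d-i}_\m(R/\fq_i)}$ by Remark~\ref{rmk: relative tight closure in top LC}; and standardness of this particular $\fq$ (not of all parameter ideals) is a known consequence of $\fq\subseteq\tau_{\pa}(R)$, for which the paper cites Huneke and Goto--Yamagishi. Your fallback route is worse: invoking Proposition~\ref{prop: (1) equiv (2) in main} and Remark~\ref{rmk: length indep implies Buchsbaum} is circular, since those presuppose that $\length(\fq^*/\fq)$ is independent of $\fq$, which is exactly the conclusion this theorem is used to establish in the Main Theorem.

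Second, and more substantively, your telescoping computation hinges on identifying $\ell(\im\alpha_i)$ at every stage with the expected binomial contribution, which you flag as ``the main obstacle'' but do not resolve; the image of $H^{d-i-1}_\m(R/(\fq_i:x_{i+1}))\xrightarrow{\cdot x_{i+1}}H^{d-i-1}_\m(R/\fq_i)$ is not obviously zero (the multiplication map does not factor through $x_{i+1}H^{d-i-1}_\m(R/\fq_i)$ in the naive way, since the cokernel of $H^{d-i-1}_\m(R/\fq_i)\to H^{d-i-1}_\m(R/(\fq_i:x_{i+1}))$ is controlled only by the possibly nonvanishing local cohomology of $(\fq_i:x_{i+1})/\fq_i$). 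The paper avoids this bookkeeping entirely: it writes $\ell(\fq^*/\fq)=\ell(\fq^{\lim}/\fq)+\ell(\fq^*/\fq^{\lim})$, gets $\ell(\fq^{\lim}/\fq)=\sum_{i=0}^{d-1}\binom{d}{i}\ell(H^i_\m(R))$ from standardness of $\fq$ via Lemma~\ref{lem: length of q^lim/q}, and then uses \emph{only the surjectivity part} of Lemma~\ref{lem: key lemma}: since each $x_{i+1}0^{*_R}_{H^{d-i}_\m(R/\fq_i)}=0$, the maps $0^{*_R}_{H^{d-i-1}_\m(R/\fq_{i+1})}\twoheadrightarrow 0^{*_R}_{H^{d-i}_\m(R/\fq_i)}$ compose to a surjection $\fq^*/\fq\twoheadrightarrow 0^*_{H^d_\m(R)}$, which together with the a priori factorization $\fq^*/\fq\twoheadrightarrow\fq^*/\fq^{\lim}\hookrightarrow 0^*_{H^d_\m(R)}$ forces $\ell(\fq^*/\fq^{\lim})=\ell(0^*_{H^d_\m(R)})$. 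You would need to either adopt this decomposition or supply the missing argument that each $\alpha_i$ is injective on $H^{d-i-1}_\m(R/\fq_i)$ and that these lengths sum correctly.
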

\begin{proof}
First note that, since $\tau_{\pa}(R)$ is $\m$-primary, $R_{\fp}$ is F-rational for every $\fp\in\Spec(R)\backslash\{\m\}$ and thus $R$ is generalized Cohen-Macaulay and $\ell(0^*_{H^d_{\frak m}(R)})<\infty$ by Theorem \ref{thm: GotoNakamura}.

By Remark \ref{rmk: lim closure tight closure}, we have $\fq^{\lim}\subseteq \fq^*$. It follows that
$$\ell(\fq^*/\fq)=\ell(\fq^{\lim}/\fq)+\ell(\fq^*/\fq^{\lim}).$$
Since $\fq\in \tau_{\pa}(R)$, by \cite[Remark 5.11]{HunekeSixLecutresCA} and \cite[Corollary 6.18]{GotoYamagishi}, we know that $\fq$ is a standard system of parameters. Thus by Lemma \ref{lem: length of q^lim/q} we have
$$\ell(\fq^{\lim}/\fq)=  \sum_{i=0}^{d-1}\binom{d}{i} \ell(H^i_{\frak m}(R)).$$
Therefore it is enough to show that $\ell(\fq^*/\fq^{\lim})=\ell(0^*_{H^d_{\frak m}(R)})$. We have natural maps
$$\fq^*/\fq \twoheadrightarrow \fq^*/\fq^{\lim} \hookrightarrow 0^*_{H^d_{\frak m}(R)}.$$
It suffices to show that $\fq^*/\fq\to 0^*_{H^d_{\frak m}(R)}$ is surjective (for then it implies $\fq^*/\fq^{\lim}\cong 0^*_{H^d_{\frak m}(R)}$).
Let $\fq=(x_1,\dots,x_d)$ and let $\fq_i=(x_1,\dots,x_i)$. By Remark \ref{rmk: relative tight closure in top LC}, we know that
$$0^{*_R}_{H^{d-i}_{\frak m}(R/\fq_i)}\cong \varinjlim_n \frac{(\fq_i, x_{i+1}^n, \ldots, x_{d}^n)^*}{(\fq_i, x_{i+1}^n, \ldots, x_{d}^n)}.$$
Since $x_{i+1}\in \fq\subseteq \tau_{\pa}(R)$, we have $x_{i+1}0^{*_R}_{H^{d-i}_{\frak m}(R/\fq_i)}=0$ for each $i$. Now we apply Lemma \ref{lem: key lemma} to obtain that
$$0^{*_R}_{H^{d-i-1}_{\frak m}(R/\fq_{i+1})} \twoheadrightarrow 0^{*_R}_{H^{d-i}_{\frak m}(R/\fq_i)}$$
is surjective for each $i$. It follows that the following composition
$$\fq^*/\fq\cong 0^{*_R}_{H^{0}_{\frak m}(R/\fq_{d})} \twoheadrightarrow 0^{*_R}_{H^{1}_{\frak m}(R/\fq_{d-1})} \twoheadrightarrow \cdots  \twoheadrightarrow 0^{*_R}_{H^{d-i}_{\frak m}(R/\fq_i)}\twoheadrightarrow  \cdots \twoheadrightarrow  0^{*_R}_{H^{d}_{\frak m}(R/\fq_0)} \cong 0^{*}_{H^{d}_{\frak m}(R)} $$
is surjective. This finishes the proof.
\end{proof}

\section{The main result}

In this section, we prove our Main Theorem.

\begin{theorem}
\label{thm: main theorem}
Let $(R,\m, k)$ be an unmixed excellent local ring of prime characteristic $p>0$ and dimension $d\geq 1$. Let $\fq$ denote an ideal generated by a system of parameters. Then the following conditions are equivalent:
\begin{enumerate}[(1)]
  \item The difference $e(\fq, R)-\length(R/\fq^*)$ is independent of $\fq$.
  \item $\length(\fq^*/\fq)$ is independent of $\fq$.
  \item $\m\fq^*\subseteq \fq$ for every $\fq$, that is, $\tau_{\pa}(R)$ contains $\m$.
  \item The ${}^*$-truncation $\tau^{<d,*}\mathbf{R}\Gamma_\m R$ is quasi-isomorphic to a complex of $k$-vector spaces.
\end{enumerate}
\end{theorem}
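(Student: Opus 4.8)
The plan is to run the cycle $(1)\Leftrightarrow(2)$, $(2)\Rightarrow(4)$, $(4)\Rightarrow(3)$, $(3)\Rightarrow(2)$, making all four conditions equivalent. The outer two steps are already available: $(1)\Leftrightarrow(2)$ is Proposition \ref{prop: (1) equiv (2) in main} (the unmixed hypothesis entering in $(1)\Rightarrow(2)$), and $(3)\Rightarrow(2)$ follows from Theorem \ref{thm: length is independent of sop}, since $\m\subseteq\tau_{\pa}(R)$ makes $\tau_{\pa}(R)$ either equal to $R$ (so $R$ is F-rational, hence Cohen--Macaulay, and $\ell(\fq^*/\fq)=0$ for all $\fq$) or $\m$-primary with every parameter ideal contained in it, whence $\ell(\fq^*/\fq)=\sum_{i=0}^{d-1}\binom di\ell(H^i_\m(R))+\ell(0^*_{H^d_\m(R)})$ is constant.

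For $(2)\Rightarrow(4)$ I would first extract structure from $(2)$. By Remark \ref{rmk: length indep implies Buchsbaum}, $R$ is Buchsbaum (hence generalized Cohen--Macaulay, with $\ell(0^*_{H^d_\m(R)})<\infty$ by Theorem \ref{thm: GotoNakamura}) and $\fq^*/\fq^{\lim}\cong 0^*_{H^d_\m(R)}$ for every $\fq$; for a deep (hence standard) $\fq$ this gives $\ell(\fq^*/\fq)=\ell(\fq^{\lim}/\fq)+\ell(\fq^*/\fq^{\lim})=\sum_{i=0}^{d-1}\binom di\ell(H^i_\m(R))+\ell(0^*_{H^d_\m(R)})$, which is therefore the constant value of $\ell(\fq^*/\fq)$. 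All the $R/\fq_i$ are Buchsbaum, so running the telescoped length computation of Theorem \ref{thm: length is independent of sop} with the exact sequences of Lemma \ref{lem: key lemma} — in which each $\alpha_i$ is injective because $H^{d-1-i}_\m(R/\fq_i)$ is killed by $\m\ni x_{i+1}$ (using that $H^{d-1-i}_\m(R/\fq_i)\to H^{d-1-i}_\m(R/(\fq_i:x_{i+1}))$ is onto) — and using $\sum_{i=0}^{d-1}\ell(H^{d-1-i}_\m(R/\fq_i))=\sum_{k=0}^{d-1}\binom dk\ell(H^k_\m(R))$ for Buchsbaum rings, one obtains
\[\ell(\fq^*/\fq)=\sum_{i=0}^{d-1}\binom di\ell(H^i_\m(R))+\ell(0^*_{H^d_\m(R)})-\sum_{i=0}^{d-1}\ell\!\big(x_{i+1}\,0^{*_R}_{H^{d-i}_\m(R/\fq_i)}\big),\]
so $x_{i+1}\,0^{*_R}_{H^{d-i}_\m(R/\fq_i)}=0$ for every system of parameters and every $i$. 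Varying $x_{i+1}$ over all parameters of $R/\fq_i$, which generate $\m$ after the harmless reduction to infinite residue field, gives $\m\cdot 0^{*_R}_{H^{d-i}_\m(R/\fq_i)}=0$ for all $i$; in particular $\m\cdot 0^*_{H^d_\m(R)}=0$ and Lemma \ref{lem: key lemma} yields short exact sequences $0\to H^{d-1-i}_\m(R/\fq_i)\to 0^{*_R}_{H^{d-1-i}_\m(R/\fq_{i+1})}\to 0^{*_R}_{H^{d-i}_\m(R/\fq_i)}\to0$. I would then deduce $(4)$ from this ``${}^*$-surjectivity'' together with Schenzel's criterion, mimicking the passage from Theorem \ref{thm: surjectivity criterion for Buchsbaum} to Theorem \ref{thm: schenzel}: using the presentation $\tau^{<d,*}\mathbf{R}\Gamma_\m R\cong\tau^{\le d}(\mathbf{R}\Gamma_\m(R^+/R)[-1])$ of Lemma \ref{lem: alternative description} and the Koszul comparison $K^\bullet(\underline x;R^+/R)\to C^\bullet(\underline x;R^+/R)$, noting that $\fq^*/\fq=\fq^+/\fq$ (Theorem \ref{thm: SmithPlusClosure}) is the $(d-1)$st cohomology of $K^\bullet(\underline x;R^+/R)$.

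The implication $(4)\Rightarrow(3)$ is the reverse direction of that same ${}^*$-analog of Schenzel's criterion. From $\tau^{<d,*}\mathbf{R}\Gamma_\m R\in D(k)$ one reads off, by truncating below degree $d$, that $\tau^{<d}\mathbf{R}\Gamma_\m R\in D(k)$, so $R$ is Buchsbaum (Theorem \ref{thm: schenzel}); and $h^d(\tau^{<d,*}\mathbf{R}\Gamma_\m R)=0^*_{H^d_\m(R)}$ is a $k$-vector space, so $\m\cdot 0^*_{H^d_\m(R)}=0$. The point is then to chase the Koszul comparison for $\tau^{<d,*}\mathbf{R}\Gamma_\m R\simeq\tau^{\le d}(\mathbf{R}\Gamma_\m(R^+/R)[-1])$ to conclude that the degree-$(d-1)$ Koszul comparison kernel $\fq^{\lim}/\fq\hookrightarrow\fq^+/\fq=\fq^*/\fq$, whose cokernel is $0^*_{H^d_\m(R)}$, extends to an $\m$-annihilated module, i.e. $\m\fq^*\subseteq\fq$ for every $\fq$.

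I expect this last point to be the main obstacle: one must control the extension $0\to\fq^{\lim}/\fq\to\fq^*/\fq\to 0^*_{H^d_\m(R)}\to0$, whose two ends are $\m$-torsion (the first because $R$ is Buchsbaum, so $\m\fq^{\lim}\subseteq\fq$, the second just established) but which need not split, so that bare length/module bookkeeping only yields $\m^2\fq^*\subseteq\fq$. Overcoming it is precisely where the derived-category splitting of $\tau^{<d,*}\mathbf{R}\Gamma_\m R$ — equivalently, the compatibility of the relative Frobenius actions on the modules $0^{*_R}_{H^{d-i}_\m(R/\fq_i)}$ from \cite{PolstraPhamHungQuyNilpotence} — is indispensable, exactly as the non-formality of a general (quasi-Buchsbaum) ring is what makes Schenzel's proof that $\m\fq^{\lim}\subseteq\fq$ in a Buchsbaum ring nontrivial. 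The case $d=1$ is a reassuring warm-up: an unmixed local ring of dimension one is Cohen--Macaulay, so $\fq^{\lim}=\fq$ and $\fq^*/\fq\hookrightarrow 0^*_{H^1_\m(R)}$, and the extension issue disappears; one may try to reduce the general case to lower dimension by passing to $R/x_1R$ for a parameter $x_1$ (which preserves the Buchsbaum property) and feeding the vanishing of the error terms into the exact sequences of Lemma \ref{lem: key lemma}, but the relative-versus-absolute tight closure bookkeeping there is delicate.
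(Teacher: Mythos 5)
Your handling of $(1)\Leftrightarrow(2)$, $(3)\Rightarrow(2)$, and $(4)\Rightarrow(3)$ tracks the paper: the first two are the quoted Proposition \ref{prop: (1) equiv (2) in main} and Theorem \ref{thm: length is independent of sop}, and for $(4)\Rightarrow(3)$ your identification of $\fq^*/\fq=\fq^+/\fq$ with a Koszul cohomology of $R^+/R$, combined with Lemma \ref{lem: alternative description}, is exactly the paper's argument (tensor $K_\bullet(\underline{x},R)$ with the truncation triangle for $\mathbf{R}\Gamma_\m(R^+/R)[-1]$; the part above degree $d$ cannot contribute to $h^0$, so $\fq^*/\fq$ is $h^0$ of a finite free complex tensored with an object of $D(k)$). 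The extension obstacle you flag there is genuinely dissolved by the formality hypothesis $(4)$, so that implication is fine as sketched.

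The gap is in $(2)\Rightarrow(4)$. Your telescoped length computation through Lemma \ref{lem: key lemma} is sound and does yield $\m\cdot 0^{*_R}_{H^{d-i}_\m(R/\fq_i)}=0$ for $i\le d-1$ together with the displayed short exact sequences; but this is only cohomology-level information. Passing from ``every relevant cohomology of $\tau^{<d,*}\mathbf{R}\Gamma_\m R$ is a $k$-vector space, with surjective comparison maps between them'' to ``$\tau^{<d,*}\mathbf{R}\Gamma_\m R$ is quasi-isomorphic to a complex of $k$-vector spaces'' is precisely the quasi-Buchsbaum versus Buchsbaum distinction the paper warns about after Theorem \ref{thm: schenzel}; it is not a formal consequence. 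A Schenzel-type argument needs an actual morphism in $D(R)$ from an object of $D(k)$ that is surjective on all cohomology in the relevant range, and the only candidate you name, $K^\bullet(\underline{x};R^+/R)$, is not such an object: $R^+/R$ is not killed by $\m$ and is not finitely generated, so neither the formality of its Koszul complex nor the surjectivity of $H^j(\underline{x};R^+/R)\to H^j_\m(R^+/R)$ is available. Your own bookkeeping already exhibits the problem one level down: it yields only $\m^2\fq^*\subseteq\fq$, not even $(3)$, let alone the stronger derived statement $(4)$. The paper supplies the missing input by a construction absent from your proposal: after passing to $S=\widehat{R}^\Gamma$ (F-finite, reduced, with tight closure of parameter ideals preserved, via Lemmas \ref{lem: gamma construction reduced} and \ref{lem: gamma construction F-rat}) and fixing a test element $c$ with a uniform test exponent $p^e$ (Theorem \ref{thm: sharp text exponent}), it forms $C=\coker(S\xrightarrow{\cdot F^e_*c}F^e_*S)$, shows $C$ is a finitely generated Buchsbaum $S$-module using hypothesis $(2)$, and then applies Theorem \ref{thm: surjectivity criterion for Buchsbaum} to get a map $\tau^{\le d}(K^\bullet(\n,C)[-1])\to\tau^{<d,*}\mathbf{R}\Gamma_{\n}S$ from a complex of residue-field vector spaces that is surjective on every cohomology; formality follows and descends to $R$ by faithful flatness. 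Some such device is indispensable, and without it your cycle of implications does not close.
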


\begin{remark}
If $\dim(R)=0$, then conditions $(1)$ and $(2)$ above are empty conditions while conditions $(3)$ and $(4)$ are simply saying that the nilradical of $R$ is annihilated by $\m$.
\end{remark}

\begin{proof}[Proof of Theorem \ref{thm: main theorem}]
First of all, $(1)\Leftrightarrow(2)$ follows from Proposition \ref{prop: (1) equiv (2) in main} and $(3)\Rightarrow(2)$ follows from Theorem \ref{thm: length is independent of sop}. We will show that $(2)\Rightarrow (4) \Rightarrow (3)$ below.

\begin{proof}[Proof of $(2)\Rightarrow (4)$] Suppose $\length(\fq^*/\fq)$ is independent of $\fq$, by Theorem \ref{thm: GotoNakamura}, we know that $R_{\fp}$ is F-rational for all primes $\fp\in\Spec(R)\backslash\{\m\}$. In particular, $R_{\fp}$ is regular for all minimal primes $\fp$ of $R$. Since $R$ is unmixed, it follows that $R$ is reduced and equidimensional.

Next we note that by Remark \ref{rmk: length indep implies Buchsbaum}, we know that $R$ is Buchsbaum. Since $R$ is excellent, we know that $\fq^*\widehat{R}=(\fq\widehat{R})^*$ (see \cite[Proposition 1.5]{HunekeSixLecutresCA}) and that $\widehat{R}_{\fp}$ is F-rational for all $\fp\in\Spec(\widehat{R})\backslash\{\m\}$ (see \cite[Theorem 3.1]{VelezOpennessFrational} or \cite[Theorem 7.8]{MaPolstraFsingularitiesBook}). Now we let $\widehat{R}\to \widehat{R}^\Gamma$ be the $\Gamma$-construction with respect to a sufficiently small $\Gamma$ such that
\begin{enumerate}
  \item $\widehat{R}^\Gamma$ is reduced and equidimensional.
  \item $\fq^*\widehat{R}^\Gamma=(\fq\widehat{R}^\Gamma)^*$ for all $\fq\subseteq R$ generated by a system of parameters.
  \item $0^*_{H_\m^d(\widehat{R}^\Gamma)}\cong 0^*_{H_\m^d(\widehat{R})}\otimes_{\widehat{R}}\widehat{R}^\Gamma=0^*_{H_\m^d(R)}\otimes_R\widehat{R}^\Gamma$.
\end{enumerate}
Note that such a choice of $\Gamma$ exists by Lemma \ref{lem: gamma construction reduced} and Lemma \ref{lem: gamma construction F-rat}.

We set $S:=\widehat{R}^\Gamma$ and $\n:=\m S$. Then $(R,\m, k)\to (S,\n, l)$ is a flat local extension such that $S/\m S=l$ is a field. It follows that
$$\tau^{<d}\mathbf{R}\Gamma_{\n}S\cong (\tau^{<d}\mathbf{R}\Gamma_\m R)\otimes_RS$$
is quasi-isomorphic to a complex of $l$-vector spaces as $\tau^{<d}\mathbf{R}\Gamma_\m R$ is quasi-isomorphic to a complex of $k$-vector spaces by Theorem \ref{thm: schenzel}. Thus we know that $S$ is a Buchsbaum ring by Theorem \ref{thm: schenzel} again.

By construction, $S$ is a reduced, equidimensional, F-finite Buchsbaum local ring such that $\fq^*S=(\fq S)^*$ for all $\fq\subseteq R$ generated by a system of parameters. We fix a test element $c\in S^\circ$ and fix $e>0$ such that $p^e$ is a test exponent for all $(c, \fq S)$, such a choice exists by Theorem \ref{thm: sharp text exponent}. It is easy to check that $F^e_*S$ is a (finitely generated) Buchsbaum $S$-module. Consider the following short exact sequence:
\begin{equation}
\label{eqn: short exact seq defining C}
0\to S \xrightarrow{\cdot F^e_*c} F^e_*S \to C\to 0.
\end{equation}
If we tensor the above short exact sequence with $R/\fq$, then we obtain:
\begin{equation}
\label{eqn: LES tight closure}
0\to (\fq S)^*/\fq S \to S/\fq S \xrightarrow{\cdot F^e_*c} F^e_*S/\fq \cdot F^e_*S \to C/\fq C\to 0.
\end{equation}
Here we crucially used the fact that $p^e$ is a test exponent of $(c,\fq)$ to identify the kernel of $S/\fq S \xrightarrow{\cdot F^e_*c} F^e_*S/\fq \cdot F^e_*S$ with $(\fq S)^*/\fq S$. Our key observation is the following.
\begin{claim}
\label{clm: key claim}
$C$ is a Buchsbaum $S$-module.
\end{claim}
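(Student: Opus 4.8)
The claim is that $C = \coker(S \xrightarrow{\cdot F^e_*c} F^e_*S)$ is Buchsbaum. I have at my disposal that both $S$ and $F^e_*S$ are Buchsbaum $S$-modules (the latter because $S$ is F-finite and Buchsbaumness is preserved under the Frobenius pushforward, as remarked in the text), and that $S$ is a reduced equidimensional F-finite Buchsbaum local ring of dimension $d$. The natural strategy is to use the surjectivity criterion for Buchsbaum modules, Theorem \ref{thm: surjectivity criterion for Buchsbaum}: I must show that $H^i(\m, C) \to H^i_\n(C)$ is surjective for every $i < d = \dim C$. (Note $\dim C = d$: since $c$ is a test element, hence a nonzerodivisor on the reduced ring $S$, the map $S \to F^e_*S$ is injective, and $F^e_*S$ has rank $p^{ed} > 1$ over $S$, so $C$ is a nonzero module of dimension $d$.)

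**The key steps.** First I would record the long exact sequence in local cohomology coming from $0 \to S \xrightarrow{\cdot F^e_*c} F^e_*S \to C \to 0$, together with the parallel long exact sequence in Koszul cohomology $H^\bullet(\m, -)$, and the natural map between them. Because $S$ and $F^e_*S$ are Buchsbaum, the maps $H^i(\m, S) \to H^i_\n(S)$ and $H^i(\m, F^e_*S) \to H^i_\n(F^e_*S)$ are surjective for $i < d$. Second, a diagram chase (a version of the five/snake lemma applied to the ladder of the two long exact sequences) should give surjectivity of $H^i(\m, C) \to H^i_\n(C)$ for $i < d-1$ essentially for free, and reduce the case $i = d-1$ to controlling the connecting maps. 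For $i = d-1$ the relevant piece is
$$H^{d-1}_\n(F^e_*S) \to H^{d-1}_\n(C) \to H^d_\n(S) \xrightarrow{\cdot F^e_*c} H^d_\n(F^e_*S),$$
so $H^{d-1}_\n(C)$ surjects onto $\ker(H^d_\n(S) \xrightarrow{\cdot F^e_*c} H^d_\n(F^e_*S))$, and the latter kernel is exactly $0^*_{H^d_\n(S)}$ because $p^e$ is a test exponent for all parameter ideals (one passes to the direct limit description \eqref{eqn: top lc lim closure} of $0^*_{H^d_\n(S)}$). Third, I would run the analogous analysis on Koszul cohomology and then compare: the point is that the image of $H^{d-1}(\m, C)$ in $H^{d-1}_\n(C)$ contains the image of $H^{d-1}(\m, F^e_*S)$ (which is all of $H^{d-1}_\n(F^e_*S)$ by Buchsbaumness of $F^e_*S$), so it suffices to lift the ``new'' part of $H^{d-1}_\n(C)$—i.e. a copy of $0^*_{H^d_\n(S)}$—to $H^{d-1}(\m, C)$. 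Here I expect to use that $\m \cdot 0^*_{H^d_\n(S)} = 0$ would do it, but that is not assumed; instead one uses that $\m$ kills each $H^i_\n(S)$ for $i<d$ (Buchsbaum) together with the cocycle-level identification of $H^{d-1}(\m,C)$ with $\ker(H^{d-1}(\m, C \text{-complex}))$ to produce the required lifts.

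**Main obstacle.** The delicate point is handling the top-degree piece $i = d-1$: surjectivity of $H^{d-1}(\m, C) \to H^{d-1}_\n(C)$ does \emph{not} follow from a formal diagram chase alone, because the connecting map $H^{d-1}_\n(C) \to H^d_\n(S)$ has nonzero image $0^*_{H^d_\n(S)}$, and one has to exhibit actual Koszul cocycles mapping onto this image. The mechanism must be that $p^e$ is a test exponent: this is what forces the kernel of $\cdot F^e_*c$ on $H^d_\n(S)$ to coincide with the tight-closure submodule, and—more importantly—what lets one realize classes in $0^*_{H^d_\n(S)}$ as coming from $H^{d-1}_\n(C)$ via honest cocycles on the \v{C}ech complex that are then shown to be Koszul-liftable using the Buchsbaum property of $S$ and $F^e_*S$ at the finite (Koszul) level. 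I would organize this last part as: (a) by Theorem \ref{thm: surjectivity criterion for Buchsbaum} it is enough to check one generic system of parameters up to a standard reduction (Theorem \ref{thm: finite criteion for Buchsbaum}); (b) express everything in terms of the \v{C}ech/Koszul bicomplex on a fixed minimal generating set of $\m$; (c) chase. Alternatively—and this may be cleaner—one derives the claim on the level of truncated complexes: since both $S$ and $F^e_*S$ have $\tau^{<d}\mathbf{R}\Gamma_\n$ quasi-isomorphic to complexes of $l$-vector spaces (Schenzel, Theorem \ref{thm: schenzel}), the cofiber relating them, shifted appropriately, controls $\tau^{<d}\mathbf{R}\Gamma_\n C$, and one reads off that it too is quasi-isomorphic to a complex of $l$-vector spaces, which by Theorem \ref{thm: schenzel} is exactly Buchsbaumness of $C$.
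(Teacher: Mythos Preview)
Your proposal has a genuine gap: you never invoke the hypothesis that $\ell(\fq^*/\fq)$ is independent of $\fq$ (condition~(2), the standing assumption in this part of the proof), and without it the conclusion fails in general. From $0 \to S \to F^e_*S \to C \to 0$ one obtains (taking $c\in\n$, and using that $F^e_*\n$ kills $H^j_\n(F^e_*S)$ for $j<d$) an exact sequence
\[
0 \;\longrightarrow\; H^{d-1}_\n(F^e_*S) \;\longrightarrow\; H^{d-1}_\n(C) \;\longrightarrow\; 0^*_{H^d_\n(S)} \;\longrightarrow\; 0.
\]
If $\n$ does not annihilate $0^*_{H^d_\n(S)}$ then it does not annihilate $H^{d-1}_\n(C)$, so $C$ cannot be Buchsbaum. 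Thus your surjectivity-criterion chase cannot close at $i=d-1$ using only Buchsbaumness of $S$ and $F^e_*S$; and your alternative via $\tau^{<d}\mathbf{R}\Gamma_\n$ fails for the same reason, since $\tau^{<d}\mathbf{R}\Gamma_\n C$ carries the piece $0^*_{H^d_\n(S)}$ in degree $d-1$, and nothing you have written forces this to be an $l$-vector space. You correctly flag that $\m\cdot 0^*_{H^d_\n(S)}=0$ ``would do it, but that is not assumed''; the missing ingredient is precisely condition~(2), not a cleverer chase.

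The paper's argument is entirely different and short: it uses the \emph{definition} of Buchsbaum rather than either the surjectivity or the Schenzel criterion. Tensoring the defining sequence with $R/\fq$ and using that $p^e$ is a test exponent yields the four-term exact sequence
\[
0 \to (\fq S)^*/\fq S \to S/\fq S \to F^e_*S/\fq F^e_*S \to C/\fq C \to 0,
\]
whence
\[
\ell_S(C/\fq C)-e(\fq S,C)=\bigl(\ell_S(F^e_*S/\fq F^e_*S)-e(\fq S,F^e_*S)\bigr)-\bigl(\ell_S(S/\fq S)-e(\fq S,S)\bigr)+\ell_R(\fq^*/\fq).
\]
The first two terms are constant because $F^e_*S$ and $S$ are Buchsbaum; the last is constant exactly by assumption~(2). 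Hence every parameter ideal of $R$ is standard on $C$, and since generators of $\n=\m S$ can be chosen from $\m$, Theorem~\ref{thm: finite criteion for Buchsbaum} gives that $C$ is Buchsbaum.
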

\begin{proof}[Proof of Claim]
For every $\fq\subseteq R$ generated by a system of parameters, we have
$$e(\fq S, C)=e(\fq S, F^e_*S)- e(\fq S, S),$$
and by (\ref{eqn: LES tight closure}), we know that
$$\ell_S(C/\fq C) = \ell_S(F^e_*S/\fq\cdot F^e_*S) - \ell_S(S/\fq S) + \ell_S((\fq S)^*/\fq S).$$
Since $S$ and $F^e_*S$ are both Buchsbaum $S$-modules and
$$\ell_S((\fq S)^*/\fq S)=\ell_S(\fq^* S/\fq S)=\length_R(\fq^*/\fq)$$
is independent of the choice of $\fq$ by assumption. It follows that
\small
$$\ell_S(C/\fq C)-e(\fq S, C)=\left(\ell_S(F^e_*S/\fq\cdot F^e_*S) - e(\fq S, F^e_*S)\right) -\left(\ell_S(S/\fq S) -e(\fq S, S)\right) + \ell_S((\fq S)^*/\fq S) $$
\normalsize
is independent of $\fq$. This implies that every such $\fq$ is standard on $C$ (more precisely, every system of parameters of $R$ is a standard system of parameters of $C$). But since $\m S=\n$, we can fix a generating set of $\n$ that are elements in $\m$. By Theorem \ref{thm: finite criteion for Buchsbaum}, $C$ is a Buchsbaum $S$-module.
\end{proof}

At this point, if we compare (\ref{eqn: LES tight closure}) with the long exact sequence of Koszul homology with respect to $\fq=(x_1,\dots,x_d)$ induced by (\ref{eqn: short exact seq defining C}), we obtain a surjection
$$H_1(x_1,\dots,x_d; C)\twoheadrightarrow (\fq S)^*/\fq S \cong \fq^* S/\fq S.$$
It is well-known that, as $C$ is Buchsbaum, $H_1(x_1,\dots,x_d; C)$ is annihilated by $\n$ (for example see \cite[Theorem 2.3]{SchenzelApplicationsofDualizingComplex}). It follows that $\fq^* S/\fq S$ is annihilated by $\n$ and thus $\fq^*/\fq$ is annihilated by $\m$. This establishes $(2)\Rightarrow(3)$. We will not use this in the sequel though.

Now we return to the proof of $(2)\Rightarrow(4)$. Consider the exact triangle:
$$\mathbf{R}\Gamma_{\n} C[-1]\to \mathbf{R}\Gamma_{\n} S \xrightarrow{\cdot F^e_*c} \mathbf{R}\Gamma_{\n} F^e_*S\xrightarrow{+1} .$$
Since $F^e_*S$ is Buchsbaum, $H_{\n}^j(F^e_*S)$ is annihilated by $F^e_*\n$ for all $j<d$ (see Theorem \ref{thm: schenzel}), thus the long exact sequence of cohomology induced by the above triangle splits as
\begin{equation}
\label{eqn: split surj lower LC}
0\to H_{\n}^{j-1}(F^e_*S) \to h^j(\mathbf{R}\Gamma_{\n} C[-1])\to H_{\n}^j(S)\to 0
\end{equation}
for all $j<d$, and
$$
0\to H_{\n}^{d-1}(F^e_*S) \to h^d(\mathbf{R}\Gamma_{\n} C[-1])\to H_{\n}^d(S)\xrightarrow{\cdot F^e_*c} H_{\n}^d(F^e_*S).
$$
Since $(\fq S)^*/(\fq S)\cong \ker(S/\fq S \xrightarrow{\cdot F^e_*c} F^e_*S/\fq\cdot F^e_*S)$ all $\fq$ as in (\ref{eqn: LES tight closure}), by taking a direct limit for all $\fq^{[p^e]}$ we know that (see (\ref{eqn: top lc})):
$$\ker(H_{\n}^d(S)\xrightarrow{\cdot F^e_*c} H_{\n}^d(F^e_*S))=0^*_{H_{\n}^d(S)}.$$
Thus we have
\begin{equation}
\label{eqn: split surj top LC}
0\to H_{\n}^{d-1}(F^e_*S) \to h^d(\mathbf{R}\Gamma_{\n} C[-1])\to 0^*_{H_{\n}^d(S)}\to 0.
\end{equation}
Now by the definition of ${}^*$-truncation, we have the following diagram:
\[
\xymatrix{
\tau^{<d,*}\mathbf{R}\Gamma_{\n}S \ar[r] & \mathbf{R}\Gamma_{\n} S \cong \tau^{\leq d}\mathbf{R}\Gamma_{\n} S \ar[r] & H_{\n}^d(S)/0^*_{H_{\n}^d(S)}[-d] \ar[r]^-{+1} &  \\
& \tau^{\leq d} (\mathbf{R}\Gamma_{\n} C[-1]) \ar[u] \ar@{.>}[ur]^-0&
}
\]
where the dotted arrow is the zero map in $D(S)$: this is because the image of $h^d(\mathbf{R}\Gamma_{\n} C[-1])$ is zero in $H_{\n}^d(S)/0^*_{H_{\n}^d(S)}$, but $\tau^{\leq d} (\mathbf{R}\Gamma_{\n} C[-1])$ lives in cohomology degree $\leq d$ while $H_{\n}^d(S)/0^*_{H_{\n}^d(S)}[-d]$ only lives in cohomology degree $d$. It follows that there is an induced map in $D(S)$:
$$\tau^{\leq d} (\mathbf{R}\Gamma_{\n} C[-1]) \to \tau^{<d,*}\mathbf{R}\Gamma_{\n}S.$$
Moreover, the induced maps on cohomology
$$h^j(\tau^{\leq d} (\mathbf{R}\Gamma_{\n} C[-1]))\cong h^j(\mathbf{R}\Gamma_{\n} C[-1])\twoheadrightarrow H_{\n}^j(S), \text{ where $j<d$, and }$$
$$h^d(\tau^{\leq d} (\mathbf{R}\Gamma_{\n} C[-1]))\cong h^d(\mathbf{R}\Gamma_{\n} C[-1])\twoheadrightarrow 0^*_{H_{\n}^d(S)}$$
are all surjective by (\ref{eqn: split surj lower LC}) and (\ref{eqn: split surj top LC}).

We now invoke Theorem \ref{thm: surjectivity criterion for Buchsbaum}. Since $C$ is a Buchsbaum $S$-module by Claim \ref{clm: key claim}, Theorem \ref{thm: surjectivity criterion for Buchsbaum} tells us that
$$H^j({\n}, C)\twoheadrightarrow H_{\n}^j(C)$$ is surjective for all $j\leq d-1$. That is, if we consider the natural map
$$K^\bullet({\n}, C)[-1]\to \mathbf{R}\Gamma_{\n} C[-1],$$
where $K^\bullet({\n}, C)$ denote the cohomological Koszul complex on a generating set of $\n$, then the induced map on cohomology
$$h^j(K^\bullet({\n}, C)[-1]) \to h^j(\mathbf{R}\Gamma_{\n} C[-1])$$
is surjective for all $j\leq d$.

Putting all these together, if we consider the induced composition map
$$\tau^{\leq d}(K^\bullet({\n}, C)[-1]) \to \tau^{\leq d} (\mathbf{R}\Gamma_{\n} C[-1]) \to \tau^{<d,*}\mathbf{R}\Gamma_{\n}S,$$
then the induced map on each cohomology is surjective. But $K^\bullet({\n}, C)$ is quasi-isomorphic to a complex of $l$-vector spaces: for example one can choose a regular local ring $(A,\n, l)$ that surjects onto $S$ and note that $K^\bullet({\n}, C)\cong \mathbf{R}\Hom_A(l, C)$, the latter is obviously a complex of $l$-vector spaces. Now we have $\tau^{\leq d}(K^\bullet({\n}, C)[-1])$ is quasi-isomorphic to a complex of $l$-vector spaces, and it follows from \cite[II, Proposition 4.3]{StuckradVogelBuchsbaumRingsBook} (or use the dual statement of \cite[Lemma 2.4]{BhattMaSchwedeDualizingComplexFinjectiveDB}) that $\tau^{<d,*}\mathbf{R}\Gamma_{\n}S$ is quasi-isomorphic to a complex of $l$-vector spaces. In particular, it is quasi-isomorphic to a complex of $k$-vector spaces.

Finally, since $0^*_{H_{\n}^d(S)}\cong 0^*_{H_\m^d(R)}\otimes_RS$, it is easy to see that
$$\tau^{<d, *}\mathbf{R}\Gamma_{\n}S\cong (\tau^{<d, *}\mathbf{R}\Gamma_\m R)\otimes_RS.$$
Since $(R,\m, k)\to (S, \n, l)$ is faithfully flat, the natural map
$$\tau^{<d, *}\mathbf{R}\Gamma_\m R\to (\tau^{<d, *}\mathbf{R}\Gamma_\m R)\otimes_RS \cong \tau^{<d, *}\mathbf{R}\Gamma_{\n}S$$
induces an injection on each cohomology. As $\tau^{<d, *}\mathbf{R}\Gamma_{\n}S$ is quasi-isomorphic to a complex of $k$-vector spaces, it follows from \cite[Lemma 2.4]{BhattMaSchwedeDualizingComplexFinjectiveDB} that $\tau^{<d, *}\mathbf{R}\Gamma_\m R$ is quasi-isomorphic to a complex of $k$-vector spaces. This completes the proof of $(2)\Rightarrow(4)$.
\end{proof}

\begin{proof}[Proof of $(4)\Rightarrow(3)$]First of all, since $\tau^{<d,*}\mathbf{R}\Gamma_\m R$ is quasi-isomorphic to a complex of $k$-vector spaces, $H_\m^i(R)$ and $0^*_{H_\m^d(R)}$ are $k$-vector spaces for all $i<d$ (in particular, they have finite length). Thus by Theorem \ref{thm: GotoNakamura}, we know that $R_{\fp}$ is F-rational (and hence regular) for all minimal primes $\fp$ of $R$. Since $R$ is unmixed, it follows that $R$ is reduced and equidimensional.

Now let $R^+$ be the absolute integral closure of $R$ and let $\underline{x}:=x_1,\dots,x_d$ be any system of parameters. The short exact sequence
$$0\to R\to R^+\to R^+/R\to 0$$
induces a diagram (where $K_\bullet(\underline{x}, R)$ denotes the usual homological Koszul complex):
\[\xymatrix{
K_\bullet(\underline{x}, R) \otimes \mathbf{R}\Gamma_\m(R^+/R)[-1] \ar[r] \ar[d]^\cong & K_\bullet(\underline{x}, R) \otimes \mathbf{R}\Gamma_\m R  \ar[r] \ar[d]^\cong &  K_\bullet(\underline{x}, R) \otimes \mathbf{R}\Gamma_\m R^+ \ar[d]^\cong  \ar[r]^-{+1} & \\
K_\bullet(\underline{x}, R^+/R)[-1] \ar[r] &   K_\bullet(\underline{x}, R)  \ar[r] &  K_\bullet(\underline{x}, R^+) \ar[r]^-{+1} \ar[r]^-{+1} &
}.
\]
Taking the cohomology of the second line, we obtain (set $\fq=(x_1,\dots,x_d)$):
$$0=h^{-1}K_\bullet(\underline{x}, R^+)\to h^0(K_\bullet(\underline{x}, R^+/R)[-1]) \to R/\fq \to R^+/\fq R^+$$
where the leftmost $0$ follows from the fact that $\underline{x}=x_1,\dots,x_d$ is a regular sequence on $R^+$ by Theorem \ref{thm: HochsterHunekebigCM}. It follows that
\begin{equation}
\label{eqn: h^0 is tight closure}
h^0(K_\bullet(\underline{x}, R) \otimes \mathbf{R}\Gamma_\m(R^+/R)[-1])\cong h^0(K_\bullet(\underline{x}, R^+/R)[-1])\cong \fq^+/\fq \cong \fq^*/\fq
\end{equation}
where the last isomorphism follows from Theorem \ref{thm: SmithPlusClosure}.

Next, we note that the exact triangle
$$\tau^{\leq d}(\mathbf{R}\Gamma_\m(R^+/R)[-1]) \to \mathbf{R}\Gamma_\m(R^+/R)[-1] \to \tau^{>d}(\mathbf{R}\Gamma_\m(R^+/R)[-1]) \xrightarrow{+1}$$
induces an exact triangle
\begin{eqnarray}
\label{eqn: exact triangle}
K_\bullet(\underline{x}, R) \otimes \tau^{\leq d}(\mathbf{R}\Gamma_\m(R^+/R)[-1]) &\to& K_\bullet(\underline{x}, R) \otimes \mathbf{R}\Gamma_\m(R^+/R)[-1] \\
&\to& K_\bullet(\underline{x}, R) \otimes \tau^{>d}(\mathbf{R}\Gamma_\m(R^+/R)[-1]) \xrightarrow{+1}. \notag
\end{eqnarray}
Since $\tau^{>d}(\mathbf{R}\Gamma_\m(R^+/R)[-1])$ lives in cohomology degree $>d$ and $K_\bullet(\underline{x}, R)$ is a complex of finite free $R$-modules whose terms sit in cohomology degree $[-d, 0]$, we have
$$h^0\left(K_\bullet(\underline{x}, R) \otimes \tau^{>d}(\mathbf{R}\Gamma_\m(R^+/R)[-1])\right)=h^{-1}\left(K_\bullet(\underline{x}, R) \otimes \tau^{>d}(\mathbf{R}\Gamma_\m(R^+/R)[-1])\right)=0.$$
Thus taking the cohomology of (\ref{eqn: exact triangle}), we obtain that
$$h^0\left(K_\bullet(\underline{x}, R) \otimes \tau^{\leq d}(\mathbf{R}\Gamma_\m(R^+/R)[-1])\right)\cong h^0\left(K_\bullet(\underline{x}, R) \otimes \mathbf{R}\Gamma_\m(R^+/R)[-1]\right).$$
Combining this with (\ref{eqn: h^0 is tight closure}) we have
$$\fq^*/\fq\cong h^0\left(K_\bullet(\underline{x}, R) \otimes \tau^{\leq d}(\mathbf{R}\Gamma_\m(R^+/R)[-1])\right).$$
Finally, by Lemma \ref{lem: alternative description}, $\tau^{\leq d}(\mathbf{R}\Gamma_\m(R^+/R)[-1])\cong \tau^{<d,*}\mathbf{R}\Gamma_\m R$ is quasi-isomorphic to a complex of $k$-vector spaces. It follows that
$$h^0\left(K_\bullet(\underline{x}, R) \otimes \tau^{\leq d}(\mathbf{R}\Gamma_\m(R^+/R)[-1])\right)$$ is a $k$-vector space. Therefore $\fq^*/\fq$ is annihilated by $\m$ for every $\fq$ generated by a system of parameters, i.e., $\tau_{\pa}(R)$ contains $\m$.
\end{proof}
\end{proof}

\begin{remark}
\label{rmk: tau radical F-inj}
Suppose $(R,\m)$ is an equidimensional excellent local ring such that $R_{\fp}$ is F-rational for all $\fp\in\Spec(R)\backslash\{\m\}$. It is well-known that under these assumptions, the parameter test ideal $\tau_{\pa}(R)$ is $\m$-primary (for example, see \cite[Theorem 6.8]{KatzmanMurruVelezZhangGlobalParmeterTestIdeal}). If, additionally, $R$ is F-injective, then we claim that $\tau_{\pa}(R)$ contains $\m$. It is enough to show that $\tau_{\pa}(R)$ is a radical ideal, but if $r^{p^e}\in \tau_{\pa}(R)$ then for every ideal $\fq$ generated by a system of parameters, we have $$(r\fq^*)^{[p^e]}=r^{p^e}(\fq^*)^{[p^e]}\subseteq r^{p^e}(\fq^{[p^e]})^*\subseteq \fq^{[p^e]},$$
and it follows from \cite[Theorem 1.1]{MaFinjectiveBuchsbaum} that $r\fq^*\subseteq \fq$ and hence $r\in \tau_{\pa}(R)$ as wanted. Thus, Theorem \ref{thm: main theorem} $(3)\Rightarrow(4)$ should be viewed as a generalization of Theorem \ref{thm: tight closure truncation}, and Theorem \ref{thm: main theorem} $(3)\Rightarrow(2)$ should be viewed as a generalization of \cite[Main Theorem]{PhamHungQuyTightClosureParameterIdeal}.
\end{remark}

Lastly, we point out that even in dimension one, the unmixed assumption is necessary in Theorem \ref{thm: main theorem} (at least for $(2)\Rightarrow(3)$), as the following example shows.

\begin{example}
Let $k$ be a field of characteristic $p>0$ and let $(R,\m)=k[[x,y]]/(xy) \cap (x,y)^3$.
Then $R$ is a one-dimensional excellent local ring. Note that $R$ is not unmixed: $\m=(x,y)$ is an associated prime of $R$. We claim that
$\length(\fq^*/\fq)=2$ for every $\fq=(z)\nsubseteq (x)\cup (y)$ but $\tau_{\pa}(R)\nsupseteq\m$. To see this, first note that $$H_\m^0(R)=k\cdot \langle xy \rangle=(xy)\subseteq R$$ and that $$\overline{R}:= R/H_\m^0(R)= R/(xy)\cong k[[x,y]]/(xy). $$
It follows that $R$ is Buchsbaum (as $H_\m^0(R)$ is annihilated by $\m$), and since $H_\m^1(R)\cong H_\m^1(\overline{R})$, we have $$0^*_{H_\m^1(R)}\cong 0^*_{H_\m^1(\overline{R})}= k\cdot \langle \frac{1}{x+y}\rangle.$$
Now for each $\fq=(z)\nsubseteq (x)\cup (y)$, since $R$ is Buchsbaum, we know by Lemma \ref{lem: length of q^lim/q} that $$\length(\fq^{\lim}/\fq)=\length(H_\m^0(R))=1.$$
By Remark \ref{rmk: lim closure tight closure} we know that $\fq^*/\fq^{\lim}\hookrightarrow 0^*_{H_\m^1(R)}$. But $\fq^{\lim}\overline{R}=\fq\overline{R}$ since $\overline{R}$ is Cohen-Macaulay while $\fq^*\overline{R}=(\fq\overline{R})^*\neq \fq\overline{R}$ as $\overline{R}$ is not F-rational (where the first equality follows from \cite[Proposition 4.1 (j)]{HochsterHuneke1}). It follows that $\fq^*\neq \fq^{\lim}$ and thus $\fq^*/\fq^{\lim}\cong 0^*_{H_\m^1(R)}$ since $\length(0^*_{H_\m^1(R)})=1$. Therefore we have
$$\length(\fq^*/\fq)=\length(\fq^*/\fq^{\lim})+\length(\fq^{\lim}/\fq)=2.$$
Finally, note that $x+y$ is a minimal reduction of $\m=(x,y)$, and since tight closure agrees with integral closure for principal ideals (see \cite[Corollary 5.8]{HochsterHuneke1}), we have $(x+y)^*=\overline{(x+y)}=\m$. Since $\m^2\nsubseteq (x+y)$ by a simple computation, we have $\tau_{\pa}(R)\nsupseteq\m$.
\end{example}

\bibliographystyle{plain}
\bibliography{refs}

\end{document}